\g@addto@macro\bfseries{\boldmath}
\def\subsubsection{\@startsection{subsubsection}{3}%
  \z@{.5\linespacing\@plus.7\linespacing}{-.5em}% 
  {\normalfont\bf}}
\newtheorem{thm}{Theorem}[section]
\newtheorem{lem}[thm]{Lemma}
\newtheorem{lemma}[thm]{Lemma}
\newtheorem{cor}[thm]{Corollary}
\newtheorem{prop}[thm]{Proposition}
\newtheorem{defn}[thm]{Definition}
\newtheorem{rem}[thm]{Remark}
\def\vth{\vartheta}
\def\th{\theta}
\def\psiF{\psi}
\def\phiF{\phi}
\def\omegaF{\omega}
\def\R{\mathbb{R}}
\def\C{{\mathbb C}}
\def\2q{{\frac{2}{|B|}}}
\newcommand{\N}{\mathbb{N}}
\newcommand{\bmo}{\mathrm{bmo}}
\newcommand{\BMO}{\mathrm{BMO}}
\newcommand{\Z}{\mathbb{Z}}
\newcommand{\MM}{{\mathcal{M}}} % M from Ruzhansky-Sugimoto
\newcommand{\NN}{{\mathcal{N}}} % N from Ruzhansky-Sugimoto
\newcommand{\NNN}{{\mathscr{N}}} % generic N
\newcommand{\RR}{{\mathscr{R}}}
\newcommand{\supp}{\mbox{supp}\,}
\renewcommand{\leq}{\leqslant}
\renewcommand{\geq}{\geqslant}
\def\bra#1{{\langle{#1}\rangle}}
\newcommand{\phase}{\varphi}
\newcommand{\Phase}{\Phi}
\newcommand{\Mod}[1]{\ (\mathrm{mod}\ #1)}
\newcommand{\m}[1]{\begin{equation*}
#1
\end{equation*}}
\newcommand{\nm}[2]{\begin{equation}\label{#1}
#2
\end{equation}}
\newcommand{\ma}[1]{\begin{align*}
#1
\end{align*}}
\newcommand{\nma}[2]{\begin{equation}\label{#1}
\begin{aligned}
#2
\end{aligned}
\end{equation}}
\newcommand{\abs}[1]{\left|#1\right|}
\newcommand{\set}[1]{\left\{#1\right\}}
\newcommand{\brkt}[1]{\left(#1\right)}
\newcommand{\dd}{\,\mathrm{d}}
\newcommand{\dddd}{\mathrm{d}}
\newcommand{\ddd}{\,\text{\rm{\mbox{\dj}}}}
\renewcommand{\d}{\partial}
\newcommand{\norm}[1]{\left\Vert#1\right\Vert}
\begin{document}

\title[Boundedness of multilinear Fourier integral operators] {Global boundedness of multilinear Fourier integral operators}
\author[S.~Rodr\'iguez-L\'opez]{Salvador Rodr\'iguez-L\'opez}
\address{Department of Mathematics, Stockholm University, SE-106 91 Stockholm, Sweden}
\email{s.rodriguez-lopez@math.su.se}
\author[D.~Rule]{David Rule}
\address{Mathematics Institute, Link\"oping University, SE-581 83 Link\"oping, Sweden}
\email{david.rule@liu.se}
\author[W.~Staubach]{Wolfgang Staubach}
\address{Department of Mathematics, Uppsala University, SE-751 06 Uppsala, Sweden}
\email{wulf@math.uu.se}
\date{\today}
\thanks{The first author has been partially supported by the Grant MTM2010-14946. The third author is partially supported by a grant from the Holger and Anna Crafoord foundation}

\subjclass[2000]{35S30, 42B20, 30H10, 30H35, 42B25, 42B35.}
\keywords{Multilinear Fourier integral operators, Local Hardy spaces, Local BMO,  Frequency space localization, Space-time resonances}
\begin{abstract}
We establish global regularity of multilinear Fourier integral operators that are associated to nonlinear wave equations on product of $L^p$ spaces by proving endpoint boundedness on suitable products spaces containing combinations of the local Hardy space, the local BMO and the $L^2$ spaces. 
\end{abstract}

\maketitle

%%%%%%%%%%%%%%%%%%%%%%%%%%%%%%%%%%%%%%%%%%
%%%%%%%%%%%%%%%%%%%%%%%%%%%%%%%%%%%%%%%%%%
%%%%%%%%%%%%%%%%%%%%%%%%%%%%%%%%%%%%%%%%%%
\section{Introduction}\label{sec:intro}

This paper deals with the global boundedness of a class of multilinear Fourier integral operators that appear frequently in connection to nonlinear wave equations. To illustrate this fix a
smooth, compactly supported multilinear symbol $m$ on $\R^n$.
Let $T_m$ denote the multilinear paraproduct
\begin{equation}
T_m( f_1, \dots,f_N)(x):=\int_{\R^{nN}}  m(\Xi)\prod_{j=1}^N\left(\widehat{f}_j(\xi_j)e^{ix\cdot\xi_j}\right) \dd\Xi,
\end{equation}
where $\xi_j\in\R^n$ ($j=1,\dots,N$) and $\Xi=(\xi_1,\dots,\xi_N)\in \R^{nN}$.

Furthermore, let
\[
    \sqrt{-\Delta}\, f(x)= \int_{\R^n} |\xi| \,\widehat{f}(\xi)\,e^{ix\cdot\xi}\, \ddd\xi,
\]
where $\ddd \xi$ denotes the normalised Lebesgue measure ${\dd \xi}/{(2\pi)^n}$.
Consider now the wave equations
\begin{equation}
\label{eq:dispersive}
\left\{ \begin{array}{l} i\partial_t u +  \sqrt{-\Delta}\, u = T_{m}\left( v_1,\dots, v_N\right)  \\
i\partial_t v_k +  \sqrt{-\Delta}\, v_k = 0,\,\,\, k=1,\dots, N \\
\end{array} \right.
\quad \mbox{with} \quad
\left\{ \begin{array}{l} u(0,x) = 0  \\ v_k(0,x) = f_k (x), \,\,\, k=1,\dots, N. \end{array} \right.
\end{equation}
The functions $u$ and $v_k$ are complex valued, and each $f_k$ maps $\mathbb{R}^n$ to $\mathbb{C}$. The above system is used in order to study the nonlinear interaction of free waves, as a first step towards
understanding a nonlinear wave equation $i\partial_t u + \sqrt{-\Delta}\, u=F(u)$, with a suitable non-linearity. The main question here is, given $f_k$ in some function spaces, how does $u$ behave in some other suitable function space? In order to answer this question, one uses the Duhamel formula to represent the solution $u$ as
\begin{equation}\label{representation of the solution}
u(t,x)= \int_{0}^{t}\int_{\R^{nN} } m(\Xi)\, \prod_{j=1}^N \left(\widehat{f}_j(\xi_j)\, e^{ix\cdot\xi_j+is |\xi_j|}\right)\,e^{i(t-s)| \xi_1+\cdots+\xi_N|}  \dd\Xi\, \dd s.
\end{equation}
where 
\m{
\widehat{f}(\xi)=\int_{\R^n} f(x)e^{-i x.\xi}\, \dd x
}
is the Fourier transform of $f$. The inner integral in \eqref{representation of the solution} is precisely of the form of the operators whose boundedness are studied in this paper. This is of course along the lines of the far-reaching method of \textit{space-time resonances} which was introduced by P.~Germain, N.~Masmoudi and J.~Shatah and was explored and applied to nonlinear partial differential equations by them in \cite{germmasshat1}, \cite{germmasshat2}, \cite{germmasshat3} and also by F.~Bernicot and P.  Germain in \cite{BernicotGermain1}, \cite{BernicotGermain2}, \cite{BernicotGermain3}. In our case, we are ignoring the effect of the integral in $s$ which amounts to ignoring the effects of the time resonance.

Motivated by \eqref{representation of the solution}, we study \emph{multilinear Fourier integral operators} (abbreviated multilinear FIOs) of the form
\nm{eq:foidef}{
T^\Phase_\sigma(f_1,\dots,f_N)(x) = \int_{\R^{nN}} \sigma(x,\Xi)\prod_{j=1}^N\left(\widehat{f}_j(\xi_j)e^{ix\cdot\xi_j}\right)\, e^{i\Phase(\Xi)}\, \ddd\Xi,
}
where $\sigma$ is an \emph{amplitude} and
\nm{eq:phaseform}{
\Phase(\Xi) = \phase_0(\xi_1+\dots+\xi_N) + \sum_{j=1}^N\phase_j(\xi_j),
}
is a combination of \emph{phase functions} $\phase_j$ ($j=0,1,\dots,N$). Here the terms amplitude and phase function are defined as follows:\\ 
\begin{defn}\label{defn:amplitudes}
For integers $n,N\geq 1$ and $m \in \R$, the set of \emph{(multilinear) amplitudes}  $S^m(n,N)$ is the set of functions $\sigma \in \mathcal{C}^\infty (\R^n \times \R^{nN})$ that satisfy
\m{
|\partial_{\Xi}^{\alpha}\partial_{x}^{\beta} \sigma(x,\Xi)|\leq C_{\alpha,\beta} \langle \Xi\rangle^{m-|\alpha|},
}
for all multi-indices $\alpha$ and $\beta$. Here and in what follows
\m{
\langle \Xi\rangle = \left(1+\sum_{j=1}^N |\xi_j|^2\right)^{1/2} \quad \mbox{for\, $\Xi = (\xi_1,\dots,\xi_N) \in \R^{nN}$ with $\xi_j \in \R^n$, \,j=\emph{1},\dots, N.}
}
The parameter $m$ is referred to as the \emph{order} or \emph{decay} of the amplitude.
\end{defn}

\begin{defn}\label{defn:linear phase}
A function $\phase\colon \R^n \to \R$ which belongs to $\mathcal{C}^{\infty}(\mathbb{R}^n\setminus \{0\})$ and is positively homogeneous of degree one \emph{(that is satisfies $\varphi (t\xi)= t \varphi(\xi)$ for all $\xi\in \R^n$ and all $t>0$)} is called a \emph{phase function} \emph{(or phase)}.
\end{defn}

In order to state the main result of this paper, i.e. Theorem \ref{thm:main} below, we define
\begin{equation} \label{defn:xp}
    X^p:=\begin{cases}
        h^p & \mbox{if $p\leq 1$}\\
        L^p & \mbox{if $1<p<\infty$}\\
        \bmo & \mbox{if $p=\infty$},
    \end{cases}
\end{equation}
where $L^p$ is the usual Lebesgue space, $h^p$ is the local Hardy space defined in Definition \ref{def:Triebel} below, and $\bmo$ is the dual space of $h^1$. We remind the reader that $L^p$ and $h^p$ coincide when $1 < p < \infty$.

\begin{thm}\label{thm:main}
Given integers $n, N \geq2$ and exponents $p_j \in [1,\infty]$ \emph{($j=0,\dots, N$)} which satisfy
\begin{equation}\label{eq:hoelder}
    \frac{1}{p_0}=\sum_{j=1}^N\frac{1}{p_j},
\end{equation}
suppose that
\begin{equation}\label{ineq:criticalexponent}
	m\leq -(n-1)\brkt{\sum_{j=1}^N\abs{\frac{1}{p_j}-\frac{1}{2}}+\abs{\frac{1}{p_0}-\frac{1}{2}}}.
\end{equation}
Then if $\sigma \in S^{m}(n,N)$ and $\Phi$ is of the form \eqref{eq:phaseform} with each phase $\phase_j$ being as in \emph{Definition \ref{defn:linear phase} ($j=0,1,\dots,N$)}, then the multilinear operator $T^\Phi_\sigma$ initially defined by \eqref{eq:foidef} for $f_1,\dots,f_N \in \mathscr{S}$ $($the Schwartz class$)$, extends to a bounded multilinear operator from $X^{p_1}\times \ldots \times X^{p_N}$ to $X^{p_0}$.
\end{thm}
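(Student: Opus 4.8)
The plan is to combine a careful frequency decomposition with the linear theory of Fourier integral operators on $h^p$ and the multilinear Calderón--Zygmund theory, after a sequence of standard reductions.

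\emph{Reductions.} As a function of $(1/p_0,\dots,1/p_N)$, the right-hand side of \eqref{ineq:criticalexponent} is convex and piecewise affine, with breakpoints only where some $1/p_j=1/2$. Inserting into $\sigma$ an analytic family of factors $\langle\Xi\rangle^{z}$ and invoking Stein's multilinear complex interpolation theorem, it therefore suffices to treat the finitely many configurations in which each $p_j\in\{1,2,\infty\}$ (compatible with \eqref{eq:hoelder}): every admissible tuple is a convex combination of these in the $(1/p_j)$-coordinates, and convexity forces the order hypothesis to descend to the interpolant. One may further assume equality holds in \eqref{ineq:criticalexponent} (a smaller $m$ only improves the amplitude estimates), and one may split off the part of $\sigma$ supported in $\langle\Xi\rangle\lesssim1$: the corresponding operator has a smooth, compactly supported effective amplitude, and — since on that region $e^{i\varphi_j(\xi_j)}$ differs from $1$ by a Lipschitz function vanishing at the origin, whose inverse Fourier transform turns out to be integrable — it is bounded on all the relevant products by an essentially Coifman--Meyer argument. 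Thus we may take $\sigma$ supported in $\langle\Xi\rangle\gtrsim1$ (and, if convenient, $x$-independent, by the usual Fourier-expansion-in-$x$ device).

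\emph{Decomposition and factorisation.} Fix a Littlewood--Paley decomposition $\sigma=\sum_{k\geq1}\sigma_k$ with $\supp\sigma_k\subset\{\langle\Xi\rangle\sim2^k\}$, and split each $\sigma_k$ into pieces $\sigma_{k,\ell}$ with $|\xi_j|\sim2^{\ell_j}$, where $1\leq\ell_j\lesssim k$ and $\max_j\ell_j\sim k$ (pieces with some $|\xi_j|\lesssim1$ being folded into the low-frequency analysis above). On each such piece one has the exact identity
\[
T^\Phi_{\sigma_{k,\ell}}(f_1,\dots,f_N)=\widetilde T_{\sigma_{k,\ell}}\bigl(A_1^{\ell_1}f_1,\dots,A_N^{\ell_N}f_N\bigr),
\]
where $A_j^{\ell_j}$ is the linear Fourier integral operator with phase $\varphi_j$ and a smooth frequency cutoff to $|\xi|\sim2^{\ell_j}$ (chosen $\equiv1$ on the $\xi_j$-support of $\sigma_{k,\ell}$), and $\widetilde T_{\sigma_{k,\ell}}$ is the multilinear operator with amplitude $\sigma_{k,\ell}$ and the reduced phase $\varphi_0(\xi_1+\dots+\xi_N)$. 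For $\widetilde T_{\sigma_{k,\ell}}$ one then performs a Littlewood--Paley-plus-second-dyadic (Seeger--Sogge--Stein) decomposition in the total frequency $\eta:=\xi_1+\dots+\xi_N$, restricting $\eta$ to $|\eta|\sim2^{k'}$ with $k'\lesssim k$ and to a cone of aperture $\sim2^{-k'/2}$ about a unit vector $e_{\nu_0}$; there $\varphi_0(\eta)=\nabla\varphi_0(e_{\nu_0})\cdot\eta+O(1)$, so, up to absorbing a bounded smooth factor into the amplitude and translating the output by $\nabla\varphi_0(e_{\nu_0})$, the piece becomes a multilinear Fourier multiplier (a multilinear pseudodifferential operator if the $x$-dependence is retained) whose symbol is supported in a single anisotropic $\eta$-slab. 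In effect the oscillations $\varphi_1,\dots,\varphi_N$ have been converted into linear FIOs on the inputs, the oscillation $\varphi_0$ into a linear FIO on the output, and the multilinear coupling confined to a non-oscillatory amplitude.

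\emph{Estimates and summation.} Two ingredients remain. First, the linear bound: $A_j^{\ell_j}\colon X^{p_j}\to X^{p_j}$ with norm $\lesssim2^{\ell_j(n-1)|1/p_j-1/2|}$ for a single frequency block (Seeger--Sogge--Stein, duality and interpolation), the sum over $\ell_j$ being controlled by Littlewood--Paley / vector-valued orthogonality so that no power is overshot. Second, after a Seeger--Sogge--Stein rescaling of the $\eta$-slab to unit scale, each model piece of $\widetilde T_{\sigma_{k,\ell}}$ is, uniformly in $k'$ and $\nu_0$, $2^{km}$ times a normalised multilinear pseudodifferential operator of Coifman--Meyer type, hence bounded $X^{p_1}\times\dots\times X^{p_N}\to X^{p_0}$ by the multilinear Calderón--Zygmund theory (Coifman--Meyer, Grafakos--Torres, and its Hardy/BMO refinements), while the anisotropic rescaling together with the almost-orthogonal reassembly over the cones $\nu_0$ costs exactly $2^{k'(n-1)|1/p_0-1/2|}$ — the multilinear analogue of the linear FIO estimate. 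Multiplying these bounds and using $m=-(n-1)\bigl(\sum_{j=1}^N|1/p_j-1/2|+|1/p_0-1/2|\bigr)$ with $\ell_j,k'\leq k$, each single piece is bounded by a constant, with a genuine gain off the diagonal; the residual sums over $k$, $k'$, $\ell$ are then handled by inserting a Littlewood--Paley decomposition on the output and running a Schur / Cotlar--Stein almost-orthogonality argument indexed by the output frequency. For the configurations with $\bmo$ inputs the corresponding $L^2$ bound is replaced by a duality argument against $h^1$-atoms, and for the $h^1$-valued configuration ($p_0=1$) one checks that the rescaled model operators map $L^2$-normalised atoms in the non-$\bmo$ slots to $h^1$-molecules.

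\emph{Main obstacle.} The heart of the matter is making the summation lossless at the non-$L^2$ endpoints. Because the operator is genuinely multilinear, the second-dyadic decomposition of $\varphi_0$ couples all the input frequencies through the single amplitude $\sigma_{k,\ell,k',\nu_0}$, so the sum over the $\sim2^{k'(n-1)/2}$ output cones cannot be reduced to one application of Cotlar--Stein; one must extract orthogonality on the output side while keeping the multilinear Calderón--Zygmund bounds of the rescaled pieces uniform, and at the $h^1$ and $\bmo$ endpoints these pieces must satisfy molecular estimates rather than mere $L^2$ bounds. Interlocking the Fourier-integral geometry of the output variable with the multilinear Hardy-space theory, with no loss of any power of $2^k$ so that the sharp order in \eqref{ineq:criticalexponent} is attained, is the principal technical difficulty; by contrast the low-frequency and small-$|\xi_j|$ contributions, where the phases fail to be smooth, are routine along the lines indicated in the first paragraph.
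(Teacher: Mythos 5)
Your plan shares the paper's outer skeleton (reduction by convexity and multilinear interpolation to endpoints with $p_j\in\{1,2,\infty\}$, separate treatment of the compactly supported frequency piece), but the core of the argument is left unproved exactly where the theorem's difficulty sits, and you concede this yourself in the final paragraph. Concretely, two assertions carry all the weight and neither is established: (a) that the frequency-block inputs $A_j^{\ell_j}$, each costing the Seeger--Sogge--Stein loss $2^{\ell_j(n-1)|1/p_j-1/2|}$, can be resummed over $\ell_j$ ``by vector-valued orthogonality so that no power is overshot'' --- at the endpoints $X^{p_j}=\bmo$ or $h^1$ there is no such orthogonality, so paying the full loss per block and recombining is not a viable scheme; and (b) that the reassembly over the $\sim 2^{k'(n-1)/2}$ output cones ``costs exactly $2^{k'(n-1)|1/p_0-1/2|}$'' with target $h^1$ or $\bmo$ --- this is precisely the multilinear statement to be proved, and the tools you name for the residual sums (Schur, Cotlar--Stein, molecules) are $L^2$-based or merely sketched. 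The paper's proof is structured so that these losses never appear: the amplitude is decomposed according to which frequency dominates ($\sigma_j$) or which two are comparable ($\sigma_{j,k}$), the full order $m$ is absorbed into $x$-independent linear FIO factors $T^{\phase_j}_{b_j}$ of order $m_j$ (handled by the global Theorem \ref{linearhpthm}), and the sum over scales is organised as a paraproduct estimated by Carleson-measure duality, using Lemma \ref{lem:Wase-Lin} (uniform $\bmo\to L^\infty$ bounds for $P_k^{u}\circ T^{\phase}_{b}$), Lemma \ref{lem:smallcarlnorm} (Carleson norms decaying like $2^{-k/2}$, which supplies the summability you are missing), and the $H^1$--$\mathrm{BMO}$ pairing of Proposition \ref{cor:monster 1}. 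Nothing in your proposal substitutes for this machinery.

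Two steps you treat as routine are also genuine obstacles. The reduction to $x$-independent amplitudes ``by the usual Fourier-expansion-in-$x$ device'' is not available for amplitudes without compact $x$-support; in the paper the surviving $x$- and $k$-dependent factor $M_{\mathfrak m}$, and in particular its position relative to the outer operator $T^{\phase_0}_{b_0}$, is identified as a principal difficulty, resolved by the mod-$3$ periodisation trick in Section \ref{sec:sigmaj} and, for $\sigma_{j,k}$, by telescoping $P^0_k$ and a commutator bound for $[P^0_k,M_{\mathfrak m}]$ in Section \ref{sec:sigmaij}. Likewise, the linear $h^1$ and $\bmo$ bounds you invoke must hold globally in $x$ (and for the low-frequency part of the phase, whose kernel decays only like $|x|^{-n-1}$); ``Seeger--Sogge--Stein, duality and interpolation'' gives the local statement, whereas the global result, with the sharp restriction $p>n/(n+1)$, is itself one of the theorems the paper has to prove via the Ruzhansky--Sugimoto globalisation for atoms on large balls. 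So, while the frequency-geometric picture you describe is reasonable, the proposal has a genuine gap: the lossless endpoint summation and the treatment of the global, $x$-dependent pieces are asserted rather than proved, and the methods you cite do not supply them.
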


We can compare this result with the earlier work \cite{Monster} by the same authors. The first novelty of the present result is its global nature in the sense that it doesn't require the amplitudes $\sigma(x,\Xi)$ to be compactly supported in the spatial variable $x$. Indeed, this paper establishes the first global results to date for multilinear (or even bilinear) Fourier integral operators. The second novelty is that we allow a component of the phase function of $T^\Phase_\sigma$ to depend on a mix of the variables $\xi_1, \dots, \xi_N$ in a way that is dictated by the nonlinear wave equation applications, as demonstrated above. In \cite{Monster}, the phase $\phase_0$ was not present (that is, it was identically zero). The third novelty is that the results are proved for multilinear and not just bilinear operators as was the case in \cite{Monster}. There is also a difference in the function spaces considered. In \cite{Monster} the end-point function spaces whose products formed the domain of the operator were the real Hardy space $H^1$ and its dual $\BMO$, whereas here we consider the larger function space $h^1$ and its dual $\bmo$. Although $h^1$ was used as an important technical tool in \cite{Monster}, here it is centre stage. In \cite{Monster} the restriction $p_0 \geq 1$ was not imposed and the target space $X^{p_0}$ was $L^{p_0}$ even for $p_0 \leq 1$. The natural improvement to consider here would be $X^{p_0}$ equal to the local Hardy space $h^{p_0}$ when $p_0 \leq 1$, but this possibility is reserved for a forthcoming paper.

In proving our multilinear boundedness results, it behoved us also to prove the global regularity of linear Fourier integral operators on local Hardy spaces $h^p$ and local spaces of functions of bounded mean oscillations $\bmo$. The local version of this result is stated in the work of M.~Peloso and S.~Secco \cite{PS}, but is not enough for our purposes. Indeed it is not enough even if the amplitude $\sigma(x,\Xi)$ is assumed to have compact $x$-support. This is because the introduction of the mixed phase $\phase_0(\xi_1+\dots+\xi_N)$ leads to the appearance of global Fourier integral operators in the subsequent high frequency decomposition of the operator, so the more complicated phase appears to necessitate the study of global regularity of linear Fourier integral operators. The global linear regularity is proved by a suitable extension of the method of A.~Seeger, C.~Sogge and E.~Stein \cite{SSS} and the globalisation procedure of M.~Ruzhansky and M.~Sugimoto in \cite{RuzhSug}. We prove this regularity for exponents $p > n/(n+1)$, which differs from the range in the local case where $p$ can take any positive value. However, we prove this is the sharp range in the global setting. In the present paper we only make use of this linear result in the case $p=1$, but the full range of exponents will come into play in the forthcoming paper mentioned above.

Beyond the need to understand global Fourier integral operators, the presence of the mixed phase $\phase_0(\xi_1+\dots+\xi_N)$ leads to other difficulties. The underlying cause of these difficulties is the failure of commutator techniques which were an essential ingredient in \cite{Monster}. To successfully apply such techniques in this context would require better control of the commutator between a linear Fourier integral operator and a multiplication operator (that which is denoted $M_{\mathfrak m}$ in Section \ref{sec:sigmaj}) than seems reasonable to expect. Instead we succeed in decomposing the operators into a sum of the constant coefficient operators (that is, the case $\sigma(x,\Xi)$ does not depend on $x$, which corresponds to $M_{\mathfrak m}$ being the identity operator). This requires at times careful control of the Carleson measure generated by a $\bmo$ function.

The  multilinear results of this paper are then achieved through the following steps. First we identify the end-points that are needed for the complex interpolation which leads to the regularity of multilinear Fourier integral operators on products of $L^p$ spaces. Thereafter we make a multilinear phase space analysis to divide the operator according to various frequency supports of the amplitude.  This creates a number of cases with their associated difficulties, that will be dealt with in accordance to the form of the endpoints in question. Finally complex interpolation yields the main result.

The paper is organised as follows, In Section \ref{subsection:definitions} we recall some definitions, and results from linear and multilinear harmonic and microlocal analysis. In Section \ref{global hp and bmo results} we prove the global $h^p$ and $\bmo$ regularity of Fourier integral operators using among other things, Ruzhansky-Sugimoto's globalisation procedure and in Section \ref{Sharpness} we show that the results are actually sharp. Section \ref{endpoint cases} is devoted to finding the so-called endpoints for which the complex interpolation would provide the final regularity result for multilinear Fourier integral operators. Finally in Sections~\ref{sec:sigmaj}, \ref{sec:sigmaij} and \ref{sec:sigma0} we systematically analyse all the endpoint cases for various frequency localisations.

%%%%%%%%%%%%%%%%%%%%%%%%%%%%%%%%%%%%%%%%%%
%%%%%%%%%%%%%%%%%%%%%%%%%%%%%%%%%%%%%%%%%%
%%%%%%%%%%%%%%%%%%%%%%%%%%%%%%%%%%%%%%%%%%
\section{Definitions and Preliminaries}\label{subsection:definitions}
In this section, we will collect all the definitions that will be used throughout this paper. We also state some useful results from both harmonic and microlocal analysis which will be used in the proofs of our results.

The proof of Theorem \ref{thm:main} builds upon the corresponding linear results. Indeed, as mentioned in Section \ref{sec:intro}, the proof we present requires new linear boundedness results. We begin by recalling the definitions of the linear versions of the main objects of study in this paper. The multilinear amplitudes defined in Definition \ref{defn:amplitudes} reduce to the classical H\"ormander classes $S^m$ of \emph{amplitudes} (or \emph{symbols}) in the case $N=1$, that is to say $S^m = S^m(n,1)$. The same is true of linear Fourier integral operators: They are the special case of \eqref{eq:foidef} when $N=1$, so in that case we write
\begin{equation*}
T_a^{\phase}f(x) := \int_{\R^n}e^{ix\cdot\xi +i\varphi(\xi)}a(x,\xi)\widehat{f}(\xi) \ddd \xi,
\end{equation*}
for a given amplitude $a \in S^m$ and phase function $\phase$. Such an operator is called pseudodifferential operator under the further restriction that $\phase \equiv 0$. In this case it is useful to introduce slightly different (although widely used) notation: For $a \in S^m$ we define a (\emph{linear}) \emph{pseudodifferential operator} to be the operator
\begin{equation*}
a(x,D)f(x) := \int_{\R^n}e^{ix\cdot\xi}a(x,\xi)\widehat{f}(\xi) \ddd \xi,
\end{equation*}
which, as is the case for all FIOs, is a priori defined on the Schwartz class $\mathscr{S}(\R^n).$ The terminology \emph{symbol} is typically used in connection with pseudodifferential operators and \emph{amplitude} in connection with Fourier integral operators.

We will denote constants which can be determined by known parameters in a given situation, but whose values are not crucial to the problem at hand, by $C$ or $c$, sometimes adding a subscript, for example $c_\alpha$, to emphasis a dependency on a given parameter $\alpha$. Such parameters are those which determine function spaces, such as $p$ or $m$ for example, the dimension $n$ of the underlying Euclidean space, and the constants connected to the seminorms of various amplitudes or phase functions. The value of the constants may differ from line to line, but in each instance could be estimated if necessary. We also write $a\lesssim b$ as shorthand for $a\leq Cb$ and $a\approx b$ when $a\lesssim b$ and $b\lesssim a$. By
\m{
B(x,r) := \{ y\in\R^n \, : \, |y-x| < r\}
}
we denote the open ball of radius $r > 0$ centred at $x\in\R^n$.

The following partition of unity is a standard tool in harmonic analysis and is even used to define the function spaces that we are concerned with.

\begin{defn}\label{def:LP}
Let $\psi_0 \in \mathcal C_c^\infty(\R^n)$ be equal to $1$ on $B(0,1)$ and be supported in $B(0,2)$. We define
$$\psi_j(\xi) := \psi_0 \left (2^{-j}\xi \right )-\psi_0(2^{-(j-1)}\xi),$$
for integers $j\geq 1$. Then one has the following \emph{Littlewood-Paley partition of unity}:
\nm{eq:littlewoodpaley}{
\sum_{j=0}^\infty \psi_j(\xi) = 1 \quad \text{\emph{for all }}\xi\in\R^n.
}
\end{defn}
With the help of the Littlewood-Paley partition of unity we define local Hardy spaces first introduced by D.~Goldberg~\cite{Gol}.

\begin{defn}\label{def:Triebel}
For each $0 < p <\infty$ the following characterisations of the \emph{local Hardy space} $h^p(\R^n)$ are equivalent. See, for example, \cites{Triebel} and \cites{PS}.
\begin{enumerate}[label=\emph{(}\roman*\emph{)}, ref=(\roman*)]
\item

The set of tempered distributions $f \in {\mathscr{S}}'(\R^n)$ such that
\begin{equation*}\label{eq:hp basic}
	\|f\|_{{h}^p(\R^n)}^{[1]} := \brkt{\int \sup_{0<t<1} \abs{\psi_0(tD) f(x)}^p\dd x}^{\frac 1 p}<\infty.
\end{equation*}

\item \label{defn:hpsquarefunction} The set of all $f \in {\mathscr{S}}'(\R^n)$ such that
    \begin{equation*}
    \|f\|_{{h}^p(\R^n)}^{[2]} := \norm{\left(	\sum_{j=0}^\infty|\psi_j(D)f|^{2} \right)^{\frac{1}{2}}}_{L^p(\R^n)} <\infty.
    \end{equation*}

\item \label{defn:hpatomic} Fix
    \m{
    M\geq \left\lfloor n\brkt{\frac 1p -1}_+\right\rfloor,
    }
    where $\lfloor x \rfloor$ denotes the integer part of $x$. The set of all $f \in {\mathscr{S}}'(\R^n)$ for which there exist a sequence $(\lambda_j)_{j=1}^\infty$ of numbers and a sequence $(a_j)_{j=1}^\infty$ of $(h^p,M)$-atoms \emph{(abbreviated $h^p$-atoms below)} such that
    \m{
    f=\sum_{j}\lambda_{j}a_{j}
    }
    and
    \begin{equation*}
    \|f\|_{{h}^p(\R^n)}^{[3]} := \left(\sum_j |\lambda_j|^p\right)^{1/p} < \infty.
    \end{equation*}
    A function $a$ is called an $(h^p,M)$-atom if for some $x_0\in \R^n$ and $r>0$ the following three conditions are satisfied:
\begin{enumerate}[label=\emph{(\alph*)}, ref=(\alph*)]
\item $\supp a\subseteq B(x_{0}, r)$;
\item $\displaystyle |a(x)|\leq|B(x_{0}, r)|^{-\frac 1p}$; and
\item If $r\leq 1$ and $|\alpha|\leq M$, then
\m{\int_{\R^n} x^{\alpha}a(x)\dd x=0.} 
\end{enumerate}
   \item \label{defn:hppelososecco} The set of all $f \in {\mathscr{S}}'(\R^n)$ such that
    \begin{equation*}
     \Vert f\Vert _{h^p}^{[4]} := \Vert \psi_0(D) f \Vert_{L^p}+\sum_{M\leq |\alpha|\leq M+1} \sup_{0<\varepsilon\leq 1}\Vert r^{\alpha}_{\varepsilon} (D) f\Vert_{L^p} < \infty,
     \end{equation*}
     where $M$ is as in the characterisation $(ii)$ above,  and
    \m{
    r_{\varepsilon}^{\alpha} (\xi)= \psi_0(\varepsilon \xi) \prod_{i=1}^n \big(\frac{\xi_i}{|\xi|}\Big)^{\alpha_i} (1-\psi_0(\xi))^{\alpha_i}.
    }
    \item \label{eq:hp} The set of all $f \in {\mathscr{S}}'(\R^n)$ such that
\begin{equation*}
	\|f\|_{{h}^p(\R^n)}^{[5]} := \brkt{\int \sup_{0<t<1}\sup_{\abs{x-y}<t} \abs{\psi_0(tD) f(y)}^p\dd x}^{\frac{1}{p}} < \infty.
\end{equation*}
\end{enumerate}
Moreover all the norms here are equivalent, that is
\m{
\|f\|_{{h}^p(\R^n)}^{[1]} \thickapprox \|f\|_{{h}^p(\R^n)}^{[2]} \thickapprox \|f\|_{{h}^p(\R^n)}^{[3]} \thickapprox \|f\|_{{h}^p(\R^n)}^{[4]} \thickapprox \|f\|_{{h}^p(\R^n)}^{[5]},
}
with implicit constants that only depend on the dimension $n$ and the choice of $\psi_0$ in the Littlewood-Paley decomposition, so we simply write $\|f\|_{{h}^p(\R^n)}$ for all of them.
\end{defn}
{{It is also shown in \cite{Gol} that a function $f$ belongs to the local Hardy space $h^1$ if, and only if $f\in L^1$ and $\mathfrak{R}_j((1-\psi_0))(D)f)\in L^1$ where $\mathfrak{R}_j$ denotes the $j$-th Riesz transform, i.e. 
$\widehat{\mathfrak{R}_{j}f}(\xi):= -i\frac{\xi_j}{|\xi|}\widehat{f}(\xi),$ $j=1,\dots, n.$ We record here for future use the more familiar special case of Definition \ref{def:Triebel} \ref{defn:hppelososecco} when $p=1$:
\begin{equation}\label{eq:local_H1}
\begin{split}
	\norm{f}_{h^1} &\thickapprox \norm{f}_{L^1}+\sum_{j=1}^n \norm{\mathfrak{R}_j((1-\psi_0)(D)f)}_{L^1}\thickapprox \norm{\psi_0 (D)f}_{L^1} +\norm{(1-\psi_0)(D) f}_{H^1}.
\end{split}
\end{equation}}}
% where the last estimate follows from the well-known characterization of the $H^1$ norm (see e.g. \cite{S0}*{p. 221}). 
The dual of the local Hardy space $h^1$ is the \textit{local} $\BMO$ space, which is denoted by $\bmo$ and consists of locally integrable functions that verify
\begin{equation}\label{defn:bmo}
\Vert f\Vert_{\bmo}:= \Vert f\Vert_{\BMO}+ \Vert \psi_0 (D) f\Vert_{L^{\infty}}<\infty,
\end{equation}
where $\BMO$ is the usual John-Nirenberg space of functions of bounded mean oscillation (see \cite{S} for the definition) and $\psi_0$ is the cut-off function introduced in Definition \ref{def:LP}.

To bound the low frequency part of an FIO, where the phase function is singular, we will make use of the following lemma, whose proof is a scholium of Lemma 1.17 in \cite{DW}, and therefore left to the reader.\\ 

\begin{lem}\label{main low frequency estim}
     Let $a(\xi)\in C^{\infty}_{c}(\R^n)$ be supported in a neighbourhood of the origin. Assume also that $\varphi (\xi)\in C^{\infty}(\R^n \setminus 0)$, is positively homogeneous of degree one.
 Then for all $0 \leq \varepsilon <1$ we have
     \begin{align*}
     \label{LowFreq:KernelEst1}
          \Big| \int e^{i\varphi(\xi)-i x\cdot\xi } a(\xi) \, \dd \xi \Big| \lesssim \langle x\rangle^{-n-\varepsilon}.
     \end{align*}
\end{lem}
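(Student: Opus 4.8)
The plan is to reduce to $|x|$ large, peel off the obstruction to decay caused by the non-smoothness of $\varphi$ at the frequency origin, and then run a dyadic frequency decomposition of what remains near the origin. Denote the integral in the statement by $I(x)$; since $|I(x)|\leq\|a\|_{L^1}\lesssim1$ and $\langle x\rangle$ is bounded on bounded sets, it suffices to prove $|I(x)|\lesssim|x|^{-n-\varepsilon}$ for $|x|$ large. As $\varphi$ is continuous on the compact sphere it is bounded there, so $|e^{i\varphi(\xi)}-1|\leq|\varphi(\xi)|\lesssim|\xi|$; together with $\varphi(0)=0$ (forced by the homogeneity) this suggests writing $e^{i\varphi(\xi)}=1+\bigl(e^{i\varphi(\xi)}-1\bigr)$, splitting $I(x)=\widehat a(x)+\int_{\R^n}e^{-ix\cdot\xi}\bigl(e^{i\varphi(\xi)}-1\bigr)a(\xi)\,\dd\xi$. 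The first term is the Fourier transform of a Schwartz function, hence $\lesssim_N|x|^{-N}$ for every $N$. Fixing $\theta_0\in C^\infty_c(B(0,1))$ with $\theta_0\equiv1$ on $B(0,1/2)$, I would split the amplitude of the second term using $\theta_0$: on the support of $(1-\theta_0)a$ the phase $\varphi$ is smooth and $|\nabla\varphi|$ is bounded, so $|\nabla_\xi(\varphi(\xi)-x\cdot\xi)|\gtrsim|x|$ there once $|x|$ is large, and repeated non-stationary phase integration by parts makes that piece $\lesssim_N|x|^{-N}$.

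It then remains to bound $J(x):=\int_{\R^n}e^{-ix\cdot\xi}g(\xi)\,\dd\xi$, where $g:=\theta_0\cdot(e^{i\varphi}-1)\cdot a$ is supported in $B(0,1)$ and smooth away from the origin. The key observation is that $g$ obeys symbol-type bounds coming from the homogeneity of $\varphi$: since $\partial^\alpha\varphi$ is positively homogeneous of degree $1-|\alpha|$ and smooth on $\R^n\setminus\{0\}$, Fa\`a di Bruno's formula gives $|\partial^\alpha(e^{i\varphi(\xi)}-1)|\lesssim_\alpha|\xi|^{1-|\alpha|}$ for $0<|\xi|\leq1$ (the case $\alpha=0$ being the inequality above), whence $|\partial^\alpha g(\xi)|\lesssim_\alpha|\xi|^{\varepsilon-|\alpha|}$ on $0<|\xi|<1$ for any $\varepsilon\leq1$. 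Decomposing $g=\sum_{k\geq1}g_k$ by a dyadic partition of unity in $\xi$, with $g_k$ supported where $|\xi|\approx2^{-k}$, these bounds and the volume $\approx2^{-kn}$ of the support yield $\|\partial^\alpha g_k\|_{L^1}\lesssim2^{k(|\alpha|-\varepsilon-n)}$; in particular $\|g_k\|_{L^1}\lesssim2^{-k(n+\varepsilon)}$ and $\|(-\Delta_\xi)^Ng_k\|_{L^1}\lesssim2^{k(2N-\varepsilon-n)}$.

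For each $k$ I would estimate $\widehat{g_k}(x)$ in two ways: trivially by $\|g_k\|_{L^1}\lesssim2^{-k(n+\varepsilon)}$, and — integrating by parts $N$ times against the \emph{linear} phase $-x\cdot\xi$ alone, so that the singular phase $\varphi$ is never differentiated — by $|x|^{-2N}\|(-\Delta_\xi)^Ng_k\|_{L^1}\lesssim|x|^{-2N}2^{k(2N-\varepsilon-n)}$. The two bounds balance at $2^k\approx|x|$. Using the integration-by-parts bound for the (finitely many) $k$ with $2^k\leq|x|$ (choosing $N$ with $2N>n+\varepsilon$) and the trivial bound for $2^k>|x|$, both geometric series sum to $O(|x|^{-n-\varepsilon})$, so $|J(x)|\leq\sum_{k\geq1}|\widehat{g_k}(x)|\lesssim|x|^{-n-\varepsilon}$. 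Collecting the three contributions gives $|I(x)|\lesssim\langle x\rangle^{-n-\varepsilon}$.

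The step the argument hinges on — and the likely pitfall — is the passage from the easy bound $O(|x|^{-n})$ to the sharper $O(|x|^{-n-\varepsilon})$: decomposing $e^{i\varphi}a$ itself dyadically near the origin only gives $\|(\text{dyadic piece})\|_{L^1}\lesssim2^{-kn}$ and hence $O(|x|^{-n})$, whereas subtracting the constant $e^{i\varphi(0)}=1$ first is exactly what improves this to $\|g_k\|_{L^1}\lesssim2^{-k(n+\varepsilon)}$ and produces the extra decay. (In fact $|\partial^\alpha(e^{i\varphi}-1)|\lesssim|\xi|^{1-|\alpha|}$ would permit $\varepsilon=1$, so the method delivers slightly more than stated.) The remaining point needing care is the bookkeeping of the homogeneity degrees in the Fa\`a di Bruno estimate for $e^{i\varphi}-1$, which is what feeds the symbol bounds on the pieces $g_k$.
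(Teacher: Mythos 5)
Your proof is correct. Note that the paper does not write out an argument for this lemma at all --- it is delegated to Lemma 1.17 of \cite{DW} as a ``scholium'' --- so there is no in-paper proof to compare with; your argument is essentially the standard one behind that citation: the only mechanism that can beat the trivial $\langle x\rangle^{-n}$ bound is the first-order vanishing of $e^{i\varphi(\xi)}-1$ at $\xi=0$ (a consequence of homogeneity), exploited via a dyadic decomposition near the frequency origin, with integration by parts against the linear phase on the shells $2^{-k}\geq |x|^{-1}$ and the trivial $L^1$ bound on the remaining shells, plus non-stationary phase away from the origin. Your closing remark that the method in fact yields the endpoint $\varepsilon=1$ is also consistent with Section \ref{Sharpness} of the paper, where the kernel is shown to have exact decay of order $|x|^{-(n+1)}$.
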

The following lemma will also prove useful in bounding the low frequency part of an FIO. It is a consequence of a result due to J.~Peetre \cite{Peetre}.
\begin{lem}\label{peetre lemma}
Let $f\in \mathcal C^1(\R^n)$ have Fourier support contained inside the unit ball. Then for every $\rho>n$, and $r \in ( n/\rho,1]$ one has
\begin{equation}
    \left (\langle \cdot\rangle^{-\rho} \ast |f|\right )(x)\lesssim \Big (M(|f|^r)(x)\Big ) ^{1/r}, \quad x \in \mathbb{R}^n,
\end{equation}
where $M$ denotes the Hardy-Littlewood maximal function on $\R^n$. 

\end{lem}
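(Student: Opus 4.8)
\textbf{Proof proposal for Lemma \ref{peetre lemma}.}

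The plan is to reduce the convolution estimate to the classical Peetre maximal-function inequality for band-limited functions. Recall that Peetre's inequality (see \cite{Peetre}) states that if $f$ has Fourier support in the unit ball, then for any $r>0$ the Peetre maximal function satisfies
\m{
\sup_{y\in\R^n}\frac{|f(x-y)|}{\langle y\rangle^{n/r}}\lesssim \big(M(|f|^r)(x)\big)^{1/r},\quad x\in\R^n,
}
with implicit constant depending only on $n$ and $r$. Granting this, I would simply split the convolution integral dyadically in $|y|$: writing $\langle\cdot\rangle^{-\rho}\ast|f|(x)=\int_{\R^n}\langle y\rangle^{-\rho}|f(x-y)|\dd y$, I estimate $|f(x-y)|\le\langle y\rangle^{n/r}\sup_{z}\frac{|f(x-z)|}{\langle z\rangle^{n/r}}$, pull the supremum out of the integral, and are left with $\int_{\R^n}\langle y\rangle^{-\rho+n/r}\dd y$. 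The condition $r>n/\rho$ is exactly what makes $\rho-n/r>n$, so this integral converges and is a finite constant depending only on $n$, $\rho$, $r$. Combining with Peetre's inequality then gives the claimed bound.

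If instead one wants a more self-contained argument, I would prove the auxiliary Peetre inequality directly. Fix $\Theta\in\mathcal C_c^\infty(\R^n)$ equal to $1$ on the unit ball, so that $f=\Theta(D)f=f\ast\check\Theta$ where $\check\Theta\in\mathscr S$ decays faster than any polynomial. Then for any $y$,
\m{
|f(x-y)|=\Big|\int f(x-y-w)\check\Theta(w)\dd w\Big|\le\int|f(x-y-w)|\,|\check\Theta(w)|\dd w.
}
Using the submultiplicativity $\langle y\rangle\le\langle y+w\rangle\langle w\rangle$ and the rapid decay of $\check\Theta$, one bounds $\frac{|f(x-y)|}{\langle y\rangle^{n/r}}$ by a constant times $\big(\langle\cdot\rangle^{-K}\ast|f|^{?}\big)$-type expressions; the standard trick is to first raise to the power $r\le1$ inside (using subadditivity $|a+b|^r\le|a|^r+|b|^r$ when $r\le 1$, which is available here since $r\in(n/\rho,1]$) to linearise, and then recognise the resulting averages of $|f|^r$ over all scales as being controlled by $M(|f|^r)(x)$. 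One has to be slightly careful because $\check\Theta$ is not supported in a single ball, but its rapid decay lets one sum the dyadic annular contributions with geometrically decreasing weights.

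The main obstacle is the second route's bookkeeping: controlling the average of $|f|^r$ against the Hardy--Littlewood maximal function uniformly over all translation amounts $y$ and all convolution scales requires the dyadic decomposition of $\check\Theta$ into pieces supported near $|w|\approx 2^k$ and then summing $\sum_k 2^{-kN}$ against the corresponding maximal averages — a convergent geometric series precisely because $r\le 1$ keeps the exponents well-behaved and $\rho>n$, equivalently $n/r<\rho$ after optimising, keeps the polynomial weight integrable. Since the statement explicitly attributes the inequality to Peetre, the cleanest exposition is the first route: quote Peetre's maximal inequality as a black box and spend one line on the dyadic integral in $y$, which is where the hypotheses $\rho>n$ and $r\in(n/\rho,1]$ enter transparently.
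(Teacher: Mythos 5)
There is a genuine gap in your main (first) route, and it is quantitative, not just bookkeeping. You invoke Peetre's inequality in the form $\sup_{y}|f(x-y)|\langle y\rangle^{-n/r}\lesssim (M(|f|^r)(x))^{1/r}$, pull the supremum out of the whole convolution, and are left with $\int_{\R^n}\langle y\rangle^{-\rho+n/r}\,\dd y$. For that integral to converge you need $\rho-n/r>n$, i.e.\ $r>n/(\rho-n)$, which is \emph{not} implied by the hypothesis $r>n/\rho$: that hypothesis only gives $\rho-n/r>0$, equivalently $\rho r>n$. For instance with $n=2$, $\rho=3$, $r=1$ (allowed by the lemma) your remaining integral is $\int\langle y\rangle^{-1}\dd y=\infty$. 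In fact your argument can only ever prove the lemma in the range $r>n/(\rho-n)$, which is strictly smaller than the claimed range $r\in(n/\rho,1]$ and is empty inside $(0,1]$ whenever $\rho\leq 2n$. The problem is that bounding the full factor $|f(x-y)|$ by the Peetre supremum wastes all the decay of $|f|$ and forces the weight alone to pay for integrability; a crude local alternative (averaging $|f|$ over dyadic annuli) only produces $M(|f|)(x)$, which for $r<1$ dominates, rather than is dominated by, $(M(|f|^r)(x))^{1/r}$, so it is also insufficient.

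The missing idea, which is how the paper argues, is an interpolation between the supremum bound and the maximal bound: write $|f(x-y)|\langle y\rangle^{-\rho}=\bigl(|f(x-y)|\langle y\rangle^{-\rho}\bigr)^{1-r}\,|f(x-y)|^{r}\langle y\rangle^{-\rho r}$, pull out only the first factor as $\bigl(\sup_{z}|f(x-z)|\langle z\rangle^{-\rho}\bigr)^{1-r}$, and keep $|f|^{r}\langle y\rangle^{-\rho r}$ inside the integral. Peetre's inequality (with weight exponent $\rho$, valid since $r\geq n/\rho$) controls the supremum by $(M(|f|^r)(x))^{1/r}$, while the hypothesis $r>n/\rho$ is exactly what makes $\langle\cdot\rangle^{-\rho r}$ integrable and radially decreasing, so the remaining integral is $\lesssim M(|f|^r)(x)$. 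Multiplying, $(M(|f|^r)(x))^{(1-r)/r}\cdot M(|f|^r)(x)=(M(|f|^r)(x))^{1/r}$, which is the claim. Your second, "self-contained" route (reproving Peetre's inequality by convolving with $\check\Theta$) is a reasonable sketch of the black box itself, but it does not repair the reduction step above, which is where your proof currently fails.
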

\begin{proof}
As was shown by Peetre, see e.g. \cite{Triebel}*{Section 2.3.6}, one has for 
$r\geq n/\rho$ that 
\begin{equation}\label{Peetres inequality}
    \sup_{y\in\R^n} \frac{|f(x-y)|}{\langle y\rangle^{\rho}}
\lesssim \Big( M(|f|^r)(x)\Big)^{1/r}.
\end{equation}
Now taking $r\in (n/\rho, 1]$, and using \eqref {Peetres inequality} we obtain
\begin{align*}
|\langle \cdot\rangle^{-\rho} \ast f(x)|
& \lesssim \int_{\R^n} \frac{|f(x-y)|}{\langle y\rangle^{\rho} } \dd y
\leq \Big(\sup_{y\in\R^n} \frac{|f(x-y)|}{\langle y\rangle^{\rho} }\Big)^{1-r} 
\int_{\R^n} \frac{|f(x-y)|^r}{\langle y\rangle^{\rho r} } \dd y
\\ & \lesssim \Big(M(|f|^r)(x)\Big)^{1/r}.\qedhere
\end{align*}
\end{proof}

%We will also need a weaker version of an $L^p$ space.\marginpar{D: Do we need these in the Banach case? No actually we don't. In the Banach case the equations \eqref{Hasses2} should be a consequence of Bernstein's estimates.} Hence, following H.~Triebel \cite{Triebel}, we define $L^p$ spaces with compact Fourier support.
%\begin{defn}\label{def:Lp Fourier support}
%Let $0<p\leq \infty$ and $\mathcal{K} \subset \R^n$ be a compact set. Define
%\begin{equation*}
%L_\mathcal{K}^p(\R^n) := \left \{ f\in \S ' (\R^n) \, :\, \norm{f}_{L^p(\R^n)} <\infty,  \,\mathrm{supp}\, \widehat f \subset \mathcal{K}  \right \}
%\end{equation*}

%\end{defn}
%In connection to this and the convolution of functions in $L^p_\mathcal{K}(\R^n)$ spaces, the following lemma, whose proof can be found in  Remark 2 on page 28 of \cite{Triebel}, will be useful.
%\begin{lem}\label{lem:hassesfaltning}
%Let $\mathcal{K} := \overline{B(0,r)}$ for some $r>0$ and let $f,g\in L^p_\mathcal{K}(\R^n)$ for $0<p\leq 1$. Then
%\m{
%\norm{f*g}_{L^p(\R^n)}\lesssim r^{n\left ( \frac 1p -1\right )} \norm{f}_{L^p(\R^n)}\norm{g}_{L^p(\R^n)}.
%}
%\end{lem}

In the analysis of multilinear operators, a basic tool is a certain type of measure whose definition we now recall. A Borel measure $\dddd\mu(x,t)$ on $\R^{n+1}_+$ is called a \emph{Carleson measure} if
\m{
\Vert \dddd \mu\Vert_{\mathcal{C}} := \sup_Q \frac{1}{|Q|}\int_0^{\ell(Q)}\int_Q |\dddd\mu(x,t)| < \infty
}
where the supremum is taken over cubes all $Q \subset \R^n$ and $\ell(Q)$ denotes the diameter of $Q$ and $|Q|$ its Lebesgue measure. The quantity $\Vert \dddd \mu\Vert_{\mathcal{C}}$ is called the \emph{Carleson norm} of $\dddd \mu$. In this paper we are exclusively interested in Carleson measures which are supported on lines parallel to the boundary of $\R^{n+1}_+$. More precisely, in what follows all Carleson measures will be supported on the set
\m{
E := \{(x,t) \, : \, \mbox{$x\in\R^n$ and $t=2^{-k}$ for some $k\in\Z$}\}
}
so they take the form
\m{
\sum_{k\in\Z}\dddd\mu(x,t)\delta_{2^{-k}}(t),
}
where $\delta_{2^{-k}}(t)$ is a Dirac measure at $2^{-k}$. This will be assumed throughout without further comment.

We recall some basic results concerning Carleson measures due to L. Carleson \cite{Carl} which are also (as we shall see) useful in the context of multilinear operators. See also E. M. Stein \cite{S} for more streamlined and simplified proofs of the following results.
{{
\begin{lemma}\label{Coifman-Meyerslem}
If $\dd \mu(x, t)$ is a Carleson measure, then 
\begin{equation*}
    \sum_{k}\int _{\R^n}F(x,2^{-k}) \, \dd \mu(x, 2^{-k})\leq C_n \norm{ \dd \mu}_{\mathcal{C}}\int (\sup_{k} \sup_{\abs{y-x} < 2^{-k}} |F(y,2^{-k})|) \dd x.
\end{equation*}
moreover for $0<p<\infty$ one has 
\begin{equation}\label{carleson estim}
    \sum_{k}\int _{\R^n}|F(x,2^{-k})|^p \, \dd \mu(x, 2^{-k})\leq C_n \norm{ \dd \mu}_{\mathcal{C}}\int (\sup_{k} \sup_{\abs{y-x} < 2^{-k}} |F(y,2^{-k})|)^p \dd x.
\end{equation}

Consequently, if $\varphi$ satisfies $|\varphi(x)|\lesssim \langle x\rangle^{-n-\varepsilon}$ \emph{(for some $0<\varepsilon<\infty$), then}
\begin{equation}\label{ineq:carll2}
    \sum_{k}\int _{\R^n}|\varphi(2^{-k}D) f(x)|^2 \, \dd \mu(x, 2^{-k})\leq C_n \norm{\dddd \mu}_{\mathcal{C}} \norm{f}_{L^2}^2,
\end{equation}
and if $\varphi$ is a bump function supported in a ball near the origin with $\phi(0)=1$ then one also has
\begin{equation}\label{ineq:carlh1}
    \sum_{k}\int _{\R^n}|\varphi(2^{-k}D) f(x)| \, \dd \mu(x, 2^{-k})\leq C_n \norm{\dd \mu}_{\mathcal{C}} \norm{f}_{h^1}.
\end{equation}
\end{lemma}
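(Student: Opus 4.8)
The plan is to prove the first estimate of the lemma---the one bounding the Carleson measure against the non-tangential maximal function---by the classical tent-region argument (cf. \cite{Carl}; see also \cite{S}), and then to read off the other three from it.

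First I would reduce matters. Since $\bigl|\sum_k\int F\,\dd\mu\bigr|\le\sum_k\int\abs{F}\,\dd\abs{\mu}$ and $\norm{\dd\abs{\mu}}_{\mathcal C}=\norm{\dd\mu}_{\mathcal C}$, it suffices to treat the case $\dd\mu\ge0$, $F\ge0$; writing $F^*(x):=\sup_k\sup_{\abs{y-x}<2^{-k}}F(y,2^{-k})$ for the non-tangential maximal function, the claim becomes
\[
\sum_k\int_{\R^n}F(x,2^{-k})\,\dd\mu(x,2^{-k})\le C_n\norm{\dd\mu}_{\mathcal C}\int_{\R^n}F^*(x)\,\dd x .
\]
Granting this, the $p$-th power version \eqref{carleson estim} follows by applying it with $\abs{F}^p$ in place of $F$ and using $(\abs{F}^p)^*=(F^*)^p$. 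To prove the displayed estimate, the layer-cake formula writes its left-hand side as $\int_0^\infty\mu(A_\lambda)\,\dd\lambda$ with $A_\lambda:=\{(x,2^{-k})\in E:F(x,2^{-k})>\lambda\}$, so everything reduces to the single bound $\mu(A_\lambda)\le C_n\norm{\dd\mu}_{\mathcal C}\,\abs{O_\lambda}$, where $O_\lambda:=\{x\in\R^n:F^*(x)>\lambda\}$.

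For this I would argue as follows. The set $O_\lambda$ is open, since $F^*$ is lower semicontinuous (a supremum of lower semicontinuous functions); moreover, if $(x,2^{-k})\in A_\lambda$ then $B(x,2^{-k})\subseteq O_\lambda$, because for $\abs{y-x}<2^{-k}$ the scale $2^{-k}$ is admissible in the supremum defining $F^*(y)$, giving $F^*(y)\ge F(x,2^{-k})>\lambda$; in particular $2^{-k}\le\dist{x}{O_\lambda^c}$. Now take a Whitney decomposition of $O_\lambda$ into pairwise disjoint dyadic cubes $\{Q_i\}$ with $\ell(Q_i)\approx\dist{Q_i}{O_\lambda^c}$. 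Any $(x,2^{-k})\in A_\lambda$ has $x\in Q_i$ for some $i$, so $2^{-k}\le\dist{x}{O_\lambda^c}\le\dist{Q_i}{O_\lambda^c}+\ell(Q_i)\lesssim\ell(Q_i)$, and hence $(x,2^{-k})$ lies in the Carleson box $Q_i^*\times(0,\ell(Q_i^*)]$ over a suitable dimensional dilate $Q_i^*$ of $Q_i$. Summing the Carleson estimate over these boxes and using that the $Q_i$ partition $O_\lambda$,
\[
\mu(A_\lambda)\le\sum_i\norm{\dd\mu}_{\mathcal C}\,\abs{Q_i^*}=C_n\norm{\dd\mu}_{\mathcal C}\sum_i\abs{Q_i}=C_n\norm{\dd\mu}_{\mathcal C}\,\abs{O_\lambda};
\]
integrating in $\lambda$ yields the first estimate of the lemma.

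It then remains to derive \eqref{ineq:carll2} and \eqref{ineq:carlh1}. For \eqref{ineq:carll2}, apply \eqref{carleson estim} with $p=2$ to $F(x,2^{-k})=\varphi(2^{-k}D)f(x)$: the hypothesis $\abs{\varphi(x)}\lesssim\bra{x}^{-n-\eps}$ provides an integrable radially decreasing majorant for the (suitably rescaled) kernel, so $F^*(x)\lesssim Mf(x)$ pointwise, and $\norm{Mf}_{L^2}\lesssim\norm{f}_{L^2}$ by the Hardy-Littlewood maximal theorem. For \eqref{ineq:carlh1}, apply \eqref{carleson estim} with $p=1$ to $F(x,2^{-k})=\abs{\varphi(2^{-k}D)f(x)}$: since $\varphi$ is a compactly supported bump with $\varphi(0)=1$, the non-tangential maximal function over the dyadic scales $t\le1$ is controlled by the maximal function characterising $h^1$ in Definition \ref{def:Triebel}\ref{eq:hp} (with $\psi_0$ replaced by $\varphi$, a routine modification), so its $L^1$ norm is $\lesssim\norm{f}_{h^1}$---this is precisely where the local Hardy space, rather than $H^1$, enters. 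I expect the only genuinely delicate point to be the bookkeeping of dimensional constants in the Whitney/tent step; everything else is a routine appeal to maximal-function bounds and to the characterisations of $h^1$ recorded above.
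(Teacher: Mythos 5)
The paper gives no proof of this lemma—it simply cites Carleson and Stein's monograph—and your argument reproduces precisely the standard proof found in those references: reduce to the distributional inequality $\mu(A_\lambda)\le C_n\norm{\dddd\mu}_{\mathcal C}|O_\lambda|$ via the layer-cake formula, establish it by the tent/Whitney covering of $O_\lambda$, and then read off \eqref{ineq:carll2} and \eqref{ineq:carlh1} from the non-tangential-maximal-function bounds $F^*\lesssim Mf$ and the $h^1$ grand-maximal characterisation. Your proof is correct and matches the approach the paper relies on.
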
}}
We also recall the quadratic estimate which is a consequence of Plancherel's Theorem: If $\varphi \in {\mathscr{S}}$ is such that $\varphi(0) = 0$, then
\nm{ineq:quadraticestimate}{
\sum_k \int \abs{\varphi(2^{-k}D)f(x)}^2 \dd x \lesssim \norm{f}_{L^2}^2.
}
Finally, we shall also use the following result which was stated and proved as Lemma~4.10 in \cite{Monster}
\begin{lemma}\label{lem:carlesoncomp} 
For any Carleson measure $\dddd\mu$ supported on $E$ and $K_k$ satisfying 
\begin{equation*}%\label{t_eq:decay}
	\abs{K_k(x-y)}\lesssim {2^{kn}}{\brkt{1+\frac{\abs{x-y}}{2^{-k}}}^{-n-\delta}}
\end{equation*}
for some $\delta>0$,
one has that
\[
	\dd \tilde{\mu}(x,t):=\sum_k\brkt{\int \abs{K_k(x-y)}\dd \mu(y,t)}\delta_{2^{-k}}(t)\dd x,
\]
defines a Carleson measure and $\norm{\dddd \tilde{\mu}}_{\mathcal{C}}\lesssim \norm{\dddd\mu}_{\mathcal{C}}$.
\end{lemma}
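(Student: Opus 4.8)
The plan is to estimate the Carleson norm of $\dddd\tilde\mu$ directly from the definition. Fix a cube $Q\subset\R^n$ and write $cQ$ for the cube concentric with $Q$ and $c$ times as wide. Since $\dddd\mu$ (and hence $\dddd\tilde\mu$) is supported on $E$, only heights $t=2^{-k}$ with $2^{-k}\leq\ell(Q)$ contribute to the Carleson box over $Q$; bounding $\abs{\dddd\tilde\mu}$ by $\sum_k\brkt{\int\abs{K_k(x-y)}\dd\abs{\mu}(y,t)}\delta_{2^{-k}}(t)\dd x$ and applying Tonelli's theorem, it suffices to show that
\[
	I_Q:=\sum_{k\,:\,2^{-k}\leq\ell(Q)}\int_{\R^n}\brkt{\int_Q\abs{K_k(x-y)}\dd x}\dd\abs{\mu}(y,2^{-k})\lesssim\norm{\dddd\mu}_{\mathcal{C}}\,\abs{Q}
\]
for every cube $Q$, where $\abs{\mu}$ denotes the total variation of $\mu$ on each slice. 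The first step is then to split the $y$-integration into the region $3Q$ and the dyadic annuli $A_i:=2^i(3Q)\setminus2^{i-1}(3Q)$, $i\geq1$, writing $I_Q=I_Q^{\mathrm{near}}+\sum_{i\geq1}I_Q^{(i)}$.

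For $I_Q^{\mathrm{near}}$ one uses the $y$- and $k$-uniform bound $\int_Q\abs{K_k(x-y)}\dd x\leq\int_{\R^n}2^{kn}(1+2^k\abs{z})^{-n-\delta}\dd z=C_{n,\delta}$, which reduces matters to $C_{n,\delta}\int_{3Q}\int_0^{\ell(Q)}\dd\abs{\mu}\leq C_{n,\delta}\norm{\dddd\mu}_{\mathcal{C}}\abs{3Q}=C\norm{\dddd\mu}_{\mathcal{C}}\abs{Q}$, the middle inequality being the Carleson property of $\mu$ for the cube $3Q$ (valid since $\ell(Q)\leq\ell(3Q)$).

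For the annular pieces the geometric point is that $\abs{x-y}\approx2^i\ell(Q)$ whenever $x\in Q$ and $y\in A_i$, while $2^{-k}\leq\ell(Q)$ forces $2^k\abs{x-y}\gtrsim2^i\geq2$; feeding this into the hypothesis on $K_k$ and integrating over $Q$ yields $\int_Q\abs{K_k(x-y)}\dd x\lesssim\abs{Q}\,(2^i\ell(Q))^{-n-\delta}\,2^{-k\delta}$. Using $2^{-k\delta}\leq\ell(Q)^\delta$ on the summation range and then the Carleson bound for $\mu$ on the cube $2^i(3Q)$ (whose diameter is at least $\ell(Q)\geq2^{-k}$), one obtains
\begin{align*}
	I_Q^{(i)}&\lesssim\abs{Q}\,(2^i\ell(Q))^{-n-\delta}\ell(Q)^\delta\sum_{k\,:\,2^{-k}\leq\ell(Q)}\int_{2^i(3Q)}\dd\abs{\mu}(y,2^{-k})\\
	&\lesssim\abs{Q}\,(2^i\ell(Q))^{-n-\delta}\ell(Q)^\delta\,\norm{\dddd\mu}_{\mathcal{C}}\,\abs{2^i(3Q)}\approx2^{-i\delta}\,\norm{\dddd\mu}_{\mathcal{C}}\,\abs{Q}.
\end{align*}
Summing the convergent geometric series $\sum_{i\geq1}2^{-i\delta}$ gives $I_Q\lesssim\norm{\dddd\mu}_{\mathcal{C}}\abs{Q}$ with constant depending only on $n$ and $\delta$, and taking the supremum over $Q$ completes the argument.

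The only real obstacle is in the far part: the interaction between the sum over the heights $2^{-k}$ and the need for a summable decay in the annular index $i$. A crude bound on $K_k$ delivers only a factor $2^{-in}$ per annulus, which is exactly annihilated by the volume $\abs{2^i(3Q)}\approx2^{in}\abs{Q}$ that the Carleson estimate for $\mu$ imposes. It is precisely the surplus decay $\delta>0$ in the hypothesis — surviving as the factor $2^{-k\delta}$, which is harmlessly absorbed into $\ell(Q)^\delta$ — that produces the gain $2^{-i\delta}$ and makes the series converge. One should also keep track that the Carleson property of $\mu$ is invoked only on cubes of diameter at least $\ell(Q)\geq2^{-k}$, which holds at every step.
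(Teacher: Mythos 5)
Your proof is correct, and it follows the natural cube-by-cube strategy that the cited Lemma~4.10 of \cite{Monster} also uses: fix a cube $Q$, swap the order of integration by Tonelli, split the $y$-variable into the near region $3Q$ and dyadic annuli, use $\int_Q\abs{K_k(x-y)}\dd x\leq C_{n,\delta}$ uniformly for the near part, and exploit the excess decay $\delta>0$ in $K_k$ to get the geometric gain $2^{-i\delta}$ on the far annuli after invoking the Carleson bound for $\mu$ on the dilated cubes. The key observations you make are the right ones: the restriction $2^{-k}\leq\ell(Q)$ (so that the Carleson property of $\mu$ applies to each dilated cube) and the absorption of $2^{-k\delta}\leq\ell(Q)^\delta$, which precisely cancels against $(2^i\ell(Q))^{-n-\delta}\cdot\abs{2^i(3Q)}$ to leave $2^{-i\delta}$.
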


As stated in Section \ref{sec:intro} Theorem \ref{thm:main} is proved by interpolating between certain end-point cases.  In connection to those end-point cases, the Hardy space $H^1$ and its dual $\mathrm{BMO}$ (see \cite{S} for the definitions) will play an important role. In this context the following variant of Corollary 4.12 in \cite{Monster} will be useful.
\begin{prop}
\label{cor:monster 1} Let ${\psi} \in \mathscr S(\R^n)$ be supported in an annulus and ${\phi} \in\mathscr S(\R^n)$ satisfy ${\phi}(0) = 0$. Then for any $F\in H^1$, $G\in \mathrm{BMO}$ and $v\in L^\infty_{k,x}$,
\[
	\Big|\int \sum_{k = -\infty}^\infty {\psi}(2^{-k}D) F(x)\, {\phi}(2^{-k}D) G(x)\, v(2^{-k},x)\,\dd x\Big|
	\lesssim \norm{F}_{H^1}\norm{G}_{\mathrm{BMO}}\norm{v}_{L^\infty_{k,x}}.
\]
\end{prop}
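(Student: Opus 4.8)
The plan is to reduce the bilinear-form estimate to the classical $H^1$–$\mathrm{BMO}$ duality paired with a Carleson-measure argument. The key observation is that because $\psi$ is supported in an annulus, the family $\{\psi(2^{-k}D)F\}_k$ behaves like a square function adapted to $F$, while the family $\{\phi(2^{-k}D)G\}_k$, together with the boundedness of $v$, should be recognised as producing a Carleson measure when tested against $G \in \mathrm{BMO}$. Concretely, I would first fix the notation $F_k := \psi(2^{-k}D)F$ and $G_k := \phi(2^{-k}D)G$, and rewrite the sum as
\[
	\int \sum_k F_k(x)\,G_k(x)\,v(2^{-k},x)\,\dd x.
\]

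**First I would** introduce the measure $\dddd\mu(x,t) := \sum_k \abs{G_k(x)}^2\,\delta_{2^{-k}}(t)\dd x$ and invoke the classical fact (Carleson, Fefferman–Stein; this is the well-known characterisation of $\mathrm{BMO}$ via Carleson measures, available since $\phi(0)=0$ makes $\phi(2^{-k}D)$ a genuine Littlewood–Paley-type piece) that $\norm{\dddd\mu}_{\mathcal C} \lesssim \norm{G}_{\mathrm{BMO}}^2$. Then, applying the Cauchy–Schwarz inequality in $k$ pointwise and using $\abs{v}\le\norm{v}_{L^\infty_{k,x}}$,
\[
	\Big|\int \sum_k F_k\,G_k\,v(2^{-k},\cdot)\,\dd x\Big|
	\le \norm{v}_{L^\infty_{k,x}}\int \Big(\sum_k \abs{F_k(x)}^2\Big)^{1/2}\Big(\sum_k \abs{G_k(x)}^2\Big)^{1/2}\dd x.
\]
The second factor is $\big(\tfrac{\dddd\mu}{\dd x}\big)^{1/2}$ in the informal sense; more precisely I would bound the right-hand side by first using the pointwise Cauchy–Schwarz and then estimating $\int g\,(\dddd\mu/\dd x)^{1/2}\dd x$ for $g = (\sum_k\abs{F_k}^2)^{1/2}$. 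Rather than the square function of $G$, it is cleaner to test $\mu$ directly: set $S(F)(x) := (\sum_k\abs{F_k(x)}^2)^{1/2}$, which satisfies $\norm{S(F)}_{L^1}\lesssim\norm{F}_{H^1}$ since $\psi$ is annulus-supported, and use the tent-space / Carleson duality inequality
\[
	\sum_k\int \abs{F_k(x)}\,\abs{G_k(x)}\,\dd x
	\;\le\; \sum_k\int \abs{F_k(x)}\,\abs{G_k(x)}\,\dd x,
\]
which by Cauchy–Schwarz in the pair $(F_k, G_k\,\dddd\mu^{1/2})$ — equivalently the $T^1$–$T^\infty$ pairing of Coifman–Meyer–Stein — is controlled by $\norm{S(F)}_{L^1}\norm{\dddd\mu}_{\mathcal C}^{1/2}\lesssim\norm{F}_{H^1}\norm{G}_{\mathrm{BMO}}$.

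**The main obstacle**, and the only genuinely non-routine point, is to justify that the family $\{\phi(2^{-k}D)G\}_k$ generates a Carleson measure of the claimed norm for $G\in\mathrm{BMO}$ — i.e. the square-function characterisation of $\mathrm{BMO}$ — uniformly over the discrete scales $t=2^{-k}$ and to patch this with the $L^\infty_{k,x}$ weight $v$, which is handled trivially once the measure is in place. Since the statement is phrased as "a variant of Corollary 4.12 in \cite{Monster}", I expect the cleanest route is to deduce it from that corollary by the standard device: absorb $v(2^{-k},x)$ into $G$-side using that $\abs{v}\le\norm{v}_{L^\infty_{k,x}}$ and that the bilinear form is already handled by the $H^1$-atomic decomposition of $F$ together with the $\mathrm{BMO}$ Carleson bound on $G$; on each atom of $F$ one reduces, via the support and cancellation of $\psi$, to a local sum that is estimated by the Carleson-norm control of $\sum_k\abs{\phi(2^{-k}D)G}^2\,\delta_{2^{-k}}$. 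I would therefore present the proof as: (i) atomic decomposition $F=\sum_j\lambda_j a_j$ with $\sum_j\abs{\lambda_j}\lesssim\norm{F}_{H^1}$; (ii) for a fixed atom $a$ supported in a ball $B$, split the $k$-sum into $2^{-k}\le\ell(B)$ and $2^{-k}>\ell(B)$, using the vanishing moment of $a$ and the Schwartz decay of $\psi$ in the latter regime and the Carleson bound on the $G$-square function in the former; (iii) sum over atoms. Each piece is a routine Carleson-measure estimate once set up, so I would not grind through the kernel bounds in detail.
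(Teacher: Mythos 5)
Your final paragraph (atomic decomposition of $F$, local Cauchy--Schwarz over the tent above the support of each atom, splitting of scales into $2^{-k}\leq\ell(B)$ and $2^{-k}>\ell(B)$, and the Carleson-measure bound for $\sum_k|\phi(2^{-k}D)G|^2\delta_{2^{-k}}\dd x$) is indeed the standard and correct route, and essentially matches what lies behind Corollary~4.12 of \cite{Monster}, which the paper invokes.

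The middle of your argument, however, does not work and should be discarded. Applying Cauchy--Schwarz in $k$ \emph{pointwise} replaces the bilinear form by $\int S(F)(x)\,S(G)(x)\,\dd x$, where $S(F)$ and $S(G)$ are the vertical square functions, and this is a genuinely larger quantity that is \emph{not} controlled by $\norm{F}_{H^1}\norm{G}_{\BMO}$. For example, in one dimension take $G$ a smooth truncation of $\log|x|$, so $\norm{G}_{\BMO}\approx 1$, yet $S(G)(x)\approx\sqrt{\log(1/|x|)}$ near the origin; taking $F$ to be an atom supported on $[-r,r]$ one finds $\int S(F)\,S(G)\,\dd x\approx\sqrt{\log(1/r)}\to\infty$ as $r\to0$, while the left-hand side of the proposition stays bounded. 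The $T^1_2$--$T^\infty_2$ (tent-space) duality you cite bounds $\sum_k\int|F_k||G_k|\,\dd x$ directly; it is proved precisely by applying Cauchy--Schwarz \emph{after} localising, i.e.\ on the tent over the support of a $T^1_2$- (equivalently, $H^1$-) atom, where both the $L^2$ bound on $\sum_{2^{-k}<r}\int_{2B}|F_k|^2$ and the Carleson bound on $\sum_{2^{-k}<r}\int_{2B}|G_k|^2$ are available. Applying it globally and pointwise, as in your first chain, throws away exactly the localisation the argument needs. Two further symptoms of the confusion: the intermediate sentence ``bound the right-hand side by first using the pointwise Cauchy--Schwarz and then estimating $\int g\,(\dddd\mu/\dd x)^{1/2}\dd x$'' is circular, and the displayed ``tent-space / Carleson duality inequality'' literally reads $X\leq X$ and conveys nothing. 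Begin directly with the atomic decomposition, as in your items (i)--(iii), and the proof goes through.
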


\section{Global $h^p\to h^p$ boundedness of linear FIO's for $n/(n+1)<p<1$}\label{global hp and bmo results}

In this section we establish the global $h^p$ boundedness of a class of linear FIOs. This is formulated as Theorem \ref{linearhpthm} below and will be needed to prove Theorem \ref{thm:main}. Since $H^1 \subset h^1 \subset L^1$, this result strengthens the global $H^1$ to $L^1$ boundedness obtained by Ruzhanski and Sugimoto~\cite{RuzhSug} for these FIOs. It also extends the local $h^p\to h^p$ boundedness of FIOs proven by Peloso and Secco~\cite{PS} to a global result, that is to say, we remove the requirement that the amplitude have compact $x$-support.

While this article was being written this result was generalised further to cover more general phases and Besov-Lipschitz, as well as Triebel-Lizorkin spaces. This generalisation is presented in detail in the paper of the first and third authors together with A.~Israelsson~\cite{IRS}. Therefore we concentrate here on presenting the main ideas of this result and skip some of the technical details. The interested reader can find these details in \cite{IRS}.

\begin{thm}\label{linearhpthm}
Let $m=-(n-1)\abs{\frac{1}{p}-\frac{1}{2}}$ and $\frac{n}{n+1}<p\leq\infty$. Then any linear Fourier integral operator
$$T_\sigma^{\phase}f(x)= \int_{\R^n} \sigma(x,\xi)\, e^{ix\cdot \xi +i\phase(\xi)} \widehat{f}(\xi) \ddd\xi ,$$
with an amplitude $\sigma(x,\xi)\in S^{m}_{1,0}$ and a phase function $\varphi$ \emph{(as in Definitions \ref{defn:amplitudes} and \ref{defn:linear phase})}, satisfies the estimate
\[
	\norm{T_\sigma^{\phase} f}_{X^p}\leq C\norm{f}_{X^p},
\]
where $X^p$ is defined in \eqref{defn:xp}.

\end{thm}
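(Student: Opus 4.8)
The plan is to prove the estimate first at the endpoints $p=1$ and $n/(n+1)<p<1$, and then obtain the remaining exponents by duality and interpolation. Observe first that $T^{\phase}_\sigma f=\sigma(x,D)\bigl(e^{i\phase(D)}f\bigr)$, where $e^{i\phase(D)}$ is the Fourier multiplier with symbol $e^{i\phase(\xi)}$; the case $p=2$, where $m=0$, is then immediate, since $e^{i\phase(D)}$ is an $L^2$-isometry and $\sigma(x,D)$ is $L^2$-bounded by the Calder\'on--Vaillancourt theorem. The adjoint of $T^{\phase}_\sigma$ is again a Fourier integral operator of the same type, with phase $-\phase$ (still a phase in the sense of Definition \ref{defn:linear phase}) and amplitude of the same order (up to the routine reduction of a right- to a left-quantised amplitude); hence the case $p=\infty$, i.e. $\bmo\to\bmo$ boundedness, will follow from the $h^1\to h^1$ estimate by duality. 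For $1<p<2$ one interpolates, via Stein's analytic interpolation theorem applied to the family obtained by inserting a factor $\langle\xi\rangle^{z}$ into the amplitude, between the endpoints ``$h^1\to h^1$ at order $-(n-1)/2$'' and ``$L^2\to L^2$ at order $0$'', using the identification $(h^1,L^2)_\theta=L^p$ coming from $h^p=F^{0}_{p,2}$ \cite{Triebel}; the range $2<p<\infty$ is handled symmetrically between $L^2$ and $\bmo$. It therefore suffices to establish the $h^p\to h^p$ bound for $n/(n+1)<p\leq 1$ with $m=-(n-1)(\tfrac{1}{p}-\tfrac{1}{2})$.

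Fix such a $p$ and split $\sigma=\sigma\,\psi_0(D_\xi)+\sigma\,(1-\psi_0(D_\xi))$ into a low- and a high-frequency part. For $T^{\phase}_{\sigma\psi_0(D_\xi)}$, Lemma \ref{main low frequency estim} applied for each fixed $x$ (the constants there being uniform in $x$ because $\sigma\in S^{m}(n,1)$) shows that its Schwartz kernel $K_0$ satisfies $|K_0(x,y)|\lesssim_\varepsilon\langle x-y\rangle^{-n-\varepsilon}$ for every $\varepsilon\in[0,1)$. Since $p>n/(n+1)$ we may choose $\varepsilon\in(0,1)$ and then $r$ with $n/(n+\varepsilon)<r<p$; after replacing $f$ by $\Psi(D)f$ with $\Psi\in\mathcal C^\infty_c$ equal to $1$ on the $\xi$-support of $\sigma\psi_0$, which has Fourier support in a fixed ball and satisfies $\|\Psi(D)f\|_{L^p}\lesssim\|f\|_{h^p}$, Lemma \ref{peetre lemma} yields the pointwise bound $|T^{\phase}_{\sigma\psi_0(D_\xi)}f(x)|\lesssim\bigl(M(|\Psi(D)f|^{r})(x)\bigr)^{1/r}$. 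The $L^{p/r}$-boundedness of $M$, together with the fact that the output is again (up to rapidly decaying tails) frequency-localised near a fixed ball, then gives $\|T^{\phase}_{\sigma\psi_0(D_\xi)}f\|_{h^p}\lesssim\|f\|_{h^p}$. This is the only place the hypothesis $p>n/(n+1)$ enters, and it is sharp, since $\varepsilon<1$ is forced by the conic singularity of $\phase$ at the origin.

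The main work is the high-frequency part $T^{\phase}_{\sigma(1-\psi_0(D_\xi))}=\sum_{j\geq 1}T^{\phase}_{\sigma_j}$ with $\sigma_j=\sigma\,\psi_j(D_\xi)\in S^{m}(n,1)$ supported in $|\xi|\sim 2^{j}$. Since $\phase$ is positively homogeneous of degree one, $\nabla\phase$ is homogeneous of degree zero, hence bounded; a non-stationary phase argument then shows that the Schwartz kernel of $T^{\phase}_{\sigma(1-\psi_0(D_\xi))}$ decays rapidly off the strip $\{|x-y|\lesssim 1\}$, so the operator is essentially local at unit scale. Following Ruzhansky--Sugimoto's globalisation procedure \cite{RuzhSug}, this almost-locality allows one to reduce the global $h^p\to h^p$ estimate to a uniform family of estimates over a covering of $\R^n$ by unit cubes, the uniformity in $x$ of the $S^{m}(n,1)$-seminorms of $\sigma$ being what makes the reduction work. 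Each local estimate is then proved by extending the Seeger--Sogge--Stein argument \cite{SSS}, as in the local $h^p$ analysis of Peloso--Secco \cite{PS}: one decomposes $\sigma_j=\sum_\nu\sigma_j^\nu$ into $\sim 2^{j(n-1)/2}$ pieces supported on angular plates of dimensions $\sim 2^{j}\times(2^{j/2})^{n-1}$; the kernel of $T^{\phase}_{\sigma_j^\nu}$ is concentrated, up to rapidly decaying tails, on the dual plate translated by $\nabla\phase$ of the plate's central direction, so $T^{\phase}_{\sigma_j^\nu}$ maps an $h^p$-atom to an $h^p$-molecule adapted to that dual plate. Summing over $\nu$ (using the almost orthogonality of the plates) and then over $j$ converges precisely because $2^{jm}=2^{-j(n-1)(1/p-1/2)}$ exactly balances the number and the measure of the plates; combined with the low-frequency bound this completes the $h^p$ estimate, and the $p=\infty$ and $1<p<\infty$ cases follow as in the first paragraph.

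I expect the main obstacle to be the interplay of the two summations in the high-frequency part: because $\sigma$ carries no decay in $x$, one cannot simply sum the local estimates in $\ell^1$ over the unit cubes, and one must interlock the almost-local structure coming from $\|\nabla\phase\|_\infty<\infty$ with the almost orthogonality of the second-dyadic plates so that both the sum over $\sim 2^{j(n-1)/2}$ plates at the critical order $m$ and the sum over the unit cubes stay under control, uniformly down to $p$ arbitrarily close to $n/(n+1)$. This requires careful bookkeeping of the atoms and molecules and of the overlap of the (dual) plates; since the present paper only uses the case $p=1$, we will present the main ideas here and refer to \cite{IRS} for the full technical details.
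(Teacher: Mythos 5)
Your architecture is correct and matches the paper's: reduce to $p\in(n/(n+1),1]$, split $\sigma$ into low and high frequency, bound the low-frequency part via the kernel estimate of Lemma~\ref{main low frequency estim} and Peetre's inequality (Lemma~\ref{peetre lemma}), and bound the high-frequency part by combining the Seeger--Sogge--Stein decomposition with the Ruzhansky--Sugimoto globalisation. You also correctly identify $p>n/(n+1)$ as being forced only by the low-frequency conic singularity. Two places, however, diverge meaningfully from what the paper actually does, and the second one is where your proposal is weakest.

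First, the paper opens by factorising $T^\phase_\sigma = b(x,D)\circ T^\varphi_{\widetilde\sigma}$ with $b(x,\xi)=\sigma(x,\xi)\langle\xi\rangle^{-m}\in S^0$ and $\widetilde\sigma(\xi)=\langle\xi\rangle^m$ \emph{independent of $x$}, and then runs the entire argument for the $x$-independent operator. This does a lot of work at once: the $L^2$ bound is immediate (a bounded multiplier composed with $e^{i\phase(D)}$), the adjoint has literally the same form with phase $-\phase$ so the $p=1\Leftrightarrow p=\infty$ duality is trivial, and the low-frequency kernel lemma applies as stated rather than ``for each fixed $x$.'' Your factorisation $\sigma(x,D)\circ e^{i\phase(D)}$ only serves the $p=2$ step, and for duality you have to invoke a right-to-left quantisation conversion for the $x$-dependent adjoint. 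This is not wrong, but it repeats work the single reduction would have absorbed; I would adopt the paper's reduction.

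Second, and more substantively: your high-frequency globalisation does not match the paper's, and it is where your sketch stops. The paper does \emph{not} localise the operator to unit cubes; it works entirely with the atomic decomposition, treating atoms by radius. For $r\leq1$ it runs the Seeger--Sogge--Stein argument (which genuinely uses the moment conditions available for $r\leq1$ in the estimates \eqref{eq:operatorestimate Tnuj}), and for $r>1$ it applies Ruzhansky--Sugimoto directly: with $H(z)=\inf_\xi|z+\nabla\phase(\xi)|$ and $\Delta_r=\{H\geq r\}$ one has $|K(z)|\lesssim H(z)^{-L}$, $H(x)\leq 2H(x-y)$ for $x\in\Delta_{2r}$, $|y|\leq r$, and $|\R^n\setminus\Delta_{2r}|\lesssim r^n$, which give the $L^p$ bound on $\Delta_{2r}$ and its complement separately, in a handful of lines. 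Describing Ruzhansky--Sugimoto as ``reduce to a covering by unit cubes'' misrepresents the mechanism, and the cube-covering alternative you sketch instead is exactly where the bookkeeping you flag in your last paragraph becomes nontrivial: the unit-cube pieces of a large atom are not $h^p$-atoms (no moment conditions at radius $\leq1$), the sum over $\sim r^n$ cubes at the $p$-th power has to be controlled against the tail contributions of the kernel, and SSS as written cannot be applied to those pieces without reworking its input. The paper's radius dichotomy plus the $H$-function avoids all of this. I would recommend replacing the cube-covering step by the atomic radius split and the $H$-function argument, which is both what \cite{RuzhSug} actually does and what keeps the proof from needing the ``interlocking'' you correctly identify as a danger.
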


We begin the proof of Theorem \ref{linearhpthm} by reducing to the case of $x$-independent amplitudes and $p < \infty$. We can write
\m{
T_\sigma^{\phase}f(x)= b(x,D)T^{\varphi}_{\widetilde{\sigma}} f(x),
}
where $b(x,\xi) = \sigma(x,\xi)\langle \xi\rangle^{-m}\in S^{0}$ and $\widetilde{\sigma} = \langle \xi\rangle^m \in S^m$ is independent of $x$. Since pseudodifferential operators $b(x,D)$ are bounded on $X^p$ (see \cite{Gol} for the case $p\leq1$ and $p=\infty$, and, for example, \cites{S} for $1<p<\infty$) to prove Theorem \ref{linearhpthm} we only need to prove the boundedness of $T^{\varphi}_{\widetilde{\sigma}}$. Since $T^{\varphi}_{\widetilde{\sigma}}$ is a self-adjoint operator, duality implies that the $p=\infty$ case follows immediately from the $p=1$ case. To avoid unnecessarily cumbersome notation, for the rest of the proof we drop the tilde and assume $\sigma$ only depends on $\xi$.

Next we observe that the $L^2$ boundedness of $T^\phase_\sigma$ is obvious when $\sigma$ does not depend on $x$, since it is a Fourier multiplier with bounded symbol (observe that $m\leq 0$). Therefore, we only need to consider $p \in (n/(n+1),1]$, since once the theorem is proved for these values of $p$, the others follow by interpolation and duality. 

We now split the operator into high and low frequency portions. Let $\chi(\xi)$ be a smooth cut-off function supported in the ball $B(0,1)$ and equal to one in $B(0,1/2)$. We set
\m{
\sigma_1 := \chi(\xi)\sigma(\xi), \quad \mbox{and} \quad \sigma_2(\xi):= (1-\chi(\xi))\sigma(\xi),
}
so $\sigma = \sigma_1 + \sigma_2$. We shall study the boundedness of $T_{\sigma_1}^{\phase}$ and $T_{\sigma_2}^{\phase}$ separately
% It turns out that our analysis of the high-frequency portion works only in dimensions two and higher, which forces us to consider $n=1$ as an additional separate case. 
and begin with the estimates for $T_{\sigma_1}^\phase$.
%%%%%%%%%%%%%%%%%%%%%%%%%%%%%%%%%%%%%%%%%%%%%%%%%
%%%%%%%%%%%%%%%%%%%%%%%%%%%%%%%%%%%%%%%%%%%%%%%%%
\subsection{Low frequency analysis}\label{sec:lowfreqlinearhp}

Our goal is to show that $T_{\sigma_1}^{\varphi}$ is bounded on $h^p$ for $\frac{n}{n+1}<p<\infty$. For this we make use of the characterisation \ref{defn:hpsquarefunction} in Definition \ref{def:Triebel} and let $\psi_{j}$ be a standard Littlewood-Paley partition of unity introduced in Definition \ref{def:LP}.

Clearly the operator $\psi_j(D)T^{\varphi}_{\sigma_1}$ is an FIO with amplitude $$r_j(\xi)= \psi_{j}(\xi)\sigma_1(\xi) = \psi_{j}(\xi)\chi(\xi) \sigma(\xi)$$ and phase function $x\cdot\xi +\varphi(\xi)$. The support properties of $\psi_j$ and $\chi$ imply that $r_j(\xi)=0$ for $j\geq 1$. This yields that
\m{
\norm{T_{\sigma_1}^{\phase}f}_{h^p} = \norm{\left(	\sum_{j=0}^\infty|T_{r_j}^{\phase}f|^{2} \right)^{\frac{1}{2}}}_{L^p} = \norm{T_{r_0}^{\phase}f}_{L^p}.
}
We can write
$$T_{r_0}^{\phase}f(x)= \int K(x,y) (\psi_0 (D) f)(y)\, \dd y, $$
where $K(x,y)=\int \chi (\xi)\, \sigma (\xi)\, e^{i(x-y)\cdot\xi +i\varphi(\xi)}\, \ddd \xi$. By Lemma \ref{main low frequency estim}, one has that $|K(x,y)|\lesssim \langle x-y \rangle^{-n-\varepsilon}$ for all $\varepsilon \in [0,1)$. Using this and Lemma \ref{peetre lemma} yields that
 \begin{equation}\label{poitwise estimate for the lowfrequency part}
 |T_{r_0}^{\phase}f(x)|\lesssim |(\psi_0 (D) f) \ast \langle \cdot \rangle^{-n-\varepsilon}|\lesssim \big( M(|\psi_0 (D) f|^r) (x)\big)^{1/r}
 \end{equation}
 for all $f\in \mathscr{S}$, $r\in (\frac{n}{n+\varepsilon},1)$ and $\varepsilon \in (0,1)$, where $M$ is the Hardy-Littlewood maximal function.
 
Thus, by choosing $\frac{n}{n+1}<r<p$ and making use of the boundedness of $M$ on $L^{p/r}$ we obtain
\m{
\norm{T_{\sigma_1}^{\phase}f}_{h^p}=\Vert T^{\varphi}_{r_0} f\Vert_{L^p} \lesssim \Vert M(|\psi_0 (D) f|^r)\Vert_{L^{p/r}}^{1/r} \lesssim \Vert \psi_0 (D) f\Vert_{L^{p}} \lesssim \Vert f\Vert_{h^p},
}
where the last inequality follows by \ref{defn:hpsquarefunction} in Definition \ref{def:Triebel}. A standard density argument yields the result.

%%%%%%%%%%%%%%%%%%%%%%%%%%%%%%%%%%%%%%%%%%%%%%%%%
%%%%%%%%%%%%%%%%%%%%%%%%%%%%%%%%%%%%%%%%%%%%%%%%%
\subsection{High frequency analysis}

To analyse $T^\phase_{\sigma_2}$ we need to use the atomic characterisation/decomposition of local Hardy spaces, that is \ref{defn:hpatomic} of Definition \ref{def:Triebel}. It is also worth mentioning that the high frequency case of the proof does not require the restriction $p \in ({n}/{n+1}, 1)$ and works for all $p\in(0,1)$. Indeed, it is the lack of smoothness in the low frequency part of the operator that leads to the counter-example in Section \ref{Sharpness}.

We can make a further reduction and replace the target space $h^p$ with the larger space $L^p$ by using the characterisation \ref{defn:hppelososecco} from Definition \ref{def:Triebel}. This characterisation states that it is enough to show that $r_{\varepsilon}^{\alpha}(D)\circ T^{\varphi}_{\sigma_2}$ and $\psi_0(D)\circ T^{\varphi}_{\sigma_2}$ both map $h^p$ to $L^p$, with the norm of the former uniform in $\varepsilon$. But this follows at once from the facts that $r_{\varepsilon}^{\alpha}(D)$ and $\psi_0(D)$ are pseudodifferential operators with symbols in $S^0$ (uniformly in $\varepsilon$) and $\cap_{\mu \leq 0} S^{\mu}$ respectively, and $r_{\varepsilon}^{\alpha}(D)T^{\varphi}_{\sigma_2}= T^{\varphi}_{\sigma_2}r_{\varepsilon}^{\alpha}(D) $ and $\psi_0 (D)T^{\varphi}_{\sigma_2}= T^{\varphi}_{\sigma_2}\psi_0 (D).$

%%%%%%%%%%%%%%%%%%%%%%%%%%%%%%%%%%%%%%%%%%%%%%%%%
\subsubsection{Estimates of the norm on small balls}\label{sec:smallballs}

We introduce a second frequency decomposition to the Littlewood-Paley decomposition of Definition \ref{def:LP}. This was inspired by the work of C.~Fefferman~\cite{Feffer} and famously used by A.~Seeger, C.~Sogge and E.~Stein in \cite{SSS}. This section follows closely the same line of thought as \cite{SSS}, in which each Littlewood-Paley shell $\{\xi \colon 2^{j-1}\leq \vert \xi\vert\leq 2^{j+1}\}$ is further partitioned into $O(2^{j(n-1)/{2}})$ truncated cones of thickness $2^{j/2}$, and a clear exposition of the claims made below can be found in \cite{S}*{pp.~402--12}.

For each $j\in\N$ we choose a collection of unit vectors $\{\xi^{\nu}_{j}\}_\nu$ such that
\begin{itemize}
\item $\displaystyle \big | \xi^{\nu}_{j}-\xi^{\nu'}_{j} \big |\geq 2^{-\frac{j}{2}}$ for $\nu\neq \nu'$, and
\item for each  $\xi\in\mathbb{S}^{n-1}$, there exists a $\xi^{\nu}_{j}$ such that $\big \vert \xi -\xi^{\nu}_{j} \big \vert <2^{-{j}/{2}}$,
\end{itemize}
which is maximal with respect to the first property. It follows that it contains at most $O(2^{j(n-1)/2})$ elements. Associated to each $\xi_j^\nu$ is a cone
\begin{equation*}\label{eq:gammajnu}
    \Gamma^{\nu}_{j}:=\set{ \xi\in\R^n :\,  \abs{ \frac{\xi}{\vert\xi\vert}-\xi^{\nu}_{j}}\leq 2\cdot 2^{-\frac{j}{2}}}.
\end{equation*}
whose central axis lies along $\xi_j^\nu$.

One can construct a partition of unity
\begin{equation}\label{the first partition of unity}
\sum _{\nu}\chi^{\nu}_{j} =1
\end{equation}
of $\R^n\setminus\set{0}$ subordinate to $\{\Gamma^{\nu}_{j}\}_{j,\nu}$ which satisfies the estimates
\nm{eq:quadrseconddyadcond}{
\abs{\d_\xi^\alpha \chi_j^\nu (\xi) }\leq C_{\alpha} 2^{j\frac {\abs \alpha}2}\abs\xi^{-\abs\alpha}
}
for all multi-indices $\alpha$, and the better estimate
\nm{eq:quadrseconddyadcond2}{
\left \vert (\xi_j^\nu\cdot \nabla)^{N}\chi^{\nu}_{j}(\xi) \right \vert\leq C_{N}
\vert \xi\vert ^{-N},
}
for $N\geq 1$ along the direction $\xi_j^\nu$. Therefore, with $\psi_j$ from Definition \ref{def:LP},
\begin{equation}\label{PL partition of unity}
\psi_{0}(\xi)+\sum_{j=1}^{\infty}\sum_{\nu}\chi_{j}^{\nu}(\xi)\psi_{j}(\xi)=1, \quad \text{for all } \xi\in \mathbb{R}^{n}.
\end{equation}

We now fix an $h^p$-atom $a$ supported in a ball $B(\overline{y}, r)$ with $r\leq 1$. We need to show that $\Vert Ta\Vert_{L^p} \leq C$, where the constant $C$ does not depend on the atom $a$ or the radius of its support $r$. To do this we introduce the rectangles
$$
R_{j}^{\nu}=\set{x\in\ \R^n\ :\ |x-\bar y+\nabla_{\xi}\varphi(\xi_{j}^{\nu})|\leq A 2^{-\frac j2},\ |\pi_{j}^{\nu}(x-\bar y+\nabla_{\xi}\varphi(\xi_{j}^{\nu}))|\leq A 2^{-j}},
$$
where $\pi_j^\nu$ is the orthogonal projection in the direction $\xi_{j}^{\nu}$, and the size of the constant $A$ depends on the size of the Hessian $\partial^2_{\xi\xi}\varphi$ but not on $j$, and define the ``region of influence" as
\begin{equation*}\label{defn:Bstar}
B^{*}=\bigcup_{2^{-j}\leq r}\bigcup_{\nu}R_{j}^{\nu}.
\end{equation*}
We then split
\begin{equation}\label{eq: spliting the FIO on balls}
  \int_{\mathbb{R}^{n}}|T^\phase_{\sigma_2} a(x)|^{p}\mathrm{d}x=\int_{B^{*}}|T^\phase_{\sigma_2} a(x)|^{p}\mathrm{d}x+\int_{B^{*c}}|T^\phase_{\sigma_2}a(x)|^{p}\mathrm{d}x,
\end{equation}
It can be shown that
\m{
|B^{*}|\lesssim r,
}
so
\begin{equation} \label{eq:onsetofinfluence}
\int_{B^{*}}|T^\phase_{\sigma_2} a(x)|^{p}\mathrm{d}x\leq|B^{*}|^{1-p/2}\brkt{\int_{B^{*}}|T^\phase_{\sigma_2} a(x)|^{2}\mathrm{d}x}^{p/2}
\lesssim r^{(1-p/2)}\Vert T^\phase_{\sigma_2} a\Vert_{L^{2}}^{p}.
\end{equation}
To estimate $\norm{T^\phase_{\sigma_2} a}_{L^{2}}^{p}$ we consider two cases: $-{n}/{2}<m\leq0$; and $m\leq {-n}/{2}$.

In the case $-{n}/{2}<m\leq0$ we can fix $q \in (1,2]$ which satisfies
\nm{eq:rieszpotentialexponents}{
\frac{1}{2}=\frac{1}{q}+\frac{m}{n}.
}
Using the $L^2$-boundedness of $T^\phase_{\sigma_2}\circ\langle D \rangle^{-m}$ (which is clear when viewed as a zeroth-order Fourier multiplier) and the $L^q$ to $L^2$ boundedness of the Riesz potential $\langle D \rangle^{m}$, we obtain
$$
\Vert T^\phase_{\sigma_2}a\Vert_{L^{2}}^{p}\lesssim\Vert a\Vert_{L^{q}}^{p}\lesssim c|B|^{p/q -1}\lesssim r^{n(p/q -1)}.
$$
Combining this with \eqref{eq:onsetofinfluence} we obtain
$$
\int_{B^{*}}| T^\phase_{\sigma_2}a(x)|^{p}\mathrm{d}x\lesssim r^{(1-p/2)+n(p/q -1)} \lesssim 1,
$$
where the last estimate follows from \eqref{eq:rieszpotentialexponents} since then
\ma{
1-\frac{p}{2}+n\Big(\frac{p}{q} -1\Big) &= 1-\frac{p}{2}+n\Big(\frac{p}{2}-\frac{pm}{n}-1\Big)= p\Big(\frac{1}{p}-\frac{1}{2}+\frac{n}{2}-m-\frac{n}{p}\Big)\\
&= p\Big[-(n-1)\Big(\frac{1}{p}-\frac{1}{2}\Big)-m \Big] = 0.
}

If instead $m\leq \frac{-n}{2},$ then by setting $b=|B|^{1/p-1/q}a$, with $q$ once again satisfying \eqref{eq:rieszpotentialexponents} (so now $q < p < 1$) we see that $b$ is a $h^q$-atom which is also supported in $B$. In fact, since $r \leq 1$, $b$ is even an atom in $H^{q}$, so by Corollary 2.3 in \cite{Krantz}, we have that $T^\phase_{\sigma_2}$ is bounded from  $H^{q}$ to $L^{2}$, and so
\begin{equation*}
\int_{B^{*}}|T^\phase_{\sigma_2} a(x)|^{p}\mathrm{d}x\lesssim r^{(1-p/2)}\Vert a\Vert_{H^{q}}^{p}
\lesssim r^{(1-p/2)}|B|^{(1/q-1/p)p}\Vert b \Vert_{H^{q}}^{p}
\lesssim r^{(1-p/2)+n(p/q-1)} \lesssim 1
\end{equation*}
once again.

To analyse the second term on the right-hand side of \eqref{eq: spliting the FIO on balls} we use the partition of unity \eqref{PL partition of unity} and decompose
\begin{equation*}\label{defn of sss pieces of T}
 T_{\sigma_{2}}^{\varphi}=\sum_{j=0}^{\infty}T_{j}=\sum_{j}\sum_{\nu}T_{j}^{\nu},
\end{equation*}
where $T_{j}^{\nu}$ is the operator with kernel
\m{
K_j^\nu(x,y) = \int \sigma(\xi)\chi_{j}^{\nu}(\xi)\psi_{j}(\xi)e^{i(x-y)\cdot\xi + i\phase(\xi)} \ddd\xi.
}
Since $\phase$ is homogeneous of degree one we can write $\phase(\xi) = \nabla\phase(\xi)\cdot\xi$ and so
\m{
(x-y)\cdot\xi + \phase(\xi) = \brkt{x-y+\nabla\phase(\xi_j^\nu)}\cdot\xi + \brkt{\nabla\phase(\xi)-\nabla\phase(\xi_j^\nu)}\cdot\xi.
}
Just as in \cite{SSS}, the kernel can therefore be written as
\m{
K_j^\nu(x,y) = \int b_{j}^\nu(\xi)e^{i\brkt{x-y+\nabla\phase(\xi_j^\nu)}\cdot\xi} \ddd\xi.
}
where $b_j^\nu(\xi) := \sigma(\xi)\chi_{j}^{\nu}(\xi)\psi_{j}(\xi)e^{i\brkt{\nabla\phase(\xi)-\nabla\phase(\xi_j^\nu)}\cdot\xi}$ satisfies the estimates
\m{
\abs{\d^\alpha b_j^\nu (\xi) }\leq C_{\alpha} 2^{-j\frac {\abs \alpha}2} \quad \mbox{and} \quad \left \vert (\xi_j^\nu\cdot \nabla)^{\NNN}b^{\nu}_{j}(\xi) \right \vert\leq C_{\NNN}
2^{-j\NNN/2},
}
for all multi-indices $\alpha$ and for $\NNN\geq 1$, in a similar way to \eqref{eq:quadrseconddyadcond} and \eqref{eq:quadrseconddyadcond2}. This leads to the kernel estimate
\nm{eq:claimkernel}{
|{\partial_y^\alpha}K_{j}^{\nu}(x, y)|  \lesssim \frac{2^{j\brkt{m+\frac{n+1}{2}+\abs\alpha}}}{\left  (1+\big|2^{j}\pi_j^\nu(x- y+\nabla_{\xi}\varphi(\xi_{j}^{\nu}))\big|^{2}\right )^{\NNN} \left (1+\big |2^{\frac j2}(x- y+\nabla_{\xi}\varphi(\xi_{j}^{\nu}))'\big |^{2}\right )^{\NNN}}
}
for all multi-indices $\alpha$ and $\NNN\geq 0$, where $x'$ denotes $x-\pi_j^\nu(x)$, the orthogonal complement to the projection in the direction $\xi_j^\nu$. (See Lemma 3.2 in \cite{IRS} for the details.)

To make use of this decomposition we estimate the second term on the right-hand side of \eqref{eq: spliting the FIO on balls} by
\begin{equation}\label{Tphisigma2 over BC}
  \int_{B^{*c}}|T_{\sigma_{2}}^{\varphi}   a(x)|^p \, \dd x\leq \sum_{2^{j}<r^{-1}}\int_{B^{*c}}|T_{j}a(x)|^p \, \dd x +\sum_{2^{j}\geq r^{-1}}\int_{B^{*c}}|T_{j}a(x)|^p \,\dd x  .
\end{equation}
From \eqref{eq:claimkernel} it is possible to prove for $x\in B^{*c}$, any $\NNN$ and any $M\geq \left\lfloor n\brkt{\frac 1p -1}_+\right\rfloor$, one has the pointwise estimates 
  \begin{equation}\label{eq:operatorestimate Tnuj}
  \abs{T_j^\nu a(x)}\lesssim 
      \begin{cases}
       \frac{ 2^{j\brkt{m+\frac{n+1}2}}2^{j M}r^{ M}r^{n-\frac np}   }{\left (1+\big|2^{j}(x- \bar y+\nabla_{\xi}\varphi(\xi_{j}^{\nu}))_1\big|^{2}\right )^{\NNN} \left (1+\big|2^{\frac j2}(x- \bar y+\nabla_{\xi}\varphi(\xi_{j}^{\nu}))'\big|^{2}\right )^{\NNN}}\,,\, \qquad 2^j<r^{-1}\\
      \frac{2^{j\brkt{m+\frac {n+1}2}}  2^{-jM }r^{-M}r^{n-\frac np} 2^{4j\NNN}r^{4\NNN}}{\left (1+\big|2^{j}(x- \bar y+\nabla_{\xi}\varphi(\xi_{j}^{\nu}))_1\big|^{2}\right )^{\NNN}  \left (1+\big|2^{\frac j2}(x- \bar y+\nabla_{\xi}\varphi(\xi_{j}^{\nu}))'\big|^{2}\right )^{\NNN}}\, ,\,\qquad 2^j\geq r^{-1}
      \end{cases}
  \end{equation}
(See Lemma 3.4 in \cite{IRS} for the details.) For the first term on the right-hand side of \eqref{Tphisigma2 over BC} we use the first estimate of \eqref{eq:operatorestimate Tnuj} to deduce
\begin{equation*}
\int_{B^{*c}}|T_{j}a(x)|^p \dd x \lesssim 2^{j\frac{n-1}{2}} 2^{jp\brkt{m+\frac{n+1}2}}2^{j Mp}2^{-j\frac{n+1}2}r^{Mp}r^{np-n}.
\end{equation*}
(See Proposition 6.2 in \cite{IRS} for details.) Summing over $2^{j}<r^{-1} $ yields 
\m{\sum_{2^{j}<r^{-1}}\int_{B^{*c}}|T_{j}a(x)|^p \dd x \lesssim 1}
if $M$ and $\NNN$ are chosen appropriately. For the second term in \eqref{Tphisigma2 over BC} we have that $2^{j}\geq r^{-1}$, therefore using the second estimate of \eqref{eq:operatorestimate Tnuj} 
yields
\begin{equation*}
\int_{B^{*c}}|T_{j}a(x)|^p \dd x \lesssim 2^{j\frac{n-1}{2}}2^{jp\brkt{m+\frac {n+1}2}}  2^{-jMp }2^{-j\frac{n+1}{2}}r^{-Mp}r^{np-n} 
2^{4j\NNN p}r^{4\NNN p},
\end{equation*}
(See once again Proposition 6.2 in \cite{IRS} for details.) Summing over $2^{j}\geq r^{-1} $ yields 
\m{\sum_{2^{j}\geq r^{-1}}\int_{B^{*c}}|T_{j}a(x)|^p \dd x \lesssim 1}
for appropriate $M$ and $\NNN$, which concludes the proof for atoms supported on balls of radius less than or equal to one.

%%%%%%%%%%%%%%%%%%%%%%%%%%%%%%%%%%%%%%%%%%%%%%%%%
\subsubsection{Estimates of the norm on large balls}
When the atom is supported on a ball with radius greater than one we use a strategy developed by Ruzhanksy and Sugimoto~\cite{RuzhSug}. Once again we wish to show $\norm{T^\phase_{\sigma_2}a}_{L^p} \lesssim 1$, where $a$ is an atom, but this time supported on a ball of radius $r\geq1$. Without loss of generality, one can assume that this ball is centred at the origin. This is because the translation invariance of $L^p$ yields $\norm{T^\phase_{\sigma_2} a}_{L^p}= \norm{\tau_s^* T^\phase_{\sigma_2}\tau_s \tau_{-s} a}_{L^p}$, where $\tau_s$ is the operator of translation by $s\in\R^n$, and $\tau_s^* T^\phase_{\sigma_2}\tau_s$ is exactly the same operator as $T^\phase_{\sigma_2}$.

Following \cite{RuzhSug}, one introduces the function
\begin{equation}\label{defn:RS H function}
  H(z):=\inf_{\xi\in\R^n}\abs{z+\nabla\phase(\xi)},
\end{equation}
and its associated level sets
\[
\Delta_r:=\{z\in\R^n;\, H(z)\geq r\}.
\]
Clearly for $r_1\leq r_2$ one has  $\Delta_{r_1}\supseteq \Delta_{r_2}$ and
setting
\begin{align*}
&\MM:=\sum_{|\gamma|\leq n+1}\sup_{x,y,\xi\in\R^n}
|\partial^\gamma_\xi \sigma_2(\xi)\,\bra{\xi}^{{-m_c (p)}+|\gamma|}|,
\\
&\NN:=\sum_{1\leq|\gamma|\leq n+2}\sup_{\xi\in\R^n}
|\partial^\gamma_\xi\phase(\xi)\,\bra{\xi}^{-1+|\gamma|}|,
\end{align*}
it is easy to check that both $\MM$ and $\NN$ are finite due to the decay, support and homogeneity properties of $\sigma$ and $\phase$.

The following Lemmas \ref{Lem:outside RuzhSug} and \ref{Lem:kernel RuzhSug}, are special cases of Theorem 2.2  in \cite{RuzhSug}.
\begin{lem}\label{Lem:outside RuzhSug}
Let $r\geq1$.
Then we have
$\R^n\setminus\Delta_{2r}\subseteq\set{z:|z|< (2+\NN)\,r}$. Furthermore for $x\in\Delta_{2r}$ and $|y|\leq r$ one has
\begin{equation*}\label{H bounded by H}
 H(x)\leq 2H(x-y)
\end{equation*}
and therefore $x-y\in\Delta_r$
\end{lem}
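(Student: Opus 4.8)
The plan is to exploit two elementary facts: first, that the homogeneity of degree one of $\phase$ forces $\nabla\phase$ to be homogeneous of degree zero, hence bounded, with the bound controlled by $\NN$; and second, that the level-set condition $x\in\Delta_{2r}$ gives a lower bound $H(x)\ge 2r$ which can be traded against the displacement $|y|\le r$.

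First I would record that $|\nabla\phase(\xi)|\le \NN$ for every $\xi\ne 0$. Indeed, the terms in the definition of $\NN$ with $|\gamma|=1$ read $\sup_\xi|\partial_j\phase(\xi)\,\bra\xi^{0}|=\sup_\xi|\partial_j\phase(\xi)|$, and summing over $j$ gives $\sum_j\sup_\xi|\partial_j\phase(\xi)|\le\NN$, so $|\nabla\phase(\xi)|\le\sum_j|\partial_j\phase(\xi)|\le\NN$. (This is where homogeneity is essential: $\nabla\phase$ being homogeneous of degree zero is what makes this supremum finite.)

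For the first inclusion, take $z\notin\Delta_{2r}$, so $H(z)=\inf_\xi|z+\nabla\phase(\xi)|<2r$; pick $\xi$ with $|z+\nabla\phase(\xi)|<2r$, and then
\[
|z|\le |z+\nabla\phase(\xi)|+|\nabla\phase(\xi)|< 2r+\NN\le 2r+\NN r=(2+\NN)r,
\]
using $r\ge1$ in the last step. For the second part, fix $x\in\Delta_{2r}$ and $|y|\le r$. For any $\xi$, $|x+\nabla\phase(\xi)|\le |x-y+\nabla\phase(\xi)|+|y|$, and taking the infimum over $\xi$ gives $H(x)\le H(x-y)+|y|\le H(x-y)+r$. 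Since $x\in\Delta_{2r}$ means $H(x)\ge 2r$, i.e.\ $r\le H(x)/2$, we get $H(x)\le H(x-y)+H(x)/2$, hence $H(x)\le 2H(x-y)$. Consequently $H(x-y)\ge H(x)/2\ge r$, so $x-y\in\Delta_r$, completing the proof.

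There is no real obstacle here; the only thing to be careful about is the bookkeeping that extracts $|\nabla\phase|\le\NN$ from the definition of $\NN$ and the repeated, but harmless, use of $r\ge1$. The argument is purely the triangle inequality combined with the definition of the level sets $\Delta_r$.
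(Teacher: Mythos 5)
Your proof is correct. Note that the paper does not actually prove this lemma: it is simply quoted as a special case of Theorem 2.2 of \cite{RuzhSug}, so there is no internal argument to compare against. What you have written is precisely the elementary computation underlying that citation: the $|\gamma|=1$ terms in the definition of $\NN$ give $|\nabla\phase(\xi)|\leq \NN$ for $\xi\neq 0$ (finite because $\nabla\phase$ is positively homogeneous of degree zero, hence determined by its values on the sphere), and then both assertions follow from the triangle inequality, the definition of $H$ and $\Delta_r$, and the harmless use of $r\geq 1$ to absorb $\NN$ into $(2+\NN)r$ and $|y|\leq r$ into $H(x)/2$. In effect you have supplied the omitted details rather than taken a different route. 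The only pedantic point is that the infimum defining $H$ should be read over $\xi\neq 0$, where $\nabla\phase$ is defined; this changes nothing in the argument, since every step only uses the bound $|\nabla\phase(\xi)|\leq\NN$ at points where the gradient exists and the passage to the infimum in $H(x)\leq H(x-y)+|y|$ is taken over the same set on both sides.
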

\begin{lem}\label{Lem:kernel RuzhSug}
The kernel
\begin{equation*}\label{kernel of linear fio RuzSug}
  K(z)= \int_{\R^n}  \sigma_2 (\xi)\, e^{iz\cdot \xi +i\phase(\xi)}\, \ddd\xi.
\end{equation*}
 of $T^\phase_{\sigma_2}$ is smooth on $\bigcup_{r>0}\Delta_r$,
and for all $L>n$ it satisfies
\begin{equation*}\label{boundedness of HLK}
\norm{H^{L} K}
_{L^\infty(\R^n\times\R^n \times\Delta_r)}\leq C(r, L,\MM,\NN),
\end{equation*}
where $C(r,L,\MM,\NN)$ is a positive constant depending
only on $L$, $r>0$, $\MM$ and $\NN$.
For $0< p \leq 1$  and $L>n/p$, the function $H(z)$ satisfies the bound

\begin{equation*}\label{integrable HL}
\norm{ H^{-L}}_{L^p(\Delta_r)}
\leq C(r,L,\NN, p).
\end{equation*}
\end{lem}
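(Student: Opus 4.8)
The plan is to deduce all three assertions from one structural fact: for $z$ in a level set $\Delta_r$ the phase $\Psi(z,\xi):=z\cdot\xi+\phase(\xi)$ of the oscillatory integral defining $K(z)$ has no $\xi$-critical points, since $|\nabla_\xi\Psi(z,\xi)|=|z+\nabla\phase(\xi)|\geq H(z)\geq r>0$ for every $\xi\neq0$. (Throughout we regard $K$ as a function of the single variable $z$, in which the $L^\infty$-norm over $\R^n\times\R^n\times\Delta_r$ is simply $\sup_{z\in\Delta_r}|H(z)^LK(z)|$.) First I record the geometry. Positive homogeneity of degree one makes $\nabla\phase$ homogeneous of degree $0$ and $\nabla^2\phase$ of degree $-1$, so $|\partial^\gamma\phase(\xi)|\leq\NN\langle\xi\rangle^{1-|\gamma|}$ on $\supp\sigma_2\subseteq\{|\xi|\geq1/2\}$ for $1\leq|\gamma|\leq n+2$, and the image $S:=-\nabla\phase(\R^n\setminus\{0\})=-\nabla\phase(\mathbb{S}^{n-1})$ is a compact subset of $\overline{B(0,\NN)}$. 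Hence $H(z)=\inf_\xi|z+\nabla\phase(\xi)|=\dist{z}{S}$, which shows $H$ is $1$-Lipschitz (in particular continuous), $\bigcup_{r>0}\Delta_r=\{z:H(z)>0\}$ is open, and $H(z)\geq|z|/2$ when $|z|\geq2\NN$.

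For the kernel bound, decompose $\sigma_2=\sum_{j\geq0}\psi_j\sigma_2$ with the Littlewood--Paley partition of Definition~\ref{def:LP} and write $K=\sum_j K_j$, $K_j(z):=\int\psi_j(\xi)\sigma_2(\xi)\,e^{i\Psi(z,\xi)}\,\ddd\xi$. For $j\geq1$ rescale $\xi=2^j\eta$: by homogeneity $\Psi(z,2^j\eta)=2^j\bigl(z\cdot\eta+\phase(\eta)\bigr)$, the rescaled amplitude $\psi_j(2^j\eta)\sigma_2(2^j\eta)$ and all its $\eta$-derivatives are $\lesssim 2^{jm}$ uniformly on $\{|\eta|\sim1\}$ (with constants controlled by the symbol seminorms of $\sigma_2$, i.e.\ by $\MM$), and the phase $z\cdot\eta+\phase(\eta)$ has gradient of size $\geq H(z)$ and higher $\eta$-derivatives of size $\lesssim\NN$ there. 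Integrating by parts $N$ times with $L^{t}b=-\tfrac{1}{i\,2^{j}}\nabla_\eta\cdot\!\bigl(\tfrac{\nabla_\eta\Psi}{|\nabla_\eta\Psi|^{2}}\,b\bigr)$, whose $N$-fold iterate is a sum of terms each carrying a factor $2^{-jN}$, at least $N$ powers of $|\nabla_\eta\Psi|^{-1}\leq H(z)^{-1}$, and derivatives of the amplitude and of $\nabla_\eta\Psi/|\nabla_\eta\Psi|^{2}$ (the latter controlled by $\NN$ and further powers of $H(z)^{-1}\leq r^{-1}$), one obtains
\[
|K_j(z)|\;\lesssim\;C\,\frac{2^{j(n+m-N)}}{H(z)^{N}},
\]
with $C$ depending only on $N,\MM,\NN,r$, and with the single remaining piece $K_0$ (compactly supported amplitude) estimated directly in the same manner. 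Taking $N>L$ --- which, since $m\leq0$, also makes $\sum_{j\geq0}2^{j(n+m-N)}$ a convergent geometric series --- and summing yields $|K(z)|\lesssim C\,H(z)^{-N}\leq C(r,L,\MM,\NN)\,H(z)^{-L}$ on $\Delta_r$ (using $H(z)\geq r$ once more), which is the asserted estimate.

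The smoothness of $K$ follows by differentiation under the integral sign: $\partial_z^\alpha K(z)=\int(i\xi)^\alpha\sigma_2(\xi)\,e^{i\Psi(z,\xi)}\,\ddd\xi$ has amplitude of order $m+|\alpha|$, so the one-piece estimate above, now with $N>n+m+|\alpha|$, shows $\sum_j\partial_z^\alpha K_j$ converges absolutely and locally uniformly on the open set $\{H>0\}=\bigcup_{r>0}\Delta_r$; hence $K\in\mathcal{C}^\infty(\bigcup_{r>0}\Delta_r)$. For the final bound, split $\Delta_r=\{z\in\Delta_r:|z|\leq2\NN\}\cup\{z:|z|>2\NN\}$. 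On the first, bounded, piece $H^{-L}\leq r^{-L}$, so it contributes at most $r^{-L}|B(0,2\NN)|^{1/p}$ to the $L^p$-norm; on the second $H(z)\geq|z|/2$ by the first paragraph, so $\int_{|z|>2\NN}H(z)^{-Lp}\,\dd z\lesssim\int_{2\NN}^{\infty}\rho^{n-1-Lp}\,\dd\rho<\infty$ precisely because $Lp>n$, i.e.\ $L>n/p$. Therefore $\norm{H^{-L}}_{L^p(\Delta_r)}\leq C(r,L,\NN,p)$.

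The main obstacle is the bookkeeping compressed into the second paragraph: one must verify that the iterated transpose $(L^{t})^{N}$ distributes its $N$ derivatives between the rescaled amplitude and the phase factor $\nabla_\eta\Psi/|\nabla_\eta\Psi|^{2}$ so that every resulting term (i) gains a genuine $2^{-j}$ per derivative --- equivalently an extra $\langle\xi\rangle^{-1}$ before rescaling --- ensuring convergence of the sum over Littlewood--Paley pieces, and (ii) retains at least $N$ factors of $H(z)^{-1}$, with all constants expressible through $\MM$ and $\NN$ alone. This is exactly the content of Theorem~2.2 in \cite{RuzhSug}, of which the present lemma is the special case of an $x$-independent amplitude; granting that, the geometry of the first paragraph and the elementary integral estimate of the third conclude the proof.
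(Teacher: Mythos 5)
Your argument is correct, but it is worth noting how it relates to the paper: the paper offers no proof of this lemma at all, simply recording it (together with Lemma \ref{Lem:outside RuzhSug}) as a special case of Theorem 2.2 of Ruzhansky--Sugimoto \cite{RuzhSug}. What you supply is essentially a self-contained re-proof of that $x$-independent case: the observation that $H(z)=\dist{z}{S}$ with $S=-\nabla\phase(\mathbb{S}^{n-1})$ compact (hence $H$ Lipschitz, $\bigcup_{r>0}\Delta_r$ open, and $H(z)\gtrsim|z|$ for $|z|\geq 2\NN$), a dyadic decomposition with the rescaled non-stationary phase estimate $|K_j(z)|\lesssim 2^{j(n+m-N)}H(z)^{-N}$ giving the $L^\infty$ bound and, by the same estimates applied to $\partial_z^\alpha K_j$, the smoothness of $K$ on $\{H>0\}$, and finally the near/far splitting at $|z|\approx 2\NN$ which reduces the $L^p$ bound to $\int_{2\NN}^\infty \rho^{n-1-Lp}\dd\rho<\infty$, i.e.\ exactly the condition $L>n/p$. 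The approach buys transparency (one sees precisely why only finitely many seminorms of $\sigma_2$ and $\phase$ enter, and why homogeneity of $\phase$ is what makes $S$ compact), at the cost of the bookkeeping in the iterated integration by parts, which you rightly flag and, in the end, still delegate to \cite{RuzhSug}; since the paper delegates everything to that reference, your proposal is strictly more informative. Two cosmetic points: your reading of the (notationally awkward, inherited) norm $\norm{H^LK}_{L^\infty(\R^n\times\R^n\times\Delta_r)}$ as $\sup_{z\in\Delta_r}|H(z)^LK(z)|$ is the intended one; and since $L>n$ need not be an integer, one should perform $N=\lceil L\rceil$ integrations by parts and then trade the surplus $H^{-(N-L)}\leq r^{-(N-L)}$, exactly as you do---though for large $L$ this uses derivatives of $\phase$ of order beyond those entering $\NN$, a looseness in the stated dependence of the constant that is already present in the paper's formulation.
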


Now returning to the problem of bounding the $L^p$-norm of $T^\phase_{\sigma_2} a$, we split
\nm{deltaandnotdelta}{
\left\| T^\phase_{\sigma_2} a \right\|_{L^p(\R^n)}\leq \left\| T^\phase_{\sigma_2} a\right\|_{L^p(\Delta_{2r})}+\left\| T^\phase_{\sigma_2} a\right\|_{L^p(\R^n \setminus \Delta_{2r})}.
}
We first estimate the integral in \eqref{deltaandnotdelta} over $\Delta_{2r}$. For $x\in \Delta_{2r}$ and $|y|\leq r$, Lemma \ref{Lem:outside RuzhSug} yields that
$H(x)\leq2 H(x-y)$ and $x-y\in\Delta_r$. This together with Lemma \ref{Lem:kernel RuzhSug} in turn imply that
\begin{align*}
| T^\phase_{\sigma_2} a(x)|
&\leq 2^{L}H(x)^{-L}\int_{|y|\leq r}
\abs{H(x-y)^{L}K(x-y)a(y)}\,\dd y
\\
&\leq 2^{L} H(x)^{-L}
\Vert H^{L} K\Vert_{L^\infty(\R^n \times \Delta_r)}
\Vert a\Vert_{L^1}\leq C(n,L,\MM,\NN)  H(x)^{-L},
\end{align*}
for $x\in\Delta_{2r}$, since $\Vert a\Vert_{L^1} \leq |B|^{1-1/p},$ and $r\geq 1$.
Therefore, choosing $L>n/p$, Lemma \ref{Lem:kernel RuzhSug} and the monotonicity
of $\Delta_r$ yield
\nma{awayfromdelta}{
\left\|T^\phase_{\sigma_2} a\right\|_{L^p(\Delta_{2r})}
&\leq
\left\|H(x)^{-L}\right\|_{L^p(\Delta_{2r})}
\leq C(n, \MM, \NN),
}
as required

For the integral in \eqref{deltaandnotdelta} over $\R^n \setminus \Delta_{2r}$, Lemma \ref{Lem:outside RuzhSug} and H\"older's inequality yield that
\begin{align*}
\left\| T^\phase_{\sigma_2} a\right\|_{L^p(\R^n \setminus \Delta_{2r})}
&\leq
|\R^n \setminus \Delta_{2r}|^{1-p/2}
\left\|T^\phase_{\sigma_2} a \right\|^{p}_{L^2(\R^n)}
\\
&\lesssim
r^{n(1-p/2)}\|a\|^{p}_{L^2(\R^n)}
\lesssim 1,
\end{align*}
which together with \eqref{awayfromdelta} proves the estimate $\norm{T^\phase_{\sigma_2}a}_{L^p} \lesssim 1$.

\section{A counter-example to the global $h^p$-boundedness of linear FIO's for $0 < p \leq n/(n+1)$}\label{Sharpness}

In Section \ref{sec:lowfreqlinearhp} we only succeeded in proving that the low-frequenct part of an FIO is bounded on $h^p$ for $p>n/(n+1)$. Here we shall constructively prove that the generic behaviour of an FIO acting on a Schwartz function is no better than $O(|x|^{-(n+1)})$ as $|x| \to \infty$ and so we cannot expect the boundedness of an FIO into $h^p \subseteq L^p$ for $p \leq n/(n+1)$ to hold. More specifically, for each dimension $n$, we will find a function $f \in \mathscr{S} \subseteq h^p$ for which
\begin{equation}\label{eq:ce}
T(f)(x) := \int_{\R^n} \widehat{f}(\xi)e^{ix\cdot\xi + i|\xi|}\ddd\xi = \left(\frac{\Gamma(\frac{n+1}{2})\widehat{f}(0)}{\pi^{(n+1)/2}i}\right)|x|^{-(n+1)} + O\left(\frac{1+\log |x|}{|x|^{n+3}}\right)
\end{equation}
as $|x| \to \infty$. The function $f$ will be chosen so that $\widehat{f}$ has compact support, thus showing that, regardless of the order of the decay of the amplitude, Theorem \ref{linearhpthm} cannot hold if $0 < p \leq n/(n+1)$. In the case $n=1$, this fact can also be proved directly, without the need for \eqref{eq:dimred} below, using integration by parts. A different proof, again in the case $n=1$, which yields the slightly stronger statement
\[
T(f)(x) = \left(\frac{\widehat{f}(0)}{\pi i}\right) \frac{1}{x^2} + O(x^{-4})
\]
as $|x|\to \infty$ can be found in \cites{IRS}.

We consider here the case $n > 1$. For a function $f_0 \colon \R^+ \to \C$, we can define a radial function $f \colon \R^n \to \C$ by $f(x) = f_0(|x|)$ for all $x \in \R^n$. The Fourier transform of this $f$ is then also a radial function and can be used to define a transformation on $f_0$, as
\[
\mathcal{F}_n(f_0)(r) := \widehat{f}(\xi)
\]
where $r = |\xi|$. For $n > 1$, the representation of the Fourier transform of a radial function (see, for example, \cite{SW}*{Chp~4, Thm~3.10}) together with properties of Bessel functions leads to the relation
\begin{equation}\label{eq:dimred}
\mathcal{F}_n(f_0) = -\frac{1}{2\pi}\mathcal{F}_{n+2}(f_1),
\end{equation}
for $f_1(r) = f_0'(r)/r$, provided $f_0$ is continuously differentiable and
\[
f_0(r) = \left\{
	\begin{array}{ll}
		O(r^{(1-n)/2})  & \mbox{as $r \to \infty$}; \\
		O(r^{-n})  & \mbox{as $r \to 0$}.
	\end{array}
\right.
\]

In order to prove \eqref{eq:ce} choose $f$ to be a smooth radial function whose Fourier transform $ \widehat{f}$ is compactly supported and equal to one in a neighbourhood of the origin. Furthermore, we set $g_0(r) = \widehat{f}(\xi)$ for $r = |\xi|$,
\begin{align*}
g_1(r) &= g_0(r)e^{ir}, \\
g_2(r) &= g_0(r)(e^{ir}-1-ir+r^2/2), \\
g_3(r) &= g_0(r)(1-r^2/2), \quad \mbox{and} \\
g_4(r) &= g_0(r)ir. \\
\end{align*}
Then $T(f)(x) = (2\pi)^{-n}\mathcal{F}_n(g_1)(|x|)$ and
\[
\mathcal{F}_n(g_1) = \mathcal{F}_n(g_2) + \mathcal{F}_n(g_3) + \mathcal{F}_n(g_4).
\]
Since $x \mapsto g_3(|x|)$ is smooth and compactly supported $\mathcal{F}_n(g_3)(r) = O(r^{-\NNN})$ as $r \to \infty$ for each $\NNN \in \N$. We introduce a smooth cut-off function $\chi$ which is equal to one on the unit ball supported in the double of the unit ball. Thus

\begin{align*}
\mathcal{F}_n(g_2)(|x|) &= \int_{\R^n} \widehat{f}(\xi)\left(e^{i|\xi|}-1-i|\xi|+\frac{|\xi|^2}{2}\right)e^{ix\cdot\xi}\, \dd\xi \\
&= \int_{\R^n} \widehat{f}(\xi)\chi(\xi/\lambda)\left(e^{i|\xi|}-1-i|\xi|+\frac{|\xi|^2}{2}\right)e^{ix\cdot\xi}\, \dd\xi\\
&+\quad \int_{\R^n} \widehat{f}(\xi)(1-\chi(\xi/\lambda))\left(e^{i|\xi|}-1-i|\xi|+\frac{|\xi|^2}{2}\right)e^{ix\cdot\xi}\, \dd\xi \\
&= A + B.
\end{align*}
To estimate $A$ and $B$ we can easily see that for $\xi \in \supp(\widehat{f})$ one has
\[
\left|\partial^\alpha_\xi(e^{i|\xi|}-1-i|\xi|+|\xi|^2/2)\right| \lesssim |\xi|^{3-|\alpha|}.
\]
Therefore $A \lesssim \lambda^{n+3}$, and for each $\NNN$
\begin{equation*}
\begin{split}
|B| = \left|\int_{\R^n} \widehat{f}(\xi)(1-\chi(\xi/\lambda))\left(e^{i|\xi|}-1-i|\xi|+\frac{|\xi|^2}{2}\right)\left[\frac{x\cdot\nabla_\xi}{2\pi i|x|^2}\right]^{\NNN}\left(e^{ix\cdot\xi}\right)\, \dd\xi\right| \\\lesssim \frac{1}{|x|^{\NNN}}\sum_{|\alpha_1| + |\alpha_2| + |\alpha_3| = {\NNN}} \int_{\R^n} \left|\partial^{\alpha_1}\widehat{f}(\xi)\right|\left|\partial^{\alpha_2}(1-\chi(\xi/\lambda))\right|\left|\partial^{\alpha_3}\left(e^{i|\xi|}-1-i|\xi|+\frac{|\xi|^2}{2}\right)\right| \, \dd\xi \\
\lesssim \frac{1}{|x|^{\NNN}} \sum_{|\alpha_1| + |\alpha_2| + |\alpha_3| = \NNN,\,|\alpha_2|>0} \lambda^{-|\alpha_2|}\int_{|\xi| \sim \lambda}|\xi|^{3-|\alpha_1|-|\alpha_3|} \, \dd\xi \\+\frac{1}{|x|^{\NNN}}\sum_{|\alpha_1| + |\alpha_3| = {\NNN}} \int_{\lambda < |\xi| \lesssim 1}|\xi|^{-|\alpha_1|+3-|\alpha_3|} \, \dd\xi,
\end{split}
\end{equation*}
whereby splitting the sum we can take advantage of the different support properties of $(1-\chi(\xi/\lambda))$ and its derivatives. Taking $\NNN = n+3$ we find
\[
|B| \lesssim \frac{1}{|x|^{n+3}} \left( 1-\log (\lambda)\right),
\]
therefore taking $\lambda = 1/|x|$ yields
\[
\mathcal{F}_n(g_2)(|x|) \leq |A| + |B| \lesssim \frac{1}{|x|^{n+3}} \left( 1+\log |x|\right),
\]
thus $\mathcal{F}_n(g_2)(r) = O((1+\log r)/r^{n+3})$.

To estimate $\mathcal{F}_n(g_4)$, we make use of \eqref{eq:dimred}. For this purpose we define
\begin{align*}
h_0(r) &= g_4'(r)/r, \\
h_1(r) &= ig_0'(r), \\
h_2(r) &= i(g_0(r)-1)/r, \quad \mbox{and} \\
h_3(r) &= i/r. \\
\end{align*}
Relation \eqref{eq:dimred} then gives us that
\[
\mathcal{F}_n(g_4) = -\frac{1}{2\pi}\mathcal{F}_{n+2}(h_0) = -\frac{1}{2\pi}\left(\mathcal{F}_{n+2}(h_1) + \mathcal{F}_{n+2}(h_2) + \mathcal{F}_{n+2}(h_3)\right).
\]
We have that $\mathcal{F}_{n+2}(h_1)(r) = O(r^{-\NNN})$ as $r \to \infty$ for each $\NNN \in \N$, since $h_1$ is smooth and compactly supported. It can also be shown that $\mathcal{F}_{n+2}(h_2)(r) = O(r^{-\NNN})$ as $r \to \infty$ for each $\NNN \in \N$, since $h_2$ is smooth and its higher-order derivatives decay sufficiently rapidly. Morover, $\mathcal{F}_{n+2}(h_3)(r) = i2^{n+1}\pi^{(n+1)/2}\Gamma(\frac{n+1}{2})r^{-(n+1)}$ (as can be found in, for example, \cite{SW}*{Chp~4, Thm~4.1}). Putting these together, we find that
\[
\mathcal{F}_n(g_4) = -\left(2^n\pi^{(n-1)/2}\Gamma\left(\frac{n+1}{2}\right)i\right)r^{-(n+1)} + O(r^{-\NNN})
\]
as $r \to \infty$ for each $\NNN \in \N$ and therefore we have proved \eqref{eq:ce} in the case $n>1$.

\section{The identification of the endpoint cases}\label{endpoint cases}

In order to prove Theorem \ref{thm:main} we wish to identify the various values of the exponents $p_1, p_2, \dots, p_N$ from which the general result claimed in Theorem \ref{thm:main} will follow via interpolation. These specific values are called \emph{endpoint cases} and to identify them we define the continuous convex piece-wise linear function
\nm{def:functionf}{
    F(x)=\norm{x-\frac{1}{2}\mathbf{1}}_{\ell^1}+\abs{x\cdot \mathbf{1}-\frac{1}{2}},\quad \mbox{(for $x\in \R^N$).}
}
where $\mathbf{1}=(1,\ldots,1)\in \R^{N}$. Bearing in mind that $p_0$ satisfies \eqref{eq:hoelder}, the right-hand side of \eqref{ineq:criticalexponent} can be written as
\m{
-(n-1)F(1/p_1,\ldots,1/p_N).
}
The fact we are restricting our attention to exponents $1 \leq p_j \leq \infty$ ($j=0,1,\dots,N$) means we are interested in the behaviour of $F$ on the domain
\nm{def:domaind}{
    D:=\set{x\in [0,1]^N:\, x\cdot \mathbf{1}\leq 1},
} 
and in understanding the set 
\[
    \set{(x,s)\in D\times [0,\infty):\, F(x)\leq s}.
\]
Since $F$ is convex and piece-wise linear, this set is a convex unbounded polytope. The extreme points of this set lie on the graph of $F$ over $D$ and are in one-to-one correspondence with the extreme points of the subsets of $D$ on which $F$ is linear. The subsets of $D$ on which $F$ is linear, as intersections of the compact convex set $D$ with convex sets (in this case half-spaces), are compact and convex. By the Krein-Milman theorem, these subsets of $D$ are the closed convex hull of their extreme points. Thus our task is to identify these convex sets and their extreme points. This is the content of the following theorem:

\begin{thm}\label{thm:extremepoints}
If $\{e_j\}_j$ is the standard basis in $\R^N$, then the set $D$ defined in \eqref{def:domaind} can be written as the union
\[
    D={D_0^0}\cup {D_0^1}\cup (\cup_{j=1}^N D_1^j),
\]
where
\ma{
D_0^0&=\mathrm{Hull}\brkt{\set{0} \cup \set{\frac{e_k}{2}}_{k=1}^N} \\
D_0^1&= \mathrm{Hull}\brkt{\set{\frac{e_k}{2}}_{k=1}^N \cup \set{\frac{e_k+e_\ell}{2}}_{k,\ell=1}^N}\\
D_{1}^{j} &=\mathrm{Hull}\brkt{\set{{e_j},\frac{e_j}{2}} \cup \set{\frac{e_j+e_k}{2}}_{k\neq j}}.
}
Moreover, $F$ defined in \eqref{def:functionf} is a linear function on each of these sets.
\end{thm}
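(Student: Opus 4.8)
The plan is to exploit that $F$ is a sum of moduli of affine functions: writing $F(x)=\sum_{i=1}^N\abs{x_i-\tfrac12}+\abs{\sum_{i=1}^N x_i-\tfrac12}$, one sees that $F$ is affine on any set on which each of the $N+1$ affine functions $x\mapsto x_i-\tfrac12$ $(i=1,\dots,N)$ and $x\mapsto\sum_i x_i-\tfrac12$ keeps a constant sign. Thus $F$ is affine on every cell of the hyperplane arrangement of $\R^N$ generated by $\set{x_i=\tfrac12}_{i=1}^N$ and $\set{\sum_i x_i=\tfrac12}$, and the whole statement reduces to intersecting this arrangement with $D$ and matching the resulting cells with $D_0^0$, $D_0^1$ and the $D_1^j$.

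First I would observe that, inside $D$, only $N+2$ of these sign patterns occur. If $x\in D$ and $x_j>\tfrac12$ for some $j$, then $\sum_{i\neq j}x_i\leq 1-x_j<\tfrac12$, so $x_i<\tfrac12$ for every $i\neq j$; in particular at most one coordinate of a point of $D$ can exceed $\tfrac12$, and when one does, $\sum_i x_i\geq x_j\geq\tfrac12$, so the sign of $\sum_i x_i-\tfrac12$ is forced. Hence $D$ is covered by the closed cells
\[
R^0:=\set{x\in D:\, x_i\leq\tfrac12\ \forall i,\ \textstyle\sum_i x_i\leq\tfrac12},\qquad R^1:=\set{x\in D:\, x_i\leq\tfrac12\ \forall i,\ \textstyle\sum_i x_i\geq\tfrac12},
\]
together with $R_j:=\set{x\in D:\, x_j\geq\tfrac12}$ for $j=1,\dots,N$, where on $R_j$ the conditions $x_i\leq\tfrac12$ $(i\neq j)$ are automatic by the displayed inequality. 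Identifying $R^0,R^1,R_j$ with $D_0^0,D_0^1,D_1^j$ then yields the asserted decomposition of $D$.

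On each cell the sign pattern is fixed, so $F$ is given there by an explicit affine formula — precisely what is meant by \emph{linear} in the statement: a short computation gives $F(x)=\tfrac{N+1}{2}-2\sum_i x_i$ on $R^0$, $F(x)\equiv\tfrac{N-1}{2}$ on $R^1$, and $F(x)=2x_j+\tfrac{N-3}{2}$ on $R_j$ (using $\sum_{i\neq j}(\tfrac12-x_i)=\tfrac{N-1}{2}-\sum_i x_i+x_j$). It then remains to identify the polytopes. Discarding redundant inequalities, $R^0=\set{x\geq 0:\sum_i x_i\leq\tfrac12}$ and $R_j=\set{x\in\R^N:\,x_i\geq 0\ (i\neq j),\ x_j\geq\tfrac12,\ \sum_i x_i\leq 1}$ are simplices (each bounded, full-dimensional, with exactly $N+1$ facets), whose vertices — obtained by making all but one facet inequality tight — are $\set{0}\cup\set{e_k/2}_k$ and $\set{e_j,e_j/2}\cup\set{(e_j+e_k)/2}_{k\neq j}$ respectively; this gives $R^0=D_0^0$ and $R_j=D_1^j$. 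For $R^1=\set{0\leq x_i\leq\tfrac12\ \forall i,\ \tfrac12\leq\sum_i x_i\leq 1}$ (again bounded, so equal to the hull of its vertices) I would show that every vertex has all coordinates in $\{0,\tfrac12\}$, so that $\tfrac12\leq\sum_i x_i\leq 1$ forces exactly one or two coordinates to equal $\tfrac12$; this leaves precisely the points $e_k/2$ and $(e_k+e_\ell)/2$ with $k\neq\ell$, i.e. $R^1=D_0^1$ (the family defining $D_0^1$ being read with distinct indices, since $(e_k+e_k)/2=e_k\notin R^1$).

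The step I expect to be the real, though still elementary, obstacle is this last vertex enumeration for $R^1$, since $R^1$ is not a simplex for $N\geq 3$: one has to rule out a vertex with a coordinate strictly in $(0,\tfrac12)$. This follows from an active-constraint count — at a vertex at most one coordinate can fail to lie at an endpoint of its box interval, and then a sum inequality must be tight, whence $\sum_i x_i\in\{\tfrac12,1\}$ and the free coordinate is forced to $0$ or $\tfrac12$, a contradiction. Once the vertices are pinned down, the covering of $D$ and the affineness of $F$ on each of $D_0^0$, $D_0^1$, $D_1^j$ are immediate from the sign analysis, completing the proof.
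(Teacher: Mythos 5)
Your argument is correct, and its skeleton coincides with the paper's: your cells $R^0$, $R^1$ and $R_j$ are literally the sets $D_0^0$, $D_0^1$ and $D_1^j$ of the paper's proof (the paper reaches them by counting the coordinates $\geq\tfrac12$, which forces $N'\in\set{0,1,2}$, and then splitting $D_0$ according to $x\cdot\mathbf{1}\lessgtr\tfrac12$), and your affine formulas for $F$ on each cell agree with the ones computed there. Where you genuinely diverge is in identifying the cells with the stated hulls. The paper obtains $D_1^j$ by noting it is $D_0^0$ translated by $e_j/2$, and treats $D_0^1$ with a ray argument: for $x\in D_0^1$ it takes the maximal segment of the ray from the origin through $x$ inside $D_0^1$ and places its endpoints on faces of $D_0^1$ whose hull descriptions (the simplex $\set{y\geq 0:\,y\cdot\mathbf{1}=\tfrac12}$, the faces $y_j=\tfrac12$, and the hypersimplex-type face $y\cdot\mathbf{1}=1$ inside $[0,\tfrac12]^N$) are asserted without proof. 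You instead recognise $R^0$ and $R_j$ directly as simplices and determine $R^1$ by enumerating its vertices via an active-constraint count together with Minkowski/Krein--Milman; this replaces the ray argument and actually substantiates the face and vertex facts the paper leaves implicit, so your treatment of $D_0^1$ is the more self-contained of the two, at the cost of a slightly longer polyhedral discussion. Your remark that the generators of $D_0^1$ must be read with $k\neq\ell$ is also well taken: if $k=\ell$ were allowed the hull would contain $e_k$, and $F$ fails to be affine on, say, the segment joining $e_\ell/2$ (with $\ell\neq k$) to $e_k$, so the linearity clause would break; the paper's own inequality description of $D_0^1$ confirms that the distinct-index reading is the intended one.
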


Before proving Theorem \ref{thm:extremepoints} we observe that the values $(1/p_1,\dots,1/p_N)$ corresponding to the endpoint cases we need to consider are exactly the points of the set
\m{
\set{0} \cup \set{\frac{e_k}{2}}_{k=1}^N \cup \set{\frac{e_k+e_\ell}{2}}_{k,\ell=1}^N \cup \set{e_k}_{k=1}^N.
}
This leads to the following corollary.
\begin{cor}\label{cor:endpointcases}
It is enough to prove \emph{Theorem \ref{thm:main}} for the following values of exponents:
\begin{enumerate}[label=\emph{(}\roman*\emph{)}, ref=(\roman*)]
    \item \label{test1} $p_j=\infty$ for all $j=0,\ldots N$;
    \item $p_0=2$ and for any $1\leq j\leq N$, $p_{j}=2$, and $p_k=\infty$ for $k\neq j$;
    
    \item $p_0=1$ and any pair $1\leq j_1< j_2\leq N$,  $p_{j_1}=p_{j_2}=2$ and $p_k=\infty$ for $j_1\neq k\neq j_2$; and
    \item $p_0=1$ and for any $1\leq j\leq N$, $p_{j}=1$, and $p_k=\infty$ for $k\neq j$.
\end{enumerate}
\end{cor}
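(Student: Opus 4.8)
The plan is to deduce the full strength of Theorem~\ref{thm:main} from its validity at the tuples (i)--(iv) by complex interpolation, using Theorem~\ref{thm:extremepoints} to control the combinatorics. First I would reduce to the critical order. Let $p_0,\dots,p_N$, $m$ and $\sigma\in S^m(n,N)$ be as in Theorem~\ref{thm:main}, so $m\leq m_c:=-(n-1)F(1/p_1,\dots,1/p_N)$; since $\langle\Xi\rangle^{m-|\alpha|}\leq\langle\Xi\rangle^{m_c-|\alpha|}$ we have $S^m(n,N)\subseteq S^{m_c}(n,N)$, so it is enough to treat $m=m_c$. Put $\mathbf x:=(1/p_1,\dots,1/p_N)\in D$. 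By Theorem~\ref{thm:extremepoints}, $\mathbf x$ lies in one of the sets $D_0^0$, $D_0^1$, $D_1^j$, call it $\Delta$; on $\Delta$ the function $F$ is affine, and $\Delta=\mathrm{Hull}(v^{(1)},\dots,v^{(L)})$, where the extreme points $v^{(i)}$ all belong to $P:=\set{0}\cup\set{e_k/2}_{k=1}^N\cup\set{(e_k+e_\ell)/2}_{k,\ell=1}^N\cup\set{e_k}_{k=1}^N$ from the remark preceding the corollary.

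Next I would match the vertices with the cases (i)--(iv). To $v\in P$ associate the exponents defined by $1/p_j(v):=v_j$ for $j=1,\dots,N$ and $1/p_0(v):=v\cdot\mathbf 1$, so that \eqref{eq:hoelder} holds. Then $v=0$ gives $p_j(v)=\infty$ for all $j$ (case (i)); $v=e_k/2$ gives $p_0(v)=p_k(v)=2$ and the rest $\infty$ (case (ii)); $v=(e_k+e_\ell)/2$ with $k\neq\ell$ gives $p_0(v)=1$, $p_k(v)=p_\ell(v)=2$ and the rest $\infty$ (case (iii)); and $v=e_k$ gives $p_0(v)=1$, $p_k(v)=1$ and the rest $\infty$ (case (iv)); moreover $-(n-1)F(v)$ is finite in each case. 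Hence the hypotheses of the corollary supply, for every $i$, every phase $\Phi$ of the form \eqref{eq:phaseform} and every amplitude $\tau\in S^{-(n-1)F(v^{(i)})}(n,N)$, the boundedness of $T^\Phi_\tau$ from $X^{p_1(v^{(i)})}\times\cdots\times X^{p_N(v^{(i)})}$ to $X^{p_0(v^{(i)})}$.

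Now I would interpolate. Write $\mathbf x=\sum_{i=1}^L\theta_i v^{(i)}$ as a convex combination; since $F$ is affine on $\Delta$, $m_c=\sum_i\theta_i\bigl(-(n-1)F(v^{(i)})\bigr)$ and $1/p_j=\sum_i\theta_i/p_j(v^{(i)})$ for every $j$. I would peel off one vertex at a time. If $\theta_L=1$ then $\mathbf x=v^{(L)}$ and the estimate is one of (i)--(iv); otherwise put $\mathbf y:=(1-\theta_L)^{-1}\sum_{i<L}\theta_i v^{(i)}$, which lies in the face $\mathrm{Hull}(v^{(1)},\dots,v^{(L-1)})$ of $\Delta$, on which $F$ is still affine; by induction the estimate at $\mathbf y$ is available for every amplitude of order $-(n-1)F(\mathbf y)$ (the base case being a single vertex, i.e.\ one of (i)--(iv)), and $\mathbf x=(1-\theta_L)\mathbf y+\theta_L v^{(L)}$. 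For $z$ in the strip $0\leq\Re z\leq1$ set
\m{
\sigma_z(x,\Xi):=\sigma(x,\Xi)\,\langle\Xi\rangle^{(1-z)(-(n-1)F(\mathbf y))+z(-(n-1)F(v^{(L)}))-m_c}\,e^{z^2-\theta_L^2},
}
so that $\sigma_{\theta_L}=\sigma$, while on the line $\Re z=0$ (respectively $\Re z=1$) the function $\sigma_z$ is an amplitude of order $-(n-1)F(\mathbf y)$ (respectively $-(n-1)F(v^{(L)})$) whose $S$-seminorms grow at most polynomially in $|\Im z|$. On Schwartz inputs $z\mapsto T^\Phi_{\sigma_z}(f_1,\dots,f_N)$ is holomorphic and of admissible growth, so the multilinear form of Stein's interpolation theorem for analytic families of operators (see, e.g., \cite{Monster} and the references therein), combined with the complex interpolation identities $[X^{p_j(\mathbf y)},X^{p_j(v^{(L)})}]_{\theta_L}=X^{p_j}$ for the scale $\set{h^1,L^p,\bmo}$, gives the boundedness of $T^\Phi_{\sigma_{\theta_L}}=T^\Phi_\sigma$ from $X^{p_1}\times\cdots\times X^{p_N}$ to $X^{p_0}$. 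Iterating over the vertices of $\Delta$ proves Theorem~\ref{thm:main} for the original tuple.

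I expect the main obstacle to be the interpolation step in the non-reflexive cases, namely securing the multilinear analytic interpolation theorem for the couples $(h^1,L^2)$, $(\bmo,L^2)$ and $(h^1,\bmo)$ that occur in (iii)--(iv) and at the endpoints $p=\infty$, and verifying the admissible-growth hypothesis. The latter reduces to the fact that the constants in the endpoint estimates of Theorem~\ref{thm:main} are dominated by a fixed finite sum of the amplitude seminorms $C_{\alpha,\beta}$, so that the polynomial growth of these seminorms along vertical lines is absorbed by the Gaussian factor $e^{z^2-\theta_L^2}$. The remaining ingredients — the decomposition of $D$, the affinity of $F$, and the matching of the vertices with (i)--(iv) — are immediate from Theorem~\ref{thm:extremepoints} and the remark above.
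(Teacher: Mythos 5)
Your argument is correct and follows the same strategy as the paper: deduce Theorem~\ref{thm:main} from the endpoint cases by multilinear complex interpolation, the endpoints being read off from the vertex structure of $D$ supplied by Theorem~\ref{thm:extremepoints}. The paper dispatches this in one line by citing the multilinear interpolation framework of Grafakos--Masty\l{}o~\cite{GM} and the interpolation identities for the $h^1$/$L^2$/$\bmo$ couples from Mendez--Mitrea~\cite{MM}, whereas you have reconstructed the analytic-family construction and vertex-by-vertex peeling explicitly, correctly flagging the non-reflexive couples as the substantive point those references are invoked to handle.
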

\begin{proof}
The proof is a fairly standard application of multilinear interpolation theory as described in \cite{GM}, using know results for interpolation spaces, for example Theorem 11 in \cite{MM}. 
\end{proof}

\begin{proof}[Proof of \emph{Theorem \ref{thm:extremepoints}}]
Let $N'=N'(x)$ denote the number of coordinates such that $x_j\geq 1/2$ (for $j=1,\dots,N$). That $x\in D$ means $\sum_{j=1}^N x_j \leq 1$, which in turn implies that $N'\in\set{0,1,2}$. We can therefore decompose $D=D_0\cup D_1\cup D_2,$
where, for each $k=0,1,2$,
\[
\mbox{$D_k$ is the closure of the set of points $x\in D$ for which  $N'(x)=k$.}
\]
We observe that $D_2$ consists exactly of the vertices $\frac{1}{2}(e_j+e_k)$ for $1\leq j<k\leq N$, and it is easy to check these points are limit points of $D_1$. Therefore $D_2\subset D_1$ and
\[
D=D_0\cup D_1.
\]

We can further decompose $$D_0=D_0^0\cup D_0^1,$$
where
\[
D_0^0 = \set{x\in D_0 : 0 \leq x\cdot\mathbf{1} \leq \frac{1}{2}} \quad \mbox{and} \quad D_0^1 = \set{x\in D_0 : \frac{1}{2}\leq x\cdot\mathbf{1}\leq 1}
\]
Since $0\leq x\cdot\mathbf{1}< \frac{1}{2}$ and $x_j \geq 0$ for $x = (x_1,\dots,x_N) \in D_0^0$, all points $x \in D_0^0$ can be expressed as the convex hull of the points $0$, and $\frac{1}{2}e_k$, for $k=1,\ldots, N$. So, ${D_0^0}=\mathrm{Hull}(\set{0,\frac{e_1}{2},\dots, \frac{e_N}{2}})$.

Leaving $D_0^1$ for a moment, we next consider $D_1$. We can write
\[
D_1=\cup_{j=1}^n  D_{1}^{j},
\]
where
\[
    D_{1}^{j}=\set{x \in D_1 : \mbox{$x_j\geq \frac{1}{2}\geq x_k$ for all $k\neq j$}}. 
\]
Note that $D_{1}^{j}$ is the translation of $D_0^0$ by $\frac{e_j}{2}$, so it follows that
\[
    \begin{split}
        D_{1}^{j} &=\frac{e_j}{2}+D_0^0\\
        &=\frac{e_j}{2}+\mathrm{Hull}(\set{0,\frac{e_1}{2},\dots, \frac{e_N}{2}})\\
        &=\mathrm{Hull}\brkt{\set{{e_j},\frac{e_j}{2}} \cup \set{\frac{e_j+e_k}{2}}_{k\neq j}}.
    \end{split}
\]

We now return to $D_0^1$. Given a fixed arbitrary point $x \in D_0^1$ consider the maximal line segment contained in the ray from the origin through $x$ which is contained in $D_0^1$. This is a set of the form
\[
\set{y = \lambda x : \lambda_- \leq \lambda \leq \lambda_+}.
\]
The factor $\lambda_-$ will be determined by when the ray breaks through the plane $y \cdot \mathbf{1} = 1/2$, so $\lambda_-$ solves the equation $\lambda_- x \cdot \mathbf{1} = 1/2$, and $\lambda_+$ will be determined by when the ray first breaks through one of the planes $y \cdot e_j = 1/2$ ($j=1,\dots,N$) and $y \cdot \mathbf{1} = 1$, therefore
\[
\lambda_+ = \min\set{\lambda_j : \mbox{$\lambda_j x \cdot e_j = 1/2$ ($j=1,\dots,N$) and $\lambda_{N+1} x \cdot \mathbf{1} = 1$}}.
\]
However
\ma{
\lambda_-x &\in \mathrm{Hull}(\set{\frac{e_1}{2},\dots, \frac{e_N}{2}}), \\
\lambda_jx &\in \mathrm{Hull}\brkt{\set{\frac{e_j}{2}} \cup \set{\frac{e_j+e_k}{2}}_{k\neq j}} \quad \mbox{if $\lambda_+ = \lambda_j$ for $j=1,\dots,N$, and} \\
\lambda_{N+1}x &\in \mathrm{Hull}\brkt{\set{\frac{e_k+e_\ell}{2}}_{k\neq \ell}}, \quad \mbox{if $\lambda_+ = \lambda_{N+1}$,}
}
so it follows that
\m{
x \in \mathrm{Hull}\brkt{\set{\frac{e_k}{2}}_{k=1}^N \cup \set{\frac{e_k+e_\ell}{2}}_{k,\ell=1}^N}.
}

Summarising, we can write
\[
    D={D_0^0}\cup {D_0^1}\cup (\cup_{j=1}^N D_1^j),
\]
where each set is convex and the extreme points are the ones given in the statement of Theorem \ref{thm:extremepoints}. 

We now check that $F$ is linear on these sets. For $x\in D_0$, $x_k\leq \frac{1}{2}$ for all $k$, so we have that
\ma{
    F(x) &= \sum_{k=1}^N \brkt{\frac12-x_k} + \abs{\sum_{k=1}^N x_k - \frac12} \\
    &=
    \begin{cases}
        \frac{N-1}{2} &\mbox{if $x\in D_0^1,$}\\
        \frac{N-1}{2}+\brkt{1-2x\cdot \mathbf{1}}&\mbox{if $x\in D_0^0.$}
    \end{cases}
}
Now if $x\in D_1^j$, then $x_k\leq \frac{1}{2}\leq x_j$ for all $k\neq j$ and so $x\cdot \mathbf{1} \geq 1/2$. Thus we can write
\ma{
F(x) &= \sum_{k\neq j} \brkt{\frac12-x_k} + \brkt{x_j-\frac12} + \sum_{k=1}^N x_k - \frac12 \\
&= \frac{N-1}2 + 2x_j - 1.
}
%Now if $x\in D_1$, then we can write $x=x'+y$, 
%where $x'_j=x_j$ if $x_j\geq 1/2$ and $x_j'=0$ if $x_j<1/2$, and $y=x-x'$. This yields that
%\[
%    F(x)=x'\cdot \mathbf{1}-\frac{N'}{2}+\frac{N-N'}{2}-y\cdot \mathbf{1}+\abs{x\cdot \mathbf{1}-\frac{1}{2}}.
%\]
%Now since for $x\in D_1\cup D_2$, we have that $x\cdot \mathbf{1}\geq \frac{1}{2}$ and $x\cdot\mathbf{1}=x'\cdot \mathbf{1}+y\cdot \mathbf{1}$, these yield that 
%\[
%    F_N(x)=\frac{N-1}{2}+\begin{cases}
%        {2x'\cdot \mathbf{1}-N'} &\qquad x\cdot \mathbf{1}\geq \frac{1}{2}\\
%        {1-2y\cdot \mathbf{1}-N'}&\qquad x\cdot \mathbf{1}\leq \frac{1}{2}.
%    \end{cases}
%    F(x)=\frac{N-1}{2}+{2x'\cdot \mathbf{1}-N'}
%\]
\begin{figure}
    \centering
    \begin{minipage}{.45\textwidth}
    \centering
    \includegraphics[scale=.4]{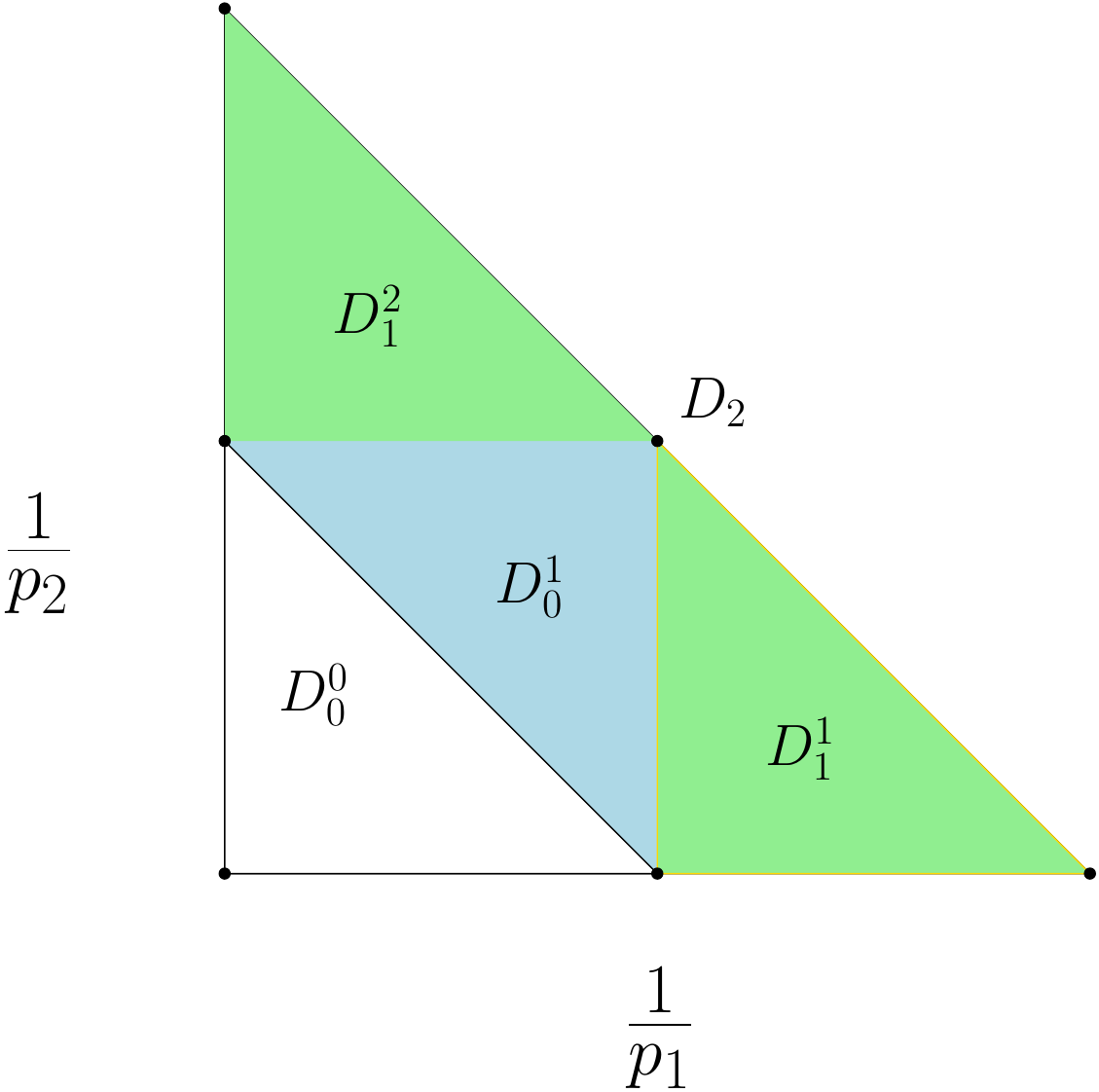}
\end{minipage}\begin{minipage}{.45\textwidth}
    \centering
   \includegraphics[scale=.2
   ]{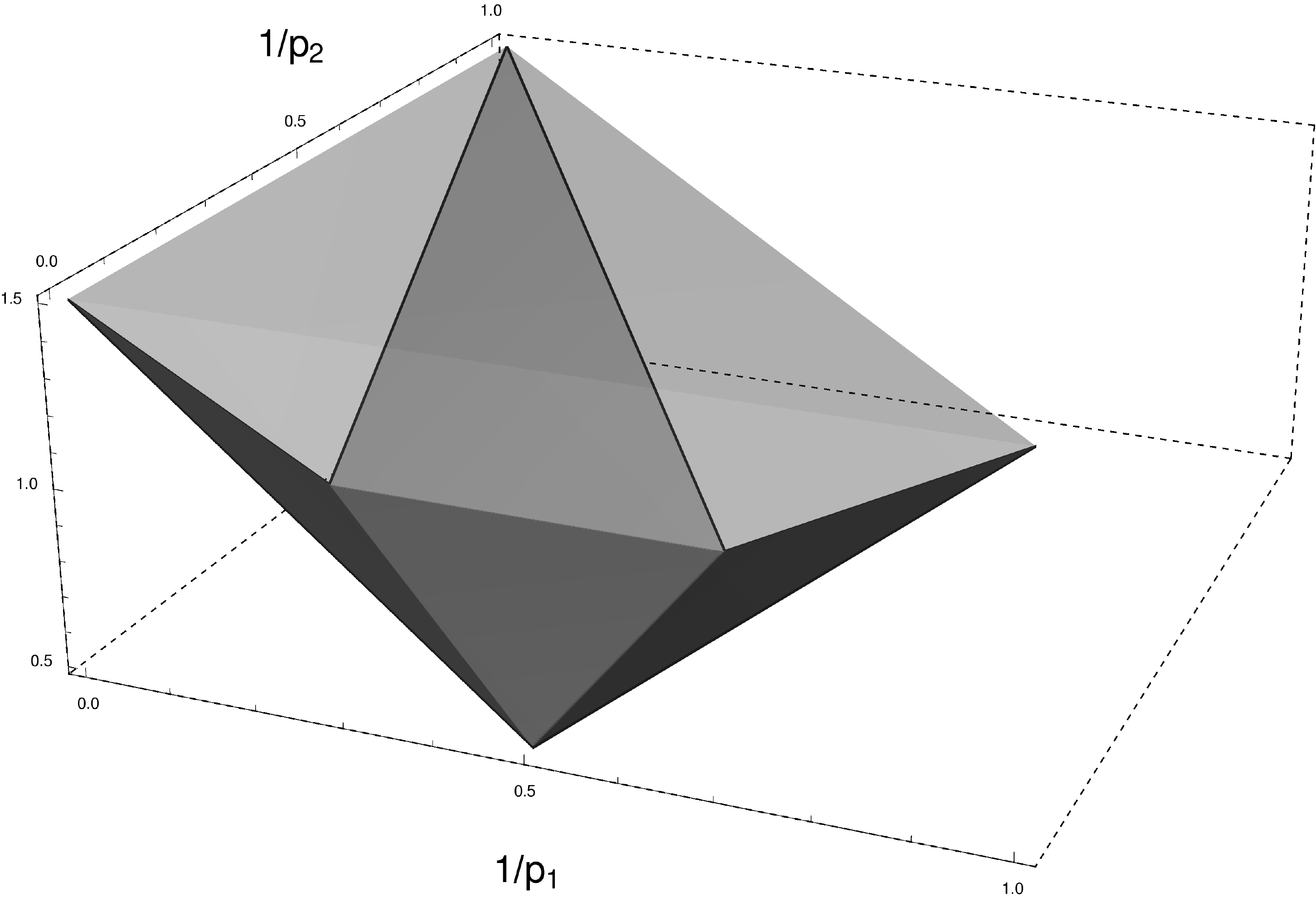}
\end{minipage}
    \caption{Decomposition of $D$ and graph of $F$ for $N=2$.}
    \label{fig:my_label}
\end{figure}

Thus, in all cases, we see that $F$ is linear on each of the convex sets 
${D_0^0}$, ${D_0^1}$ and $D_1^j$, for $j=1,\ldots, N$. %So the values of $F$ on each set can be recovered as a convex combination of the values of $F$ at the extreme points of the respective set, and hence the graph of $F$ determines the  polyhedron with vertices given above. 
\end{proof}

\section{Frequency decomposition of multilinear FIOs}\label{sec:freqdecom}

In what follows we shall demonstrate that the regularity of $T^\Phase_\sigma$ can be obtained by considering three frequency regimes: When $\Xi$ lies inside a compact set; when one component of $\Xi= (\xi_1, \dots, \xi_N)$ dominates the others; and when two fixed components of $(\xi_1, \dots, \xi_N)$ are comparable to each other.

In all that follows we take $N>1$. First we define the component of $\sigma$ with frequency support contained in a compact set. We introduce the cut-off function $\chi \colon \R^{nN} \to \R$, such that $\chi(\Xi) = 1$ for $|\Xi| \leq 1/8$ and $\chi(\Xi) = 0$ for $|\Xi| \geq 1/4$ and define
\[
\sigma_0(x,\Xi) = \chi(\Xi)\,\sigma(x,\Xi).
\]
To define the components of $\sigma$ where one frequency dominates all the others, we construct a cut-off function $\nu \colon \R^{nN} \to \R$ such that $\nu(\Xi) = 0$ for $|\xi_1| \leq 32\sqrt{N-1}\,|\Xi'|$ and $\nu(\Xi) = 1$ for $64\sqrt{N-1}\,|\Xi'| \leq |\xi_1|$, where $\Xi' := (\xi_2,\ldots,\xi_N)$. 
This can be done by taking $\lambda\in \mathcal{C}^\infty(\R)$ such that $\lambda(t)=1$ if $t\leq c_1$ and $\lambda(t)=0,$ if $t\geq c_2$ for two real numbers $0<c_1<c_2<1$ which will be decided momentarily. Define 
\[
    \nu(\Xi)=1-\lambda\brkt{\frac{\abs{\xi_1}^2}{\abs{\Xi}^2}}\in\mathcal{C}^\infty(\R^{nN}\setminus 0).
\]
By construction, it follows that
\[
    \nu(\Xi)=\begin{cases}
        0 &\mbox{if\,} \abs{\xi_1}^2\leq c_1 \abs{\Xi}^2\\
        1 & \mbox{if\,} \abs{\xi_1}^2\geq c_2 \abs{\Xi}^2
    \end{cases}=\begin{cases}
        0 &\mbox{if\,} \abs{\xi_1}\leq \sqrt{\frac{c_1}{1-c_1}} \abs{\Xi'}\\
        1 & \mbox{if\,} \abs{\xi_1}\geq \sqrt{\frac{c_2}{1-c_2}} \abs{\Xi'}. 
    \end{cases}
\]
and a calculation shows that taking 
\nma{cone ctwo}{
    c_1&=1-\frac{1}{1+32^2(N-1)}= \frac{2^{10}(N-1)}{1+2^{10}(N-1)},\quad \mbox{and} \\ c_2&=1-\frac{1}{1+4\cdot 32^2(N-1)}= \frac{2^{12}(N-1)}{1+2^{12}(N-1)},
}
ensures we obtain the function $\nu$ with the required properties. Given $j=1,\ldots N$ we define $\Xi'_j:=(\xi_1,\ldots, \xi_{j-1},\xi_{j+1},\ldots ,\xi_N)$ and 
\[
    \nu_j(\Xi):=\nu(\xi_j,\Xi'_j),
\]
for all $\Xi\in \R^{nN}$. We then define the component of $\sigma$ for which $\xi_j$ dominates the other frequency components to be
\[
\sigma_j(x,\Xi) = (1-\chi(\Xi))\,\nu_j(\Xi)\,\sigma(x,\Xi), \quad \mbox{for $j=1,\ldots N$.}
\]

What remains of $\sigma$ will be split into functions on whose support two frequency components are comparable. Observe that the supports of the $\nu_j$ are disjoint, therefore the $\Xi$-support of
\nm{eq:amp_remainder}{
\sigma(x,\Xi) -\sum_{j=0}^N \sigma_j(x,\Xi)
}
is contained in the set of all $\Xi$ for which no $\nu_j(\Xi)$ is equal to one. We define
\[
    \widetilde{\nu}(\Xi)=1-\lambda\brkt{\frac{\abs{\xi_1}^2}{c_3\abs{\Xi}^2}}\in\mathcal{C}^\infty(\R^{nN}\setminus 0).
\]
for some constant $0 < c_3 < 1$ (to be chosen momentarily) and
\[
    \widetilde{\nu}_j(\Xi):=\widetilde{\nu}(\xi_j,\Xi'_j).
\]
For fixed $k$, if $\Xi$ is not contained in the support of $\widetilde{\nu}_j$ for any $j\neq k$, then $\abs{\xi_j}^2 \leq c_3c_1\abs{\Xi}^2$ for all $j \neq k$ and consequently
\[
\abs{\xi_k}^2 \geq (1-c_3c_1(N-1))\abs{\Xi}^2.
\]
Thus, we choose $c_3$ so that $1-c_3c_1(N-1) > c_2$, and all $\Xi$ which are not contained in the support of $\widetilde{\nu}_j$ for any $j\neq k$ will be such that $\nu_k(\Xi) = 1$. Therefore the functions
\[
\Theta_{j,k}(\Xi) := \frac{\widetilde{\nu}_j(\Xi)\widetilde{\nu}_k(\Xi)}{\brkt{\sum_{\ell=1}^N \widetilde{\nu}_\ell(\Xi)}^2}
\]
are a smooth partition of the $\Xi$-support of \eqref{eq:amp_remainder} and $\abs{\xi_j} \approx \abs{\xi_k}$ on the support of $\Theta_{j,k}$. Defining
\[
\sigma_{j,k}(x,\Xi) = (1-\chi(\Xi))\Theta_{j,k}(\Xi)\brkt{\sigma(x,\Xi) -\sum_{\ell=0}^N \sigma_\ell(x,\Xi)}, \quad \mbox{for $j,k=1,\ldots N$}
\]
we have completed our decomposition of the amplitude $\sigma$ as
\m{
\sigma(x,\Xi) = \sigma_0(x,\Xi) + \sum_{j=1}^N \sigma_j(x,\Xi) + \sum_{j\neq k} \sigma_{j,k}(x,\Xi),
}
where $\sigma_0$ has compact $\Xi$-support, $\abs{\xi_j}$ dominates $\abs{\Xi}$ on the $\Xi$-support of $\sigma_j$, and $\abs{\xi_j} \approx \abs{\xi_k}$ on the $\Xi$-support of $\sigma_{j,k}$.

It is easy to check that if $\sigma \in S^m(n,N)$ then $\sigma_j$ and $\sigma_{j,k}$ are also in $S^m(n,N)$ for all $j,k=1,\dots,N$ and $\sigma_0 \in S^\mu(n,N)$ for all $\mu \in \R$.

\section{Boundedness results for $T_{\sigma_{j}}^\Phase$}\label{sec:sigmaj}

We will restrict our discussion to the amplitude $\sigma_1$. This will be sufficient for the treatment of an arbitrary $\sigma_j$ since a permutation of the frequency variables $\xi_1,\dots,\xi_N$ reduces the boundedness of $\sigma_j$ in one of the endpoint cases from Corollary \ref{cor:endpointcases} to an endpoint case for $\sigma_1$.

We begin by decomposing $\sigma_1$ in a similar fashion to Coifman and Meyer \cite{CM4}. The rough idea is to first introduce a Littlewood-Paley partition of unity in the $\xi_1$ variable. One can then make use of the fact $\abs{\Xi} \lesssim \abs{\xi_1}$ on the $\Xi$-support of $\sigma_1$ to see that, for each term in the Littlwood-Paley decomposition, one can introduce for free a second Littlewood-Paley cut off function in the variable $\xi_1 + \dots + \xi_N$ (that is, the ``dual'' frequency variable). The same support property allows one to also introduce low-frequency cut-off operators (written as $P_k^{u_j}$ below) in each of the $\xi_j$-variables ($j=2,\dots,N$) which restrict $\abs{\xi_j} \lesssim 2^k$ when $\abs{\xi_1} \approx 2^k$. For this purpose it is more useful to have that the squares of the functions form a partition of unity than the functions themselves, that is \eqref{eq:littlewoodpaleysquare} below holds instead of \eqref{eq:littlewoodpaley}. So although the $\psiF_k$ in following construction are essentially a Littlewood-Paley partition of unity in the sense of Definition \ref{def:LP}, we emphasis that they depart slightly from the definition there.

We introduce a positive, radial, radially decreasing, smooth cut-off function $\vth \colon \R^n \to \R$ which satisfies $\vth(\xi) = 1$ if $|\xi| \leq 1$ and $\vth(\xi) = 0$ if $|\xi| \geq 2$ and define the non-negative functions $\th_k$, $\psiF_k$ and $\phiF_k$ via the relations
\begin{itemize}

\item $\th_k(\xi) := \vth(2^{3-k}\xi)$,

\item $\psiF_k(\xi)^2 := \vth(2^{-1-k}\xi)^2 - \vth(2^{2-k}\xi)^2$, and

\item $\phiF_k(\xi)^2 := \vth(2^{-3-k}\xi)^2 - \vth(2^{4-k}\xi)^2$.

\end{itemize}
Using the support properties of these functions, it is easy to verify the following facts:

\begin{enumerate}

\item \label{psisupport1} $\psiF_k(\xi) = 1$ for $2^{k-1} \leq |\xi| \leq 2^{k+1}$;

\item \label{psisupport2} $\psiF_k(\xi) = 0$ for $|\xi| \leq 2^{k-2}$ and $2^{k+2} \leq |\xi|$;

\item $\th_k(\eta) = 1$ for $|\eta| \leq 2^{k-3}$;

\item $\th_k(\eta) = 0$ for $2^{k-2} \leq |\eta|$;

\item $\phiF_k(\xi+\eta) = 1$ for $2^{k-3} \leq |\xi+\eta| \leq 2^{k+3}$; 

\item $\phiF_k(\xi+\eta) = 0$ for $|\xi+\eta| \leq 2^{k-4}$ and $2^{k+4} \leq |\xi+\eta|$.

\end{enumerate}
Given the support properties of $\sigma_1$, it follows that if $\psiF_k(\xi_1) \neq 0$ and $\sigma_1(x,\Xi) \neq 0$, then
\m{
|2^{-k}\Xi_1'| \leq \frac{|2^{-k}\xi_1|}{32\sqrt{N-1}} \leq \frac{2^{2}}{32\sqrt{N-1}} = \frac{2^{-3}}{\sqrt{N-1}} \quad 
}
which implies that $\th_k(\xi_j) = 1$ for $j=2,\ldots, N$.

Likewise, when $\psiF_k(\xi_1) \neq 0$ and $\sigma_1(x,\Xi) \neq 0$, then
\nma{ineqforcut-off}{
\frac{1}{4} - \frac{1}{8} &\leq |2^{-k}\xi_1| - \sqrt{N-1}|2^{-k}\Xi_{1}'| \\
&\leq |2^{-k}(\xi_1+\dots+\xi_N)| \leq |2^{-k}\xi| + \sqrt{N-1}|2^{-k}\Xi_{1}'| \leq 4 + \frac{1}{8} < 8 \\
&\quad \mbox{which implies} \quad \phiF_k(\xi_1+\dots+\xi_N) = 1.
}
Observe that on the support of $\sigma_1$, 
\[
    \abs{\Xi}^2=\abs{\xi_1}^2+\abs{\Xi_1'}^2\geq 1/64,\qquad \abs{\xi_1}^2 \geq 32^2(N-1) \abs{\Xi_1'}^2.
\]
Then 
\[
    1/64\leq \brkt{1+\frac{1}{ 32^2(N-1)}}\abs{\xi_1}^2, 
\]
and so
\[
    \abs{\xi_1}^2\geq \frac{16(N-1)}{1+32^2(N-1)}>0.
\]
Finally, it follows directly from the definition above that each function $\psiF_k$ is radial, real-valued, and
\nm{eq:littlewoodpaleysquare}{
\sum_{k=-\infty}^\infty  \psiF_k(\xi)^2 = 1 \quad \mbox{for all $\xi \neq 0$.}
}

Using these facts, there exists $k_0\in \Z$ (independent of $x$) such that we can write $T^\Phase_{\sigma_1}$ as
\nma{decomp12}{
&T^\Phi_{\sigma_1}(f_1,\ldots,f_N)(x) \\
&= \int_{\R^{nN}} \sum_{k\geq k_0} \psiF_k(\xi_1)^2 \prod_{j=2}^N\th_k(\xi_j)^2 \phiF_k(\xi_1+\ldots+\xi_N)^2 \sigma_1(x,\Xi) \widehat{f_1}(\xi_1)\prod_{j=2}^N\widehat{f_j}(\xi_j)\, e^{ix\cdot(\xi_1 +\dots+\xi_N)} e^{i\Phase(\Xi)} \ddd\Xi
}
which by setting 
\begin{equation}\label{the complete phase}
\Phi(x, \Xi):= x\cdot(\xi_1 +\dots+ \xi_N)+ \phase_0(\xi_1+\dots+\xi_N) + \sum_{j=1}^N\phase_j(\xi_j),
\end{equation}
can in turn be written as
\nma{decomp11}{
& \int_{\R^{nN}} \sum_{k\geq k_0}^\infty {\mathfrak a}(k,x,2^{-k}\Xi)\left[|\xi_1+\ldots+\xi_N|^{m_{0}}\phiF_k(\xi_1+\ldots+\xi_N)\right] \times \\
& \quad \left[|\xi_1|^{m_1}|2^{-k}\xi_1|^{m-m_0-m_1}\psiF_k(\xi_1)\widehat{f}(\xi_1)\right] \left[\prod_{j=2}^N2^{km_j}\th_k(\xi_j)^2 \widehat{f_j}(\xi_j)\right]  e^{i\Phase(x,\Xi)}  \ddd\Xi
}
where $m = \sum_{j=0}^N m_j$, and
\m{
{\mathfrak a}(k,x,\Xi) = \sigma_1(x,2^{k}\Xi)\,\psiF_1(\xi_1)\prod_{j=2}^N\th_1(\xi_j)\phiF_1(\xi_1+\ldots+\xi_N)\left(\frac{2^{-k}}{|\xi_1|}\right)^{m-m_0}\left(\frac{2^{-k}}{|\xi_1+\ldots+\xi_N|}\right)^{m_0}.
}
If we introduce a high frequency cut-off $\chi_0$ that satisfies 
\begin{itemize}

\item $\chi_0(\xi) = 1$ for $|\xi| \geq 2^{k_0-4}$ and
\item $\chi_0(\xi) = 0$ for $|\xi| \leq 2^{k_0-5}$,
\end{itemize}
we can use \eqref{ineqforcut-off}, and \eqref{psisupport1} and \eqref{psisupport2} above to rewrite \eqref{decomp11} as
\ma{
& \int_{\R^{nN}} \sum_{k\geq k_0}^\infty {\mathfrak a}(k,x,2^{-k}\Xi)\left[|\xi_1+\ldots+\xi_N|^{m_{0}}\chi_0(\xi_1+\ldots+\xi_N)\phiF_k(\xi_1+\ldots+\xi_N)\right] \times \\
& \quad \left[|\xi_1|^{m_1}|2^{-k}\xi_1|^{m-m_0-m_1}\chi_0 (\xi_1)\psiF_k(\xi_1)\widehat{f}(\xi_1)\right] \left[\prod_{j=2}^N2^{km_j}\th_k(\xi_j) \widehat{f_j}(\xi_j)\right]  e^{i\Phase(x,\Xi)}  \ddd\Xi.
}

Making use of the Fourier inversion formula, we can write
\m{
{\mathfrak a}(k,x,\Xi) = \int \frac{{\mathfrak m}(k,x,U)}{(1+|U|^2)^M}\, e^{i\,\Xi\cdot U}\, \ddd U,\qquad U=(u_1,\ldots,u_N),
}
for a smooth bounded function ${\mathfrak m}$. This means we can then write $T^\Phi_{\sigma_1}(f_1,\ldots,f_N)(x)$ as a weighted average in $U$ of
\begin{equation}
\label{t_main representation formula}
\begin{split}
&\sum_{k=k_0}^\infty {\mathfrak m}(k,x,U)\int \left[|\xi_1+\ldots+\xi_N|^{m_0}\phiF_k(\xi_1+\ldots+\xi_N)\chi_0(\xi_1+\ldots+\xi_N)\right] \times \\
& \quad \left[|\xi_1|^{m_1}|2^{-k}\xi_1|^{m-m_0-m_1}\chi_0 (\xi_1)\psiF_k(\xi_1)e^{i2^{-k}\xi_1\cdot u_1}\widehat{f_1}(\xi)\right] \times \\
& \quad    \left[\prod_{j=2}^N2^{km_j}\th_k(\xi_j) \widehat{f_j}(\xi_j)e^{i2^{-k} \xi_j\cdot u_j}\right]  e^{i\Phase(x,\Xi)}  \ddd\Xi.
\end{split}
\end{equation}
%Observe that the integral term in \eqref{t_main representation formula} can be written as
%\[
%\begin{split}
%  T^{\phase_0}_{|\bullet|^{m_0}\phiF_k(\bullet)\chi_0(\bullet)} \left(\left(T^{\phase_1}_{|\bullet|^{m_1}|2^{-k}\bullet|^{m-m_0-m_1}\widehat{\psi_k}(\bullet)e^{i2^{-k} \bullet\cdot u_1}\chi_0 (\bullet)}(f_1)\right) \prod_{j=2}^N\left(T^{\phase_j}_{2^{km_j}\th_k(\bullet)e^{i2^{-k} \bullet\cdot u_j}}(f_j)\right) \right)(x).
%\end{split}
%\]
Finally we can write \eqref{t_main representation formula} as
\nm{t_piecedecomp}{
 B(f_1,\ldots,f_N)(x)=\sum_{k=k_0}^\infty M_{\mathfrak m}\circ T^{\phase_0}_{b_0} \circ Q_{k}^0\left[(Q_{k}^{u_1} \circ T^{\phase_1}_{b_1})(f_1)\prod_{j=2}^N(P_{k}^{u_j} \circ T^{\phase_j}_{b_j})(f_j)\right](x),
}
where
\begin{align*}
\widehat{Q_k^0(f)}(\xi) &= \phiF_k(\xi)\widehat{f}(\xi), & b_0(\xi) &= |\xi|^{m_0}\chi_0(\xi), \\
\widehat{Q_k^{u_1}(f)}(\xi) &= |2^{-k}\xi|^{m-m_0-m_1}\psiF_k(\xi)e^{i2^{-k} \xi\cdot u_1}\widehat{f}(\xi), & b_1(\xi) &= |\xi|^{m_1}\chi_0 (\xi), \\
\widehat{P_k^{u_j}(f)}(\xi) &= \th_k(\xi)e^{i2^{-k} \xi\cdot u_j}\widehat{f}(\xi), & b_j(k,\xi) &= 2^{km_j}\omegaF_k(\xi),
\end{align*}
for $j=2,\dots,N$, $\omegaF_k(\xi) := \th_k(\xi/2)$ is a bump function equal to one on the support of $\th_k$, and $M_{\mathfrak m}$ denotes multiplication by ${\mathfrak m}$.\footnote{\label{abiguity}The notation $Q^0_k$ and $Q^{u_1}_k$ is potentially ambiguous as $Q^{u_1}_k|_{u_1=0}$ is not the same operator as $Q^0_k$. However, in practice no confusion need arise, so to avoid a profusion of notation, we tolerate this imprecision.}

The position of the operator $M_{\mathfrak m}$ and the fact that ${\mathfrak m}$ depends on both $k$ and $x$, causes problems if we wish to make use of various square function and Carleson measure estimates to estimate norms of \eqref{t_piecedecomp}. We can overcome the problems by observing that this dependency is in fact periodic. Indeed, since $Q_k^0 = (Q_{k-1}^0 + Q_k^0 + Q_{k+1}^0)\circ Q_k^0$ and $Q_k^0 \circ Q_{k'}^0 \equiv 0$ if $|k-k'| \geq 2$, we can write
\m{
M_{{\mathfrak m}} \circ T^{\phase_0}_{b_0} \circ Q_{k}^0
= \left(\sum_{k'=k-1}^{k+1} T_{k,k'}^{U}\right) \circ Q_{k}^0 \\
= \left(\sum_{\ell=-1}^1\sum_{k'-k\equiv\ell\Mod{3}} T_{k'+\ell,k'}^{U}\right) \circ Q_{k}^0
}
where $T_{j,k}^{U}$ is the FIO with amplitude ${\mathfrak m}(j,x,U)\, b_0 (\xi)\,\phiF_{k}(\xi)$ and phase $\phase_0$. Observe that
\[
\mathcal{T}_{k}^U:=\sum_{\ell=-1}^{1}\sum_{k'-k\equiv\ell\Mod{3}} T_{k'+\ell,k'}^{U}
\]
is periodic in $k$ with period $3$, and is an FIO with amplitude in $S^{m_0}$. 
Thus \eqref{t_piecedecomp} can be rewritten as
\m{
    \sum_{\ell=0}^2 \mathcal{T}_\ell^{U} \left({B}_\ell(f_1,\ldots,f_N)\right)(x),
}
where
\nma{the magic operator}{
    & {B}_\ell(f_1,\ldots,f_N)(x) \\
    &:={ \sum_{k\equiv \ell\Mod{3},\,k\geq k_0 } \chi_0(2D)\,Q_{k}^0  \left[(Q_{k}^{u_1} \circ T^{\phase_1}_{b_1})(f_1)\prod_{j=2}^N(P_{k}^{u_j} \circ T^{\phase_j}_{b_j})(f_j)\right](x),}}
and $\chi_0$ is the same high-frequency cut-off introduced above (and is a symbol belonging to $S^0$). Now, by Theorem \ref{linearhpthm}, each $\mathcal{T}_\ell^{U}$ is a bounded operator on $X^p$ (with norms uniform in $U$) and so the boundedness of $T_{\sigma_1}^\Phase$ is reduced to studying the boundedness of $B_\ell$. In the remainder of this section, we prove this boundedness in each of the endpoint cases from Corollary \ref{cor:endpointcases}. Due to the symmetry of \eqref{the magic operator} in the indicies $j=2,\dots,N$ we only need to consider endpoint cases $(p_0,\dots,p_N)$ which are distinct within the equivalence class of permutations of $(p_2,\dots,p_N)$.

\subsection{Boundedness with the target space $L^2$.} 

In this case we take $m_0=0$. By duality and \eqref{the magic operator} we have
\ma{
& \quad \norm{{B}_\ell(f_1,\ldots,f_N)}_{L^2} \\
&= \sup_{\|f_0\|_{L^2}=1} \abs{\int f_0(x){B}_\ell(f_1,\ldots,f_N)(x) \dd x} \\
%& \norm{\sum_{k\equiv \ell\Mod{3},\,k\geq k_0 } \chi_0 (2D)\,Q_{k}^0\left[(Q_{k}^{u_1} \circ T^{\phase_1}_{b_1})(f_1)\prod_{j=2}^N(P_{k}^{u_j} \circ T^{\phase_j}_{b_j})(f_j)\right]}_{L^2} \\
&= \sup_{\|f_0\|_{L^2}=1} \left|\sum_{k\equiv \ell\Mod{3},\,k\geq k_0 } \int Q_{k}^0 (\chi_0(2D)f_0)(x)(Q_{k}^u \circ T^{\phase_1}_{b_1})(f_1)(x)\prod_{j=2}^N(P_{k}^{u_j} \circ T^{\phase_j}_{b_j})(f_j)(x) \, \dd x\right| \\
&\leq \sup_{\|f_0\|_{L^2}=1} \left(\sum_{k\equiv \ell\Mod{3},\,k\geq k_0 } \int |Q_{k}^0(\chi_0(2D)f_0)(x)|^2 \,\dd x\right)^{1/2} \times \\
&\quad \left(\sum_{k\equiv \ell\Mod{3},\,k\geq k_0 } \int |(Q_{k}^u \circ T^{\phase_1}_{b_1})(f_1)(x)\prod_{j=2}^N(P_{k}^{u_j} \circ T^{\phase_j}_{b_j})(f_j)|^2\, \dd x\right)^{1/2}.
}
For the first factor above we just use the quadratic estimate
\m{%\nm{t_quadest}{
\left(\sum_{k\equiv \ell\Mod{3},\,k\geq k_0 } \int |Q_{k}^0(\chi_0(2D)f_0)(x)|^2 \, \dd x \right)^{1/2} \lesssim \norm{\chi_0(2D)f_0}_{L^2}\lesssim 1.
}
Thus it remains to control
\nm{t_l2targetsecondfactor}{
\left(\sum_{k\equiv \ell\Mod{3},\,k\geq k_0 } \int |(Q_k^{u_1}\circ T^{\phase_1}_{b_1})(f_1)(x)\prod_{j=2}^N(P_{k}^{u_j} \circ T^{\phase_j}_{b_j})(f_j)|^2\, \dd x\right)^{1/2},
}
and precisely how this is done depends on the endpoint case considered, so we consider each case in turn. 
%%%%%%%%%%%%%%%%%%%%%%%%%%%%%%%%%%%%%%%%%%%%%%%%%%%%%%%%%%%%%%%%%
\subsubsection{$\bmo\times\dots\times\bmo\times L^2 \to L^2$} %%%%%%%%%%%%%%%%%%%%%%%%%%%%%%%%%%%%%%%%%%%%%%%%%%%%%%%%%%%%%%%%%

Here we take $n\geq 2,$ $m_j= -(n-1)/2$, $f_j \in \bmo$ 
for $j=1,\ldots, N-1$, $m_N=0$ and $f_N \in L^2$. By Theorem \ref{linearhpthm} we know that $T^{\phase_1}_{b_1}(f_1) \in \mathrm{BMO}$ when $f_1 \in \bmo$. This implies that
\m{\dd \mu(x,t) = \sum_{k\in \Z} |(Q_{k}^{u_1}\circ T^{\phase_1}_{b_1})(f_1)(x)|^2 \dd x\, \delta_{2^{-k}}(t),}
where $\delta_{2^{-k}}$ is a Dirac mass at the point $2^{-k}$, is a Carleson measure with the Carleson norm bounded by a constant multiple of $\Vert f_1 \Vert_{\bmo}$. Moreover, the non-tangential maximal function of $(x,t) \mapsto (P_{k}^{u_N} \circ T^{\phase_N}_{b_N})(f_N)(x)\delta_{2^{-k}}(t)$ is in $L^2$ when $f_N \in L^2$. Thus, to control \eqref{t_l2targetsecondfactor} with \eqref{ineq:carll2} and conclude the proof in this endpoint case, it is enough to apply \eqref{eq:uniform_bounddness} from the following lemma to $P_{k}^{u_j} \circ T^{\phase_j}_{b_j}$ for each $j=2,\ldots, N-1$. 

\begin{lemma}\label{lem:Wase-Lin} Let 
\[
    m=-(n-1)\abs{\frac{1}{p}-\frac{1}{2}},\qquad n/(n+1)<p\leq \infty.
\]
Let 
\[
    b(k,\xi) = 2^{km}\omegaF_k(\xi),\qquad \widehat{P_k^{u}(g)}(\xi)=\th_k(\xi)e^{i2^{-k} \xi\cdot u}\widehat{g}(\xi), 
    \]
where $\omegaF_k$ and $\th_k$ are the cut-off functions defined above.
It follows that
\begin{equation}\label{eq:hp_composition}
\sup_k \norm{(P_{k}^{u} \circ T^{\phase}_{b})(f)}_{h^p}\lesssim \norm{f}_{h^p},
\end{equation}
and for $n\geq 2$ one also has for $m=-(n-1)/2$
\begin{equation}\label{eq:uniform_bounddness}
    \sup_k \norm{(P_{k}^{u} \circ T^{\phase}_{b})(f)}_{L^\infty}\lesssim \norm{f}_{\bmo},\qquad \mbox{and} \qquad \sup_k \norm{(P_{k}^{u} \circ T^{\phase}_{b})(f)}_{h^1}\lesssim \norm{f}_{L^1}.
\end{equation}
\end{lemma}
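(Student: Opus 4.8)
The plan is to strip the modulation off $P_k^u$ by translation invariance, reduce the composition to a single linear Fourier integral operator $T^{\phase}_{\th_k}$, and then invoke Theorem \ref{linearhpthm} for \eqref{eq:hp_composition}, the local-$H^1$ characterisation \eqref{eq:local_H1} together with the Seeger--Sogge--Stein kernel bound \eqref{eq:claimkernel} for the $L^1\to h^1$ endpoint, and duality for the $\bmo\to L^\infty$ endpoint. For the reduction, note that $\widehat{(P_k^u\circ T^{\phase}_b)f}(\xi)=e^{i2^{-k}\xi\cdot u}\th_k(\xi)\,\widehat{T^{\phase}_b f}(\xi)$, so $(P_k^u\circ T^{\phase}_b)f$ is a translate of $\th_k(D)T^{\phase}_b f$; since $h^p$, $L^p$, $\bmo$, $L^\infty$ and $L^1$ are all translation invariant it suffices to bound $\th_k(D)\circ T^{\phase}_b$. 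As $T^{\phase}_b$ is the Fourier multiplier with symbol $2^{km}\omegaF_k(\xi)e^{i\phase(\xi)}$ and $\omegaF_k\equiv 1$ on $\supp\th_k$, we get $\th_k(D)\circ T^{\phase}_b=2^{km}T^{\phase}_{\th_k}$, the FIO with amplitude $\th_k$ and phase $\phase$. Everything thus reduces to controlling $2^{km}T^{\phase}_{\th_k}$ uniformly in $k$.

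\textbf{Proof of \eqref{eq:hp_composition}.} Absorb the factor $2^{km}$ into the amplitude: write $\th_k(\xi)=2^{-km}\langle\xi\rangle^{m}b_k(\xi)$ with $b_k(\xi):=2^{km}\langle\xi\rangle^{-m}\th_k(\xi)$. Since $m\le 0$ and $\supp\th_k\subseteq\{|\xi|\lesssim 2^{k}\}$, on that support $2^{km}\langle\xi\rangle^{-m}\lesssim 1$, while each derivative hitting $\th_k$ produces a factor $\lesssim 2^{-k}\lesssim\langle\xi\rangle^{-1}$; the Leibniz rule then shows $b_k\in S^0$ with seminorms uniform in $k$ (the finitely many small values of $k$, for which $\supp\th_k$ does not reach $|\xi|\sim 1$, being harmless since the operators are then fixed and bounded). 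Hence $a_k:=\langle\xi\rangle^{m}b_k\in S^{m}$ uniformly in $k$ and $2^{km}T^{\phase}_{\th_k}=T^{\phase}_{a_k}$. As $m=-(n-1)|1/p-1/2|$ and $n/(n+1)<p\le\infty$, Theorem \ref{linearhpthm} gives $\norm{T^{\phase}_{a_k}f}_{X^p}\lesssim\norm{f}_{X^p}$ with constant depending only on the uniform $S^{m}$-seminorms of $a_k$ and on $\phase$, which is \eqref{eq:hp_composition}.

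\textbf{The endpoint $L^1\to h^1$ of \eqref{eq:uniform_bounddness}.} Now $m=-(n-1)/2$ and $n\ge 2$. By \eqref{eq:local_H1} it suffices to bound $\norm{T^{\phase}_{\th_k}g}_{L^1}$ and $\norm{\mathfrak{R}_l((1-\psi_0(D))T^{\phase}_{\th_k}g)}_{L^1}$ by $2^{k(n-1)/2}\norm{g}_{L^1}$, for then multiplication by $2^{km}=2^{-k(n-1)/2}$ gives the claim. Both operators are FIOs with phase $\phase$ and amplitude in $S^0$ supported in $\{|\xi|\lesssim 2^{k}\}$ (the Riesz factor $-i\xi_l/|\xi|$ being a smooth $S^0$ multiplier on the annulus $\supp((1-\psi_0)\th_k)$), so it is enough to show that any such FIO has $L^1\to L^1$ norm $\lesssim 2^{k(n-1)/2}$. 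Split the amplitude into its part on $\{|\xi|\le 1\}$, whose kernel is $\lesssim\langle x-y\rangle^{-n-\varepsilon}$ by Lemma \ref{main low frequency estim} (a $k$-uniform $L^1$ kernel), and its dyadic blocks at frequencies $1\lesssim 2^{j}\lesssim 2^{k}$. For each block run the second dyadic decomposition into the $O(2^{j(n-1)/2})$ cones $\Gamma_j^{\nu}$ and apply \eqref{eq:claimkernel} with $m=0$ and $\alpha=0$: integrating in $y$ over the $2^{-j}\times 2^{-j/2}\times\cdots\times 2^{-j/2}$ plate gives $\int|K_j^{\nu}(x,y)|\dd y\lesssim 1$, hence $\int|K_j(x,y)|\dd y\lesssim 2^{j(n-1)/2}$; summing over $0\le j\lesssim k$ (a geometric sum dominated by its top term, since $n\ge 2$) yields $\lesssim 2^{k(n-1)/2}$.

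\textbf{The endpoint $\bmo\to L^\infty$ of \eqref{eq:uniform_bounddness}.} This follows by duality: the adjoint of the Fourier multiplier $T^{\phase}_b$ is $T^{-\phase}_b$ and $(P_k^u)^*=P_k^{-u}$, so $(P_k^u\circ T^{\phase}_b)^*=T^{-\phase}_b\circ P_k^{-u}$, and since $-\phase$ is again a phase function in the sense of Definition \ref{defn:linear phase}, the previous step applied with $-\phase$ shows this adjoint maps $L^1\to h^1$ uniformly in $k$ and $u$; as $(h^1)^*=\bmo$ and $(L^1)^*=L^\infty$, dualising gives $\sup_k\norm{(P_k^u\circ T^{\phase}_b)f}_{L^\infty}\lesssim\norm{f}_{\bmo}$. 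The only genuinely nontrivial ingredient is the sharp $2^{k(n-1)/2}$ bound for the $L^1$-operator norm of the frequency-truncated FIO $T^{\phase}_{\th_k}$, obtained by summing single-block Seeger--Sogge--Stein estimates; everything else is the reduction to $x$-independent amplitudes, the scaling of $\th_k$ into $S^{m}$, and duality.
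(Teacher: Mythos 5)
Your proof is correct, but it reaches the key estimates \eqref{eq:uniform_bounddness} by a genuinely different route than the paper. The paper proves the $\bmo\to L^\infty$ bound \emph{directly}: it splits $b=b^\flat+b^\sharp$ with a low/high frequency cut-off, handles $b^\flat$ with the kernel bound of Lemma \ref{main low frequency estim}, and for $b^\sharp$ replaces $P_k^u\circ T^\phase_{b^\sharp}$ by $P_k^u\circ R_k\circ T^\phase_\gamma$ with $R_k=\sum_{j\le k}2^{(k-j)m}Q_j$ and $\gamma(\xi)=\chi_0(\xi)|\xi|^{-(n-1)/2}$, invoking the $\mathrm{BMO}$--$L^\infty$ bound for $R_k$ from Lemma~4.8 of \cite{Monster} (this is where $n\ge 2$ enters, through $\delta\in(0,\tfrac{n-1}{2})$) together with the global $\bmo$-boundedness of order $-(n-1)/2$ FIOs from Theorem \ref{linearhpthm}; the $L^1\to h^1$ bound is then obtained by duality. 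You run the duality in the opposite direction: you prove $L^1\to h^1$ directly, reducing via \eqref{eq:local_H1} to the assertion that the frequency-truncated multiplier $T^\phase_{\th_k}$ (and its composition with $\mathfrak{R}_l(1-\psi_0)(D)$) has $L^1$-operator norm $\lesssim 2^{k(n-1)/2}$, which you get by summing the single-block Seeger--Sogge--Stein kernel bounds \eqref{eq:claimkernel} over the $O(2^{j(n-1)/2})$ cones and over $2^j\lesssim 2^k$; here $n\ge2$ enters through the geometric summability of $2^{j(n-1)/2}$ (for $n=1$ one picks up a factor $k$, consistent with the counterexample cited in the remark following the lemma). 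The self-adjointness/commutativity computation $(P_k^u\circ T^\phase_b)^*=P_k^{-u}\circ T^{-\phase}_b$ and $h^1$--$\bmo$ duality then give the $\bmo\to L^\infty$ bound. Your argument is more self-contained (it reuses only \eqref{eq:claimkernel} and Young's inequality, rather than importing the $R_k$ machinery from \cite{Monster}) and makes the role of $n\ge2$ very transparent; the paper's route avoids redoing kernel estimates and recycles the operator $R_k$, which it needs again elsewhere (e.g.\ in Section \ref{sec:sigmaj}). One point worth a sentence in your write-up: when dualising, $P_k^u\circ T^\phase_b$ must be given a meaning on $\bmo$, since its symbol is not smooth at $\xi=0$; the paper's $b^\flat/b^\sharp$ splitting does this explicitly, whereas in your argument the operator on $\bmo$ is the adjoint-defined extension, which should be said (or reconciled with the natural definition via the same low/high frequency splitting). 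Your treatment of \eqref{eq:hp_composition} coincides with the paper's (uniform $S^m$ bounds for the amplitude plus Theorem \ref{linearhpthm}), with the modulation removed by translation invariance.
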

\begin{proof}
The proof of \eqref{eq:hp_composition} follows from the fact that the amplitude of $P_{k}^{u} \circ T^{\phase}_{b}$ is in $S^m$ uniformly in $k$. By duality, self-adjointness of the  operators involved and interpolation, the second inequality in \eqref{eq:uniform_bounddness} follows from the first.

In order to establish the first inequality in \eqref{eq:uniform_bounddness}, we write $b=b^\flat+b^\sharp$ where
\begin{equation}\label{decomposition of bj}
    b^\flat(k,\xi)=b(k,\xi)(1-\chi_0(\xi)),\quad \mbox{and}\quad b^\sharp(k,\xi)=b(k,\xi)\chi_0(\xi).
\end{equation}
Now since $m\leq 0$ and $1-\chi_0$ is a low frequency cut-off, one can throw away the $\omegaF$ in the definition of $b$ which would then eliminate the $k$-dependency in $b^\flat$. Then by the kernel estimates for the FIOs with amplitude $b^\flat$ (see e.g. Lemma \ref{main low frequency estim}), for $f\in \bmo$ we have that 
\[
        \norm{P_k^{u} T_{b^\flat}^{\phase}(f)}_{L^\infty}\lesssim \norm{T_{b^\flat}^{\phase}(f)}_{L^\infty}\lesssim \norm{(1-\chi_0)(D)f}_{L^\infty}\lesssim \norm{f}_{\bmo}.
\]

 In order to ameliorate $(P_{k}^{u} \circ T^{\phase}_{b^\sharp})(f)$ so that we can better understand its action on $\bmo$ functions, we employ an argument from \cite{Monster}*{page 27}.
According to that argument, for $n\geq 2$ and $m=\frac{-(n-1)}{2}$, one introduces an operator 
\begin{equation}\label{defn: Rk}
   R_k= \sum_{j=k_0}^{k} Q_j 2^{(k-j)m}
\end{equation}
 with some positive $k_0$,  which enables one to replace $(P_{k}^{u} \circ T^{\phase}_{b^\sharp})(f)$ by $P_k^{u} \circ R_k \circ T^{\phase}_{\gamma}(f)$, for $n\geq 2$, where $\gamma(\xi):= \chi_0(\xi)|\xi|^m \in S^{-(n-1)/2}$.
 
 By Lemma 4.8 in \cite{Monster}, the operator $R_k$ has a kernel $K_k$ which has the following properties:
\begin{equation*}%\label{t_eq:cancellation}
	\int K_k(z) \dd z=0;
\end{equation*}
and for each $0<\delta<\frac{n-1}{2}$ the estimates
\begin{equation*}%\label{t_eq:decay}
	\abs{K_k(x-y)}\lesssim {2^{kn}}{\brkt{1+\frac{\abs{x-y}}{2^{-k}}}^{-n-\delta}}
\end{equation*}
and
\begin{equation*}%\label{t_eq:mean_value}
	\abs{K_k(x-y)-K_k(x-y')}\lesssim {2^{k(n+1)}}\abs{y-y'}
\end{equation*}
hold for all $x,y,y'\in \R^n$ and $k \in \Z$. Therefore the operator $R_k$ satisfies
\begin{equation*}%\label{t_eq:Lq_bound}
	\sup_{k\in\Z}\norm{R_k f}_{L^q}\lesssim \norm{f}_{L^q},\qquad 1\leq q< \infty,
\end{equation*}
and
\begin{equation*}\label{t_eq:BMO_bound}
\sup_{k\in\Z}\norm{R_k f}_{L^\infty}\lesssim \norm{f}_{\mathrm{BMO}}.
\end{equation*}
Using this $\mathrm{BMO}$--$L^\infty$ boundedness (valid for $n\geq 2$), the global $\bmo$-boundedness of FIOs with amplitudes in $S^{-(n-1)/{2}}$ (i.e.\ Theorem \ref{linearhpthm}) and the $L^\infty$-boundedness of $P_k^u$ yield that 
\[
        \sup_{k}\norm{P_k^{u} T_{b^\sharp}^{\phase}(f)}_{L^\infty}\lesssim \norm{\chi_0(D)f}_{\mathrm{BMO}}\leq \norm{f}_{\bmo}.
\]
\end{proof}

\begin{rem}
Here we see that the assumption $n\geq2$ is used in the proof of \emph{Lemma \ref{lem:Wase-Lin}} to ensure that $\delta$ can be chosen positive. This is not just a feature of the proof and is in fact necessary. As was shown in \emph{Proposition 5.3 in \cite{Monster}}, the bilinear operator in dimension $n=1$ with amplitude $\sigma \equiv 1$ and phase functions $\phase_1 = x\xi +|\xi|$, $\phase_2 = x\eta$ and $\phase_3 =0$ fails to be bounded from $\bmo \times L^2$ to $L^2$.
\end{rem}

%%%%%%%%%%%%%%%%%%%%%%%%%%%%%%%%%%%%%%%%%%%%%%%%%%%%%%%%%%%%%%%%%%%%%%%%%%%%%%%%
\subsubsection{$L^2 \times \bmo \times\dots\times \bmo \to L^2$}
%%%%%%%%%%%%%%%%%%%%%%%%%%%%%%%%%%%%%%%%%%%%%%%%%%%%%%%%%%%%%%%%%%%%%%%%%%%%%%%
% \subsubsection{$L^2 \times\bmo\times \dots \times \bmo \to L^2$ boundedness}\label{t_sec:extracancellation}

Here we take  $m_1=0$ and $f_1 \in L^2$ and $m_j= -(n-1)/2$, $f_j \in \bmo$ 
for $j=2,\ldots, N$. Noting that $b_1$ does not depend on $k$, the quadratic estimate
\m{
\left(\sum_{k\equiv \ell\Mod{3},\,k\geq k_0 } \int |(Q_k^{u_1}\circ T^{\phase_1}_{b_1})(f_1)(x)|^2 \, \dd x \right)^{1/2} \lesssim \norm{T^{\phase_1}_{b_1}(f_1)}_{L^2}\lesssim \norm{f_1}_{L^2}
}
follows with the help of \eqref{ineq:quadraticestimate} and Theorem \ref{linearhpthm}.
Applying this and \eqref{eq:uniform_bounddness} to the expression  \eqref{t_l2targetsecondfactor} yields
\[
\begin{split}
    &\left(\sum_{k\equiv \ell\Mod{3},\,k\geq k_0 } \int \abs{(Q_k^{u_1}\circ T^{\phase_1}_{b_1})(f_1)(x)\prod_{j=2}^N(P_{k}^{u_j} \circ T^{\phase_j}_{b_j})(f_j)}^2\, \dd x\right)^{1/2}\\
    &\lesssim \left(\sum_{k\equiv \ell\Mod{3},\,k\geq k_0 } \int |(Q_k^{u_1}\circ T^{\phase_1}_{b_1})(f_1)(x)|^2\, \dd x\right)^{1/2} \prod_{j=2}^N\sup_k\norm{(P_{k}^{u_j} \circ T^{\phase_j}_{b_j})(f_j)}_{L^\infty} \\
    &\leq \norm{f_1}_{L^2}\prod_{j=2}^N\norm{f_j}_{\bmo}.
\end{split}
\]

\subsection{Boundedness with the target space $h^1$.}

Now we take $m_0 = -(n-1)/2$ and so by duality and \eqref{the magic operator} we have
\nma{testing_with_bmo}{
& \norm{B_\ell(f_1,\dots,f_N)}_{h^1}  = \\
&\sup_{\|f_0\|_{\bmo}=1} \left|\sum_{k\equiv \ell\Mod{3},\,k\geq k_0 } \int  \,Q_{k}^0( \chi_0 (2D)f_0)\, (Q_{k}^{u_1} \circ T^{\phase_1}_{b_1})(f_1)\prod_{j=2}^N(P_{k}^{u_j} \circ T^{\phase_j}_{b_j})(f_j) \, \dd x\right|.
}
Since $f_0\in \bmo$, we have that $\chi_0(2D)f_0 \in \mathrm{BMO}$. Therefore
\m{
    \dd\mu_{f_0}(x,t) := \sum_{k\equiv \ell\Mod{3},\,k\geq k_0 } |Q_{k}^0( \chi_0(2D)f_0)(x)|^2 \dd x \,\delta_{2^{-k}}(t)
}
is a Carleson measure with Carleson norm not exceeding a constant multiple of $\Vert f_0 \Vert_{\bmo}^2$.

%%%%%%%%%%%%%%%%%%%%%%%%%%%%%%%%%%%%%%%%%%%%%%%%%%%%%%%%%%%%%%
\subsubsection{$\bmo\times \dots\times\bmo\times h^1 \to h^1$}\label{sec:bmoh1h1}
%%%%%%%%%%%%%%%%%%%%%%%%%%%%%%%%%%%%%%%%%%%%%%%%%%%%%%%%%%%%%%

Here we take $m_j = -(n-1)/2$ for $j=0, \dots\, N$, $f_j\in \bmo$, $j=0,\dots, N-1$ and $f_N\in h^1$ in \eqref{testing_with_bmo}.

Since $f_1\in \bmo$, Theorem \ref{linearhpthm} and \eqref{defn:bmo} yield that $T^{\phase_1}_{b_1}(f_1)\in \mathrm{BMO}$ and therefore
\m{
\dd \mu_{f_1}(x,t) := \sum_{k\equiv \ell\Mod{3},\,k\geq k_0 } |Q_{k}^u \circ T^{\phase_1}_{b_1}(f_1)(x)|^2 \, \dd x\, \delta_{2^{-k}}(t)
}
is a Carleson measure. Since we also have that
\ma{
&\left|Q_{k}^0 \circ \chi_0 (2D)(f_0)(x)\right|\left|Q_{k}^{u_1} \circ T^{\phase_1}_{b_1}(f_1)(x)\right| \\
&\leq \frac{1}{2} \left(\frac{\norm{f_1}_\bmo}{\norm{f_0}_\bmo}\left|Q_{k}^0 \circ \chi_0 (2D)(f_0)(x)\right|^2 +  \frac{\norm{f_0}_\bmo}{\norm{f_1}_\bmo}\left|Q_{k}^{u_1} \circ T^{\phase_1}_{b_1}(f_1)(x)\right|^2\right)
}
the measure
\m{
    \dd \mu_{f_0,f_1}(x,t):=\sum_{k\equiv \ell\Mod{3},\,k\geq k_0 } Q_{k}^0 \circ \chi_0 (2D)(f_0)(x) Q_{k}^{u_1} \circ T^{\phase_1}_{b_1}(f_1)(x) \dd x \delta_{2^{-k}}(t)
}
is also Carleson with Carleson norm bounded by $\norm{f_0}_\bmo\norm{f_1}_\bmo$. Moreover, by \eqref{eq:uniform_bounddness}, even
\nm{eq:carlon}{
    \dd\mu_{f_0\dots, f_{N-1}}(x,t) :=\sum_{k} \prod_{j=2}^{N-1}(P_{k}^{u_j} \circ T^{\phase_j}_{b_j})(f_j)(x)\, \delta_{2^{-k}}(t)\dd\mu_{f_0,f_1}(x,t)
}
is a Carleson measure.

At this point we repeat the decomposition \eqref{decomposition of bj} of $b_N$ into the sum $b^\flat_N+b^\sharp_N$.\footnote{This is necessary because $b_N$ depends on $k$. Had it not done so, the proof of this endpoint could be completed by arguing as in \eqref{est:decomcandd} directly with $b_N$ instead of $b^\flat_N$.} We can see that since $m_N=-(n-1)/2$ and $1-\chi_0$ is a low frequency cut-off, one can disregard the $\omegaF_k$ in the definition of $b^\flat_N$ which means $b^\flat_N$ is independency of $k$. Then the characterisation \ref{eq:hp} of local Hardy spaces in Definition \ref{def:Triebel} and \eqref{carleson estim} yields
\nma{est:decomcandd}{
 &\left|\sum_{k\equiv \ell\Mod{3},\,k\geq k_0 } \int (P_{k}^{u_N} \circ T^{\phase_N}_{b^\flat_N})(f_N)\, \dd\mu_{f_0,\dots, f_{N-1}}(x,2^{-k})\right|\\
 &\lesssim \prod_{j=0}^{N-1}\norm{f_j}_\bmo \int_{\R^n}\sup_{k\geq k_0} \sup_{|x-y|<2^{-k}} |(P_{k}^{u_N} \circ T^{\phase_N}_{b^\flat_N})(f_N)|\, \dd x \\
&\lesssim  \Vert  T^{\phase_N}_{b^\flat_N}(f_N)\Vert_{h^1} \prod_{j=0}^{N-1}\norm{f_j}_\bmo \lesssim \prod_{j=0}^{N-1} \norm{f_j}_\bmo \Vert f_N \Vert_{h^1}.
}

To deal with $(P_{k}^{u_N} \circ T^{\phase_N}_{b^\sharp_N})(f_N)$ we continue to follow the proof of Lemma \ref{lem:Wase-Lin} and replace it by $P_k^{u_N} \circ R^{N}_k \circ T^{\phase_N}_{\gamma}(f_N)$, where $\gamma\in S^{m_N}$. Lemma \ref{lem:carlesoncomp} leads us to conclude that
\m{
\sum_k R^{N*}_{k}(\dd\mu_{f_0,\dots, f_{N-1}}(\cdot,2^{-k}))(x) \, \delta_{2^{-k}}(t)\dd x
}
is also a Carleson measure. This via \eqref{ineq:carlh1} yields
\ma{
 &\left|\sum_{k\equiv \ell\Mod{3},\,k\geq k_0 } \int (P_{k}^{u_N} \circ T^{\phase_N}_{b^\sharp_N})(f_N)\, \dd \mu_{f_0,\dots, f_{N-1}}(x,2^{-k})\right|\\
  &= \left|\sum_{k\equiv \ell\Mod{3},\,k\geq k_0 } \int (P_{k}^{u_N} \circ T^{\phase_N}_{\gamma})(f_N)\, R^{N*}_{k}(\dd \mu_{f_0,\dots, f_{N-1}}(\cdot,2^{-k})) \, \dd x\right|\\
 &\lesssim  \Vert  T^{\phase_N}_{\gamma}(f_N)\Vert_{h^1} \prod_{j=0}^{N-1}\norm{f_j}_\bmo \lesssim \prod_{j=0}^{N-1} \norm{f_j}_\bmo \Vert f_N \Vert_{h^1}.
}

%%%%%%%%%%%%%%%%%%%%%%%%%%%%%%%%%%%%%%%%%%%%%%%%%%%%%%%%%%%%%%%%%%%%%%%%%%%%%%%%
\subsubsection{$h^1 \times \bmo\times \cdots\times\bmo \to h^1$}
%%%%%%%%%%%%%%%%%%%%%%%%%%%%%%%%%%%%%%%%%%%%%%%%%%%%%%%%%%%%%%%%%%%%%%%%%%%%%%%

Here we take $m_j = -(n-1)/2$, for $j=0,\dots, \, N$, $f_0\in \bmo$, $f_1\in h^1$ and $f_j\in\bmo$, $j=2,\,\dots,\,N$. 
Using \eqref{eq:uniform_bounddness} from Lemma \ref{lem:Wase-Lin}, we have 
\[
    \sup_{k}\norm{P_k^{u_j} T_{b_j}^{\phase_j}(f_j)}_{L^\infty}\lesssim  \norm{f_j}_{\bmo}, \qquad j=2,\ldots, N.
\]
We now take 
\begin{equation*}
    \begin{split}
         G(x) &= \chi_0(2D)(f_0)(x),\qquad 
        v(2^{-k},x) = \prod_{j=2}^{\infty}P_k^{u_j} \circ T^{\phase_j}_{b_j}(f_j), \qquad \mbox{and}\\
        F(x) &= (Q_{k}^{u_1} \circ T^{\phase_1}_{b_1})(f_1) = (Q_{k}^{u_1} \circ T^{\phase_1}_{b_1} \circ \chi_0 (2D))(f_1),
    \end{split}
\end{equation*} 
and thereafter apply Proposition \ref{cor:monster 1}, Theorem \ref{linearhpthm} and \eqref{defn:bmo} to the right-hand side of \eqref{testing_with_bmo} to obtain
\ma{
 \sup_{\|f_0\|_{\bmo}=1} & \left|\sum_{k\equiv \ell\Mod{3},\,k\geq k_0 } \int  \,Q_{k}^0( \chi_0 (2D)f_0)\, (Q_{k}^{u_1} \circ T^{\phase_1}_{b_1})(\chi_0 (2D)f_1)\prod_{j=2}^N(P_{k}^{u_j} \circ T^{\phase_j}_{b_j})(f_j) \, \dd x\right| \\
&\lesssim  \Vert      \chi_0 (2D)f_1\Vert_{H^1}\prod_{j=2}^{N} \Vert f_j\Vert_{\bmo}\lesssim \Vert      f_1\Vert_{h^1}\prod_{j=2}^{N} \Vert f_j\Vert_{\bmo},
}
{{where we have also used \eqref{eq:local_H1} in dealing with $\Vert \chi_0 (2D)f_1\Vert_{H^1}.$}}
%%%%%%%%%%%%%%%%%%%%%%%%%%%%%%%%%%%%%%%%%%%%%%%%%%%%%%%%%%%%%%%%%%%%%%%%%%%%%%%%
\subsubsection{$L^2\times L^2 \times \bmo\times \ldots\times \bmo\to h^1$}
%%%%%%%%%%%%%%%%%%%%%%%%%%%%%%%%%%%%%%%%%%%%%%%%%%%%%%%%%%%%%%%%%%%%%%%%%%%%%%%

We choose $m_1=m_2=0$, $f_1,f_2\in L^2$, $m_j=-\frac{n-1}{2}$ for $j=0$ and $j=3,\ldots N$ and $f_0\in \bmo$. Starting once again with \eqref{testing_with_bmo}, we have that for all $\|f_0\|_{\bmo}=1$
\nma{ineq:l2l2h1}{
 & \left|\sum_{k\equiv \ell\Mod{3},\,k\geq k_0 } \int  \,Q_{k}^0( \chi_0 (2D)f_0)\, (Q_{k}^{u_1} \circ T^{\phase_1}_{b_1})(f_1)\prod_{j=2}^N(P_{k}^{u_j} \circ T^{\phase_j}_{b_j})(f_j) \, \dd x\right| \\
&\lesssim  \brkt{\int \sum_{k\geq k_0} \abs{Q_k^{u_1}\circ T_{b_1}^{\phase_1}(f_1)}^2\dd x}^{1/2} \\
&\left(\sum_{k\equiv \ell\Mod{3},\,k\geq k_0 } \int  \abs{P_{k}^{u_2} \circ T^{\phase_2}_{b_2}(f_2) }^2 \abs{Q_{k}^0( \chi_0 (2D)f_0)}^2\, \prod_{j=3}^N \abs{P_{k}^{u_j} \circ T^{\phase_j}_{b_j}(f_j) }^2\, \dd x\right)^{1/2}
}
Since $f_j\in \bmo$ for $j=0,3,\ldots, N$, we can argue as we did for \eqref{eq:carlon} to conclude
\[
    \sum_{k\geq k_0} \abs{Q_{k}^0( \chi_0 (2D)f_0)}^2\, \prod_{j=3}^N \abs{P_{k}^{u_j} \circ T^{\phase_j}_{b_j}(f_j) }^2\dd x\, \delta_{2^{-k}}(t)
\]
defines a Carleson measure with Carleson norm bounded by $\norm{f_0}_{\bmo}^2\prod_{j=3,\ldots N }\norm{f_j}_{\bmo}^2$. The $L^2$-boundedness of FIOs from Theorem \ref{linearhpthm}, together with a quadratic estimate \eqref{ineq:quadraticestimate} in the first factor and a non-tangential maximal function estimate \eqref{ineq:carll2} in the second, yield that \eqref{ineq:l2l2h1} is bounded by 
\begin{equation*}
    \norm{f_1}_{L^2}\times \norm{f_2}_{L^2}\prod_{j=3}^N \norm{f_j}_{\bmo}.  
\end{equation*}
We would also like to note that when $N=2$ then the functions $f_j$, with $j=3, \dots, N$ do not appear in the estimates above.
%%%%%%%%%%%%%%%%%%%%%%%%%%%%%%%%%%%%%%%%%%%%%%%%%%%%%%%%%%%%%%%%%%%%%%%%%%%%%%%%
\subsubsection{$\bmo \times L^2 \times L^2 \times \bmo\times \ldots\times \bmo\to h^1$}
%%%%%%%%%%%%%%%%%%%%%%%%%%%%%%%%%%%%%%%%%%%%%%%%%%%%%%%%%%%%%%%%%%%%%%%%%%%%%%%

We choose $m_2=m_3=0$, $f_2,f_3\in L^2$, $m_j=-\frac{n-1}{2}$ for $j=0,1,4,\ldots N,$ and $f_0, \, f_1$ are both in $\bmo$. Continuing from \eqref{testing_with_bmo}, we have that for all $\|f_0\|_{\bmo}=1$
\ma{
 & \left|\sum_{k\equiv \ell\Mod{3},\,k\geq k_0 } \int  \,Q_{k}^0( \chi_0 (2D)f_0)\, (Q_{k}^{u_1} \circ T^{\phase_1}_{b_1})(f_1)\prod_{j=2}^N(P_{k}^{u_j} \circ T^{\phase_j}_{b_j})(f_j) \, \dd x\right| \\
&\lesssim  \brkt{\int \sum_{k\geq k_0} \abs{P_k^{u_3} T_{b_3}^{\phase_3}(f_3)}^2\abs{Q_k^{u_1}\circ T_{b_1}^{\phase_1}(f_1)}^2\dd x}^{1/2} \times \\
&\left(\sum_{k\equiv \ell\Mod{3},\,k\geq k_0 } \int  \abs{P_{k}^{u_2} \circ T^{\phase_2}_{b_2}(f_2) }^2 \abs{Q_{k}^0( \chi_0 (2D)f_0)}^2\, \prod_{j=4}^N \abs{P_{k}^{u_j} \circ T^{\phase_j}_{b_j}(f_j) }^2\, \dd x\right)^{1/2}
}
Since $f_j\in \bmo$ for $j=0$ and $j=4,\ldots, N$, arguing once again as we did for \eqref{eq:carlon}, we see
\[
    \sum_{k\geq k_0} \abs{Q_{k}^0( \chi_0 (2D)f_0)}^2\, \prod_{j=4}^N \abs{P_{k}^{u_j} \circ T^{\phase_j}_{b_j}(f_j) }^2\dd x\, \delta_{2^{-k}}(t),
\]
is a Carleson measure with Carleson norm bounded by $\norm{f_0}_{\bmo}^2\prod_{j=4}^N\norm{f_j}_{\bmo}^2$, and similarly 
\[
    \sum_{k\geq k_0} \abs{Q_k^{u_1}\circ T_{b_1}^{\phase_1}(f_1)}^2 \dd x\, \delta_{2^{-k}}(t)
\]
defines a Carleson measure with Carleson norm bounded by $\norm{f_1}_{\bmo}^2$. The $L^2$ boundedness of FIOs (Theorem \ref{linearhpthm} and the non-tangential maximal function estimate \eqref{ineq:carll2} yields that the right-hand side of the inequality above is bounded by 
\[
    \norm{f_2}_{L^2}\times \norm{f_3}_{L^2}\prod_{j=4}^N \norm{f_j}_{\bmo}.  
\]

\subsection{Boundedness with the target space $\bmo$.}

Here the only case to consider is the $\bmo\times\dots\times\bmo \to \bmo$ boundedness of the operator in \eqref{the magic operator}. In this case we take $m_j = -(n-1)/2$, $j=0,\dots\, N$, $f_0\in h^1$ and $f_j\in \bmo$ for $j=1,\dots, N$. Using \eqref{the magic operator} and duality, pairing against  $f_0$, we must bound 

\nma{testing_with_h1}{
\sup_{\|f_0\|_{h^1}=1} \left|\sum_{k\equiv \ell\Mod{3},\,k\geq k_0 } \int {Q_{k}^0(\chi_0(2D)f_0)(x)}\, (Q_{k}^{u_1} \circ T^{\phase_1}_{b_1})(f)(x)\prod_{j=2}^N(P_{k}^{u_j} \circ T^{\phase_j}_{b_j})(f_j) \, \dd x\right|.
}

To bound this further we apply Proposition \ref{cor:monster 1}. We take $F(x) = \chi_0 (2D)f_0(x)$, $G(x) = T^{\phase_1}_{b_1}(f_1)(x)$ and $v(2^{-k},x) = \prod_{j=2}^N(P_{k}^{u_j} \circ T^{\phase_j}_{b_j})(f_j)(x)$. 
Clearly  $\norm{F}_{H^1} \lesssim \norm{f_0}_{h^1}=1$, and \eqref{defn:bmo} and Theorem \ref{linearhpthm} yield that $\norm{G}_{\BMO} \lesssim \norm{f_1}_{\bmo}$ and applying \eqref{eq:uniform_bounddness} from Lemma \ref{lem:Wase-Lin}, we have that $$\norm{v(2^{-k},t)}_{L^{\infty}_{k,x}} \leq \prod_{j=2}^N \norm{P_{k}^{u_j} \circ T^{\phase_j}_{b_j}(f_j)(x)}_{L^{\infty}_{k,x}} \lesssim \prod_{j=2}^N\norm{f_j}_\bmo.$$ %and that
%\m{
%d\mu_G(x,t) = \sum_{k\equiv \ell\Mod{3},\,k\geq k_0 } |Q_{k}^{u_1} \circ T^{\phase_1}_{b_1}(f_1)(x)|^2 dx \delta_{2^{-k}}(t)
%}
%is a Carleson measure with norm controlled by $\norm{f_1}_\bmo^2$. To conclude, we have shown
It follows that \eqref{testing_with_h1} is bounded by $\prod_{j=1}^N\norm{f_j}_\bmo$, as required.

%%%%%%%%%%%%%%%%%%%%%%%%%%%%%%%%%%%%%%%%%%%%%%%%
%%%%%%%%%%%%%%%%%%%%%%%%%%%%%%%%%%%%%%%%%%%%%%%%
%%%%%%%%%%%%%%%%%%%%%%%%%%%%%%%%%%%%%%%%%%%%%%%%
\section{Boundedness results for $T^\Phase_{\sigma_{j,k}}$.}\label{sec:sigmaij}

Our analysis of $T^\Phase_{\sigma_{j,k}}$ begins very similarly to that of $T^\Phase_{\sigma_{j}}$ in Section \ref{sec:sigmaj}. Just as in that case, the symmetry of the operators form under permutations of the frequency variables allows us to restrict our attention to just one of the $\sigma_{j,k}$, the argument for all the others being identical. We choose to study $\sigma_{1,2}$, so we have that $|\xi_1|$ and $|\xi_2|$ are comparable to each other. More precisely, we know that
\m{
c_1c_3\abs{\Xi}^2 \leq \abs{\xi_1}^2 \quad \mbox{and} \quad c_1c_3\abs{\Xi}^2 \leq \abs{\xi_2}^2 \quad \mbox{so} \quad c_1c_3\abs{\xi_1}^2 \leq \abs{\xi_2}^2 \leq \frac{1}{c_1c_3} \abs{\xi_1}^2
}
on the $\Xi$-support of $\sigma_{1,2}(x,\Xi)$, with the constants $c_1$ and $c_3$ being the same as those in Section~\ref{sec:freqdecom}.  We choose an integer $k_1$ so that $2^{-k_1} \leq c_1c_3$ and define $\widehat{\zeta}_k$ via
\begin{itemize}

\item $\widehat{\zeta}_k(\xi)^2 := \vth(2^{-k-k_1-2}\xi)^2 - \vth(2^{3+k_1-k}\xi)^2$,

\end{itemize}
so that when $\psiF_k(\xi_1) \neq 0$ and $\sigma_1(x,\Xi) \neq 0$, then
\ma{
2^{-k_1-2}  &\leq c_1c_3|2^{-k}\xi_1| \leq |2^{-k}\xi_2| \leq \frac{1}{c_1c_3}|2^{-k}\xi_1| \leq 2^{k_1+2} \\
&\quad \mbox{which implies} \quad \widehat{\zeta}_k(\xi_2) = 1.
}
With the same choice of $\psiF_k$, $\th_k$ and $\chi_0$ from Section \ref{sec:sigmaj}, we can argue as we did there to write $T^\Phase_{\sigma_{1,2}}$ as
\nma{decompsigma111}{
&T^\Phi_{\sigma_{1,2}}(f_1,\ldots,f_N)(x) \\
&= \int_{\R^{nN}} \sum_{k\geq k_0} \psiF_k(\xi_1)^2\widehat{\zeta}_k(\xi_2)^2   \sigma_{1,2}(x,\Xi)
\chi_0(\xi_1)\widehat{f_1}(\xi_1) \times \\
& \qquad \chi_0(\xi_2)\widehat{f_2}(\xi_2)\prod_{j=3}^N\th_k(\xi_j)^2\widehat{f_j}(\xi_j) e^{ix\cdot(\xi_1 +\dots+\xi_N)+i\Phase(\Xi)} \ddd\Xi,
}
and then define
\m{
{\mathfrak a}(k,x,\Xi) = \sigma_{1,2}(x,2^{k}\Xi)\psiF_1(\xi_1)\psiF_1(\xi_2)\prod_{j=2}^N\th_1(\xi_j)\left(\frac{2^{-k}}{|\xi_1|}\right)^{m-m_2}\left(\frac{2^{-k}}{|\xi_2|}\right)^{m_2},
}
where once again $\sum_{j=1}^N m_j = m$, so that using the notation \eqref{the complete phase} the expression \eqref{decompsigma111} can be rewritten as
\nma{decompsigma112}{
& \sum_{k\geq k_0}\int_{\R^{nN}}  {\mathfrak a}(k,x,2^{-k}\Xi)2^{km_0}\widehat{\theta_k}(\xi_1+\ldots+\xi_N) \abs{2^{-k}\xi_1}^{m - m_2 - m_1}\psiF_k(\xi_1) \times \\
&\qquad \abs{\xi_1}^{m_1}\chi_0(\xi_1)\widehat{f_1}(\xi_1)\widehat{\zeta}_k(\xi_2)\abs{\xi_2}^{m_2}\chi_0(\xi_2)\widehat{f_2}(\xi_2)\prod_{j=3}^N2^{km_j}\th_k(\xi_j)^2\widehat{f_j}(\xi_j) e^{i\Phase(x,\Xi)} \ddd\Xi.
}
%and observe that 
%\[
%    \abs{\partial^\beta_{x}\partial^{\alpha}_{\Xi}{\mathfrak a}(k,x,\Xi)}\lesssim 1,
%\]
%uniformly in $k$. 
Just as in Section \ref{sec:sigmaj}, the Fourier inversion formula yields
\m{
{\mathfrak a}(k,x,\Xi) = \int \frac{{\mathfrak m}(k,x,U)}{(1+|U|^2)^M}\, e^{i\,\Xi\cdot U}\, \, \ddd U,\qquad U=(u_1,\ldots,u_N),
}
for a smooth bounded function ${\mathfrak m}$.
%So we can write
%\ma{
%&T^\Phi_{\sigma_{1,2}}(f_1,\ldots,f_N)(x) \\
%&= \sum_{k\geq k_0}\int_{\R^{nN}}  {\mathfrak a}(k,x,2^{-k}\Xi)\widehat{\theta_k}(\xi_1+\ldots+\xi_N) 2^{km}\abs{2^{-k}\xi_1}^{m_1}\psiF_k(\xi_1)\abs{2^{-k}\xi_2}^{m_2}\widehat{\zeta}_k(\xi_2)\\ &\qquad \prod_{j=3}^N\th_k(\xi_j)^2  
%\chi_0(\xi_1)\widehat{f_1}(\xi_1)\chi_0(\xi_2)\widehat{f_2}(\xi_2)\prod_{j=3}^N\widehat{f_j}(\xi_j) e^{i\Phase(x,\Xi)} \ddd\Xi.
%}
So \eqref{decompsigma112} can be written as a weighted average in $U =(u_1,\dots,u_N)$ of
\begin{equation*}
\begin{split}
&\sum_{k=k_0}^\infty {\mathfrak m}(k,x,U)\int 2^{km_0}\th_k(\xi_1+\ldots+\xi_N)   \left[\abs{2^{-k}\xi_1}^{m-m_1-m_2}\psiF_k(\xi_1)e^{i2^{-k}\xi_1\cdot u_1}\abs{\xi_1}^{m_1}\chi_0 (\xi_1)\widehat{f_1}(\xi_1)\right]\\
&\qquad
   \left[\widehat{\zeta}_k(\xi_2)e^{i2^{-k}\xi_2\cdot u_2}\abs{\xi_2}^{m_2}\chi_0 (\xi_2)\widehat{f_2}(\xi_2)\right] \left[\prod_{j=3}^N2^{km_j}\th_k(\xi_j) \widehat{f_j}(\xi_j)e^{i2^{-k} \xi_j\cdot u_j}\right]  e^{i\Phase(x,\Xi)}  \ddd\Xi.
\end{split}
\end{equation*}
Therefore we need to prove the boundedness of the following operator
\nma{t_piecedecomp for sigma N+1}{
 & D(f_1,\ldots,f_N)(x) \\
 &=\sum_{k=k_0}^\infty  M_{\mathfrak m}\circ T^{\phase_0}_{d_0} \circ P_{k}^0\left[(Q_{k}^{u_1} \circ T^{\phase_1}_{d_1})(f_1)\, (Q_{k}^{u_2} \circ T^{\phase_2}_{d_2})(f_2)\,\prod_{j=3}^N(P_{k}^{u_j} \circ T^{\phase_j}_{d_j})(f_j)\right](x),
}
where
\begin{align*}
\widehat{P_k^0(f)}(\xi) &= \th_k(\xi)\widehat{f}(\xi), & d_0(k,\xi) &= 2^{km_0}\omegaF_k(\xi), \\
\widehat{Q_k^{u_1}(f)}(\xi) &= \abs{2^{-k}\xi}^{m-m_1-m_2}\psiF_k(\xi)e^{i2^{-k} \xi\cdot u_1}\widehat{f}(\xi), & d_1(\xi) &= |\xi|^{m_1}\chi_0 (\xi), \\
\widehat{Q_k^{u_2}(f)}(\xi) &= \widehat{\zeta}_k(\xi)e^{i2^{-k} \xi\cdot u_2}\widehat{f}(\xi), & d_2(\xi) &= |\xi|^{m_2}\chi_0 (\xi), \\
\widehat{P_k^{u_j}(f)}(\xi) &= \th_k(\xi)e^{i2^{-k} \xi\cdot u_j}\widehat{f}(\xi), & d_j(k,\xi) &= 2^{km_j}\omegaF_k(\xi),
\end{align*}
for $j=3,\ldots,N$, $\omegaF_k(\xi) := \th_k(\xi/2)$ is a bump function equal to one on the support of $\th_k$, and $M_{\mathfrak m}$ denotes multiplication by ${\mathfrak m}$.\footnote{The same ambiguity of notation arises here as in \eqref{t_piecedecomp}. See footnote \ref{abiguity}.}

We now proceed to consider all the necessary endpoint cases. Just as in Section~\ref{sec:sigmaj}, due to the symmetry of the form of \eqref{t_piecedecomp for sigma N+1} in the indicies $j=1,2$ and $j=3,\dots,N$ we only need to consider endpoint cases $(p_0,\dots,p_N)$ which are distinct within the equivalence class of permutations of $(p_1,p_2)$ and $(p_3,\dots,p_N)$. In each case we fix 
\[
    \frac{1}{p_0}=\sum_{j=1}^{N }\frac{1}{p_j},\qquad 1\leq p_j\leq \infty,\quad j=0,\ldots,N,
\]
and 
\[
    m_j:=-(n-1)\abs{\frac{1}{2}-\frac{1}{p_j}},\qquad j=0,\ldots, N,
\]
and consider $f_j \in X^{p_j}$ for $j=1,\dots,N$. Using duality in \eqref{t_piecedecomp for sigma N+1} it is enough to estimate 
\begin{equation}\label{eq:Lu-Be}
\sum_{k\geq k_0}
\int P_{k}^0\circ T^{-\phase_0}_{d_0} (M_{\mathfrak m} f_0)\left[(Q_{k}^{u_1} \circ T^{\phase_1}_{d_1})(f_1)\, (Q_{k}^{u_2} \circ T^{\phase_2}_{d_2})(f_2)\,\prod_{j=3}^N(P_{k}^{u_j} \circ T^{\phase_j}_{d_j})(f_j)\right]\dd x,  
\end{equation}
for $f_0 \in X^{p_0'}$ with $\norm{f_0}_{X^{p_0'}} = 1$.

Comparing this analysis with that of $T_{\sigma_j}^\Phase$, observe that what was $Q_{k}^0$ (a multiplier supported on an annulus) in \eqref{t_piecedecomp} has been replaced by $P_{k}^0$ (a multiplier supported on a ball) in \eqref{t_piecedecomp for sigma N+1}. This means that our technique to remove the dependency of $M_{\mathfrak m}$ on $k$ will no longer be directly applicable. In the case $p_0 = 1$ and $p_0 = \infty$, the $k$ dependency is not problematic, and methods already introduced in Section \ref{sec:sigmaj} can be successfully applied again here. In the case $p_0 = 2$ this dependency is more problematic. The possibility of replacing $(P_{k}^{u} \circ T^{\phase}_{d})(f)$ with $P_k^{u} \circ R_k \circ T^{\phase}_{\gamma}(f)$ as in Lemma \ref{lem:Wase-Lin} is not available to us, since $m_0=0$, and therefore this method does not allow us to use \eqref{ineq:quadraticestimate} to estimate the $f_0$ term. We present an alternative approach which can successfully deal with this $k$-dependency in this case in Section \ref{sec:carlsmallnorm}.

%%%%%%%%%%%%%%%%%%%%%%%%%%%%%%%%%%%%%%%%%%%%%%%%
%%%%%%%%%%%%%%%%%%%%%%%%%%%%%%%%%%%%%%%%%%%%%%%%
\subsection{The endpoint cases with target space $L^2$}\label{sec:carlsmallnorm}

We write
\nm{eq:lpwithl2target}{
P^0_k = \sum_{\ell=k_0+1}^k Q_\ell + P^0_{k_0}
}
where $\widehat{Q_\ell(f)}(\xi) = \brkt{\th_\ell(\xi) - \th_{\ell-1}(\xi)}\widehat{f}(\xi)$ and so \eqref{eq:Lu-Be} is the sum of
\nma{eq:lowfreq11}{
&\sum_{k\geq k_0}
\int P_{k_0}^0\circ T^{-\phase_0}_{d_0} (M_{\mathfrak m} f_0)\left[(Q_{k}^{u_1} \circ T^{\phase_1}_{d_1})(f_1)\, (Q_{k}^{u_2} \circ T^{\phase_2}_{d_2})(f_2)\,\prod_{j=3}^N(P_{k}^{u_j} \circ T^{\phase_j}_{d_j})(f_j)\right]\dd x \\
&\lesssim \brkt{\sum_{k\geq k_0}
\int \abs{P_{k_0}^0\circ T^{-\phase_0}_{d_0} (M_{\mathfrak m} f_0) \, (Q_{k}^{u_2} \circ T^{\phase_2}_{d_2})(f_2)}^2 \dd x}^{1/2} \times \\
&\qquad \brkt{\sum_{k\geq k_0}
\int \abs{(Q_{k}^{u_1} \circ T^{\phase_1}_{d_1})(f_1)\,\prod_{j=3}^N(P_{k}^{u_j} \circ T^{\phase_j}_{d_j})(f_j)}^2\dd x}^{1/2}
}
and
\begin{align*}
&\sum_{k=k_0}^\infty\sum_{\ell=k_0}^k
\int Q_\ell\circ T^{-\phase_0}_{d_0} (M_{\mathfrak m} f_0)\left[(Q_{k}^{u_1} \circ T^{\phase_1}_{d_1})(f_1)\, (Q_{k}^{u_1} \circ T^{\phase_2}_{d_2})(f_2)\,\prod_{j=3}^N(P_{k}^{u_j} \circ T^{\phase_j}_{d_j})(f_j)\right]\dd x \\
&= \sum_{\ell=k_0}^\infty\sum_{k=\ell}^\infty
\int Q_\ell\circ T^{-\phase_0}_{d_0} (M_{\mathfrak m} f_0)\left[(Q_{k}^{u_1} \circ T^{\phase_1}_{d_1})(f_1)\, (Q_{k}^{u_1} \circ T^{\phase_2}_{d_2})(f_2)\,\prod_{j=3}^N(P_{k}^{u_j} \circ T^{\phase_j}_{d_j})(f_j)\right]\dd x \\
&= \sum_{\ell=k_0}^\infty\sum_{k=0}^\infty
\int Q_\ell\circ T^{-\phase_0}_{d_0} (M_{\mathfrak m} f_0)\left[(Q_{\ell+k}^{u_1} \circ T^{\phase_1}_{d_1})(f_1)\, (Q_{\ell+k}^{u_1} \circ T^{\phase_2}_{d_2})(f_2)\,\prod_{j=3}^N(P_{\ell+k}^{u_j} \circ T^{\phase_j}_{d_j})(f_j)\right]\dd x,
\end{align*}
where we remind the reader that now in the last expression $d_0$ and $M_{\mathfrak m}$ depend on $\ell+k$ and we have taken $m_0 = 0$. Given the frequency support properties on $Q_\ell\circ T^{-\phase_0}_{d_0}$, we can redefine $d_0\equiv1$ without changing the operator and so make it independent of $k+\ell$. Equally, the composition $Q_\ell\circ T^{-\phase_0}_{d_0} \circ M_{\mathfrak m}$ can be replaced by a finite sum of operators of the form $Q_\ell\circ T^{-\phase_0}_{d_0} \circ M_{k}$ where $M_k$ only depends on $k$ (and $x$ and $U$), in the same way as we obtained \eqref{the magic operator}. Thus our task is to bound 
\nma{highfreq11}{
&\sum_{\ell=k_0}^\infty \int Q_\ell\circ T^{-\phase_0}_{d_0} (M_k f_0) \, (Q_{k+\ell}^{u_1} \circ T^{\phase_1}_{d_1})(f_1)\, (Q_{k+\ell}^{u_1} \circ T^{\phase_2}_{d_2})(f_2) \,\prod_{j=3}^N(P_{k+\ell}^{u_j} \circ T^{\phase_j}_{d_j})(f_j)\dd x \\
&\lesssim \brkt{\sum_{\ell\geq k_0}
\int \abs{Q_\ell\circ T^{-\phase_0}_{d_0} (M_k f_0)}^2 \dd x}^{1/2} \times \\
&\qquad \brkt{\sum_{\ell\geq k_0}
\int \abs{(Q_{k+\ell}^{u_1} \circ T^{\phase_1}_{d_1})(f_1)\,(Q_{k+\ell}^{u_1} \circ T^{\phase_2}_{d_2})(f_2)\,\prod_{j=3}^N(P_{k+\ell}^{u_j} \circ T^{\phase_j}_{d_j})(f_j)}^2\dd x}^{1/2} \\
&\lesssim \norm{f_0}_{L^2}\brkt{\sum_{\ell\geq k_0}
\int \abs{(Q_{k+\ell}^{u_1} \circ T^{\phase_1}_{d_1})(f_1)\,(Q_{k+\ell}^{u_1} \circ T^{\phase_2}_{d_2})(f_2)\,\prod_{j=3}^N(P_{k+\ell}^{u_j} \circ T^{\phase_j}_{d_j})(f_j)}^2\dd x}^{1/2}
}
(where we made use of \eqref{ineq:quadraticestimate}) so that it is summable in $k$, plus we must, of course, bound \eqref{eq:lowfreq11}.

We begin by further estimating the first factor on the right-hand side of \eqref{eq:lowfreq11}. In each endpoint case below we will have $p_2=\infty$ so that
\begin{equation}
     \sum_{k\geq k_0} \abs{(Q_{k}^{u_2} \circ T^{\phase_1}_{d_2})(f_2)(x)}^2\dd x\delta_{2^{-k}}(t) 
\end{equation}
is always a Carleson measure with the Carleson norm bounded by $\|f_2\|_{\bmo}$. Observe also that Lemma~\ref{main low frequency estim} yields that
\nm{kernelestforlowfreq}{
(P_{k_0}^0\circ T^{-\phase_0}_{d_0}\circ M_{\mathfrak m}) (f_0) = (P_{k_0}^0\circ T^{-\phase_0}_{d_0} \circ P^0_k \circ M_{\mathfrak m}) (f_0)
= K * ((P^0_k \circ M_{\mathfrak m}) (f_0))
}
for $k \geq k_0$, with $|K(\cdot)| \lesssim \langle \cdot \rangle^{-n-\varepsilon}$.

Therefore using Minkowski's integral inequality and estimate \eqref{carleson estim}, the first factor on the right-hand side of \eqref{eq:lowfreq11} can be controlled using the non-tangential maximal function as
\[
\|f_2\|_{\bmo}\int \abs{K(z)}\left(\int \sup_{k\geq k_0, |y-x| \lesssim 2^{-k}} 
 |P_{k}^0 (M_{\mathfrak m} f_0)(y-z)|^2 \dd x\right)^{1/2}\, \dd z.
\]
 However, since $P_k$ is convolution with a Schwartz function scaled by a factor $2^{-k}$ and $M_\mathfrak{m}$ is uniformly bounded, we have that
$$\sup_{k\geq k_0, |y-x| \lesssim2^{- k}} 
 |P_{k}^0 (M_{\mathfrak m} f_0)(y-z)|\lesssim |(M f_0)(x-z)|,$$ where $M$ is the Hardy-Littlewood maximal function.
Thus we have the estimate
\begin{equation}\label{ineq:firstfactor}
\brkt{\sum_{k\geq k_0}
\int \abs{P_{k_0}^0\circ T^{-\phase_0}_{d_0} (M_{\mathfrak m} f_0) \, (Q_{k}^{u_2} \circ T^{\phase_2}_{d_2})(f_2)}^2 \dd x}^{1/2}
\lesssim \norm{f_0}_{L^2}\norm{f_2}_\bmo
\end{equation}
for the first factor in \eqref{eq:lowfreq11}.

We will see that to estimate \eqref{highfreq11} uniformly in $k$ is a similar task to that done in Section~\ref{sec:sigmaj}. We must, however, also obtain summability in $k$. The content of the next lemma is the observation that there is some decay in size of the Carleson norms that appear.

\begin{lem}\label{lem:smallcarlnorm}
If $n \geq 2$ and $f \in \bmo$ then, for $j=1,2$,
\m{
\dd \mu_k(x,t) = \sum_{k'=0}^\infty |(Q_{k+k'}^{u_j} \circ T^{\phase_1}_{d_j})(f)(x)|^2 \delta_{2^{-k'}}(t) \dd x
}
is a Carleson measure with Carleson norm $2^{-k/2}\norm{f}_{\bmo}^2$.
\end{lem}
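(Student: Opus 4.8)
The plan is to exploit the one structural feature that distinguishes $(Q_{k+k'}^{u_j}\circ T^{\phase_j}_{d_j})(f)$ from a generic Littlewood--Paley piece of a $\bmo$ function: the amplitude $d_j(\xi)=|\xi|^{m_j}\chi_0(\xi)$ has negative order $m_j=-(n-1)/2$ (in every endpoint case where Lemma~\ref{lem:smallcarlnorm} is invoked the space $\bmo$ corresponds to $p_j=\infty$, hence $m_j=-(n-1)/2$). Since $d_j$ is independent of $x$, the operator $S_l:=Q_l^{u_j}\circ T^{\phase_j}_{d_j}$ is a Fourier multiplier supported in $\{|\xi|\approx 2^l\}$, vanishing at the origin, of size $\lesssim 2^{lm_j}$; rescaling $\xi\mapsto 2^{-l}\xi$ one writes $S_l=2^{lm_j}\widetilde S_l$ with $\widetilde S_l$ an FIO with phase $\phase_j$ and amplitude in $S^0$, uniformly in $l$, so that $\widetilde S_l$ admits the Seeger--Sogge--Stein cone decomposition and kernel bounds of Section~\ref{global hp and bmo results} (cf. \eqref{eq:quadrseconddyadcond}--\eqref{eq:claimkernel}). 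By Plancherel and the bounded overlap of the annuli $\{|\xi|\approx 2^l\}$ this gives, for every $L\in\Z$ and every $h\in L^2$,
\begin{equation}\label{eq:sketch-l2decay}
\sum_{l\geq L}\int_{\R^n}|S_l h|^2\,\dd x\ \leq\ 2^{2Lm_j}\sum_{l}\int_{\R^n}|\widetilde S_l h|^2\,\dd x\ \lesssim\ 2^{-L(n-1)}\|h\|_{L^2}^2 .
\end{equation}
The hypothesis $n\geq 2$ is used exactly here, to guarantee $2^{-L(n-1)}\leq 2^{-L}\leq 2^{-k/2}$ whenever $L\geq k\geq 0$; this is what produces the claimed $2^{-k/2}$.

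Next I would reduce the Carleson-norm estimate to a bound over a single cube. Since each $S_l$ annihilates constants, for a cube $Q$ with $\ell(Q)=2^{-m_Q}$ the Carleson box integral of $\dd\mu_k$ over $Q$ equals $\sum_{l\geq L_Q}\int_Q|S_l f|^2\,\dd x$ with $L_Q:=k+(m_Q)_+\geq k$, and the task is to show this is $\lesssim 2^{-k/2}|Q|\,\|f\|_{\bmo}^2$. Set $\Lambda:=\sup_{|\xi|=1}|\nabla\phase_j(\xi)|$ (a constant depending only on $\phase_j$), let $B_Q$ be the ball concentric with $Q$ of radius $\asymp\max(\ell(Q),\Lambda)$, put $c:=f_{B_Q}$, and split $f=(f-c)\mathbf 1_{B_Q}+(f-c)\mathbf 1_{B_Q^{\,c}}$; by homogeneity of $\phase_j$ and \eqref{eq:claimkernel}, for $x\in Q$ the value $S_l f(x)$ is controlled, up to a rapidly decaying error, by $(f-c)\mathbf 1_{B_Q}$. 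The far term $(f-c)\mathbf 1_{B_Q^{\,c}}$ I would handle exactly as in the analogous estimates of \cite{Monster} and Section~\ref{global hp and bmo results}: insert \eqref{eq:claimkernel}, use $|f(y)-f_{B_Q}|\lesssim(1+\log(|y-x_Q|/\mathrm{rad}(B_Q)))\|f\|_{\BMO}$ on dyadic annuli, and sum the geometric series in $l\geq L_Q$ (convergent since $m_j<0$) and in the spatial index, obtaining $\lesssim 2^{-L_Q(n-1)}|Q|\,\|f\|_{\bmo}^2\leq 2^{-k/2}|Q|\,\|f\|_{\bmo}^2$.

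For the local term write $h:=(f-c)\mathbf 1_{B_Q}$, so that $\|h\|_{L^2}^2\lesssim|B_Q|\,\|f\|_{\BMO}^2\asymp\mathrm{rad}(B_Q)^n\,\|f\|_{\BMO}^2$. When $\ell(Q)\gtrsim\Lambda$, $\mathrm{rad}(B_Q)\asymp\ell(Q)$ and $|B_Q|\asymp|Q|$, so \eqref{eq:sketch-l2decay} with $L=L_Q$ gives directly $\sum_{l\geq L_Q}\int_Q|S_l h|^2\leq\sum_{l\geq L_Q}\int_{\R^n}|S_l h|^2\lesssim 2^{-L_Q(n-1)}|Q|\,\|f\|_{\BMO}^2\leq 2^{-k/2}|Q|\,\|f\|_{\bmo}^2$. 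The main obstacle is the case $\ell(Q)<\Lambda$, where $|B_Q|\asymp\Lambda^n$ may dwarf $|Q|$ and the crude inequality $\int_Q\leq\int_{\R^n}$ loses the factor $|B_Q|/|Q|$, which $2^{-L_Q(n-1)}$ cannot absorb. To recover this I would use that $T^{\phase_j}_{d_j}$ genuinely disperses: via $\widetilde S_l=\sum_\nu\widetilde S_l^{\,\nu}$, each $\widetilde S_l^{\,\nu}$ carries $h$ (supported in $B_Q$) onto a translate by $-\nabla\phase_j(\xi_l^\nu)$ of a $2^{-l/2}$-plate, and as $\nu$ ranges these translates sweep out a $\mathrm{rad}(B_Q)$-neighbourhood of the light surface $\{\nabla\phase_j(\xi):|\xi|\asymp 1\}$, a set of measure $\asymp\mathrm{rad}(B_Q)^n$, over which $S_l h$ is distributed rather than concentrated on $Q$. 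Combining the plate estimates \eqref{eq:quadrseconddyadcond}--\eqref{eq:claimkernel} with an almost-orthogonality ($TT^*$-type) argument in $\nu$ and $l$, one should obtain the localised $L^2$-smoothing bound
\[
\sum_{l\geq L_Q}\int_Q|S_l h|^2\,\dd x\ \lesssim\ \frac{|Q|}{\mathrm{rad}(B_Q)^n}\sum_{l\geq L_Q}\int_{\R^n}|S_l h|^2\,\dd x\ \lesssim\ \frac{|Q|}{\Lambda^n}\,2^{-L_Q(n-1)}\,\Lambda^n\,\|f\|_{\BMO}^2\ =\ 2^{-L_Q(n-1)}|Q|\,\|f\|_{\BMO}^2 ,
\]
again $\leq 2^{-k/2}|Q|\,\|f\|_{\bmo}^2$. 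Taking the supremum over $Q$ would then give $\|\dd\mu_k\|_{\mathcal C}\lesssim 2^{-k/2}\|f\|_{\bmo}^2$, and the case $j=2$ is identical with $\widehat\zeta_l$ replacing $\psiF_l$. I expect that localised dispersive estimate — essentially a rescaled local-smoothing bound for $T^{\phase_j}_{d_j}$ distilled from the Seeger--Sogge--Stein analysis — to be the only genuinely new ingredient; the rest is a routine reprise of the Carleson-measure machinery already in place.
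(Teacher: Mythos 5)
Your reduction to a single Carleson box and your treatment of cubes with $\ell(Q)\gtrsim 1$ (Plancherel on the annuli plus the order $-(n-1)/2$ of $d_j$) and of the far part $(f-c)\mathbf 1_{B_Q^{\,c}}$ are fine, but the whole content of the lemma sits in the case you defer to the ``localised $L^2$-smoothing bound'', and that bound is false as you state it. You are asking for
\[
\sum_{l\geq L_Q}\int_Q\abs{S_l h}^2\,\dd x\ \lesssim\ \frac{|Q|}{\mathrm{rad}(B_Q)^n}\sum_{l\geq L_Q}\int_{\R^n}\abs{S_l h}^2\,\dd x
\]
using only $\norm{h}_{L^2}$ and $\supp h\subset B_Q$. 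No $TT^*$/almost-orthogonality argument can give this, because an FIO with phase homogeneous of degree one can \emph{focus}: take $\phase(\xi)=|\xi|$, let $g(x)=e^{i2^l x\cdot e_1}\phi\brkt{2^l(x-x_Q)}$ be a frequency-$2^l$ bump supported in $Q$, and let $h$ be (a truncation to $B_Q$ of) $S_l^*g/\norm{S_l^*g}_{L^2}$, which lives on a unit-scale spherical shell inside $B_Q$. Then $\norm{S_lh}_{L^2(Q)}\geq \abs{\langle S_lh,g\rangle}/\norm{g}_{L^2}\approx \norm{S_l^*g}_{L^2}/\norm{g}_{L^2}\approx 2^{-l(n-1)/2}\approx\norm{S_lh}_{L^2(\R^n)}$, so the restriction to $Q$ captures a fixed fraction of the total $L^2$ mass and the factor $|Q|/\mathrm{rad}(B_Q)^n$ cannot be extracted. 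Dispersion spreads wave packets forward, but arbitrary $L^2$ data on a unit ball can be time-reversed data that refocuses onto a $2^{-l}$-ball inside $Q$ (and $l\geq L_Q\geq m_Q$ permits exactly this). The only way out is to use that $h$ comes from a $\bmo$ function, i.e.\ is essentially bounded, not merely square-integrable — which your sketch never does.

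This is precisely how the paper argues, and its route is different from yours: one first writes $Q_{k+\ell}^{u_j}\circ T^{\phase_j}_{d_j}=Q_{k+\ell}^{u_j}\circ T^{\phase_j}_{d_j}\circ\tilde Q_{k+\ell}$ with $\tilde Q_{k+\ell}\colon\bmo\to L^\infty$ uniformly in $k+\ell$, reducing to $f\in L^\infty$; then, for a ball $B=B(x_0,r)$, $r\leq 1$, one uses the Seeger--Sogge--Stein cone decomposition of Section \ref{sec:smallballs} and splits $f=g^\nu_j+h^\nu_j$ relative to the translated plates $\RR^\nu_j$ of dimensions $2r\times(2r^{1/2})^{n-1}$ centred at $x_0+\nabla\phase(\xi^\nu_j)$. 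The on-plate pieces are summed with the $2^{-j(n-1)/2}$ multiplier norm and the $L^\infty$ bound of $f$ \emph{on the plates} (total plate measure $\approx 2^{j(n-1)/2}r^{(n+1)/2}$), giving $2^{-j(n-1)/2}r^{-(n-1)/2}|B|\norm{f}_{L^\infty}^2$; the off-plate pieces are bounded pointwise by \eqref{ineq:secondkernelest}, giving $2^{-j(n-1)/2}2^{-j/2}r^{-1/2}\norm{f}_{L^\infty}$; summing over $2^{-\ell}\leq r$ with $j=k+\ell$ yields $\brkt{2^{-k(n-1)/2}+2^{-k/2}}|B|\norm{f}_{\bmo}^2$, which is where $n\geq 2$ enters (not merely to compare $2^{-k(n-1)}$ with $2^{-k/2}$ as in your global Plancherel step). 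So your proposal is not a proof: the one ingredient you flag as new is exactly the step that fails, and repairing it forces you back to the $L^\infty$-on-plates argument of the paper.
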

\begin{proof}
For definiteness take $j=1$. Since we can write $Q_{k+\ell}^{u_1} \circ T^{\phase_1}_{b_1} = Q_{k+\ell}^{u_1} \circ T^{\phase_1}_{b_1} \circ \tilde{Q}_{k+\ell}$ where $\tilde{Q}_{k+\ell}$ maps $\bmo$ into $L^\infty$ uniformly in $k+\ell$, as a first step we consider $f\in L^\infty$.

The operator $Q_{k+\ell}^{u_1} \circ T^{\phase_1}_{b_1}$ is just the $(k+\ell)$-th component of the Seeger-Sogge-Stein decomposition of the Fourier integral operator $T^{\phase_1}_{b_1}$, which we saw in Section \ref{sec:smallballs}. This in turn is split into $O(2^{(k+\ell)(n-1)/2})$ separate operators $T_{k+\ell}^\nu$ ($\nu = 1,2,\dots,c2^{(k+\ell)(n-1)/2}$) with kernels $K_{k+\ell}^\nu(x,y)$ which, as can be seen from \eqref{eq:claimkernel}, satisfy
\nm{ineq:secondkernelest}{
|K_{j}^\nu(x,y)|\leq c2^j\{1 + 2^j|(x + \nabla\phase_1(\xi_j^\nu) - y)_1| + 2^{j/2}|(x + \nabla\phase_1(\xi_j^\nu) - y)'|\}^{-\NNN},
}
for any $\NNN>0$ and all $j\geq 0$. Here we have chosen a coordinate system where $x_1$ is parallel to $\xi_j^\nu$ (which was also defined in Section \ref{sec:smallballs}) and $x'$ denotes the vector of remaining coordinates. For a given ball $B \subset \R^n$ with centre $x_0$ and radius $r\leq 1$ we write $g^\nu_j = f_1\chi_{\RR^\nu_j}$ and $h^\nu_j = f_1\chi_{(\RR^\nu_j)^c}$, with $\RR^\nu_j$ being a rectangle with side-length $2r$ parallel to $\nabla\phase_1(\xi_j^\nu)$, side-length $2r^{1/2}$ in the remaining directions and centre $x_0 + \nabla\phase_1(\xi_j^\nu)$. Clearly then $f_1 = g^\nu_j + h^\nu_j$ and
\m{
(Q_{j}^{u_1} \circ T^{\phase_1}_{b_1})(f_1) = \sum_\nu T_{j}^\nu(f_1) = \sum_\nu T_{j}^\nu(g^\nu_j) + \sum_\nu T_{j}^\nu(h^\nu_j).
}

Since $T^\nu_j$ are multipliers whose $L^2$-norms are bounded by $2^{-j(n-1)/2}$ and whose symbols have almost disjoint support, i.e.~with finitely many overlaps, we have
\ma{
\int_B \left|\sum_\nu T_{j}^\nu(g^\nu_j)(x) \right|^2 \dd x
&\leq \int \left|\sum_\nu T_{j}^\nu(g^\nu_j)(x) \right|^2 \dd x
\lesssim \sum_\nu \int \left|T_{j}^\nu(g^\nu_j)(x) \right|^2 \dd x \\
&\lesssim \sum_\nu 2^{-j(n-1)}\int \left|g^\nu_j(x) \right|^2 \dd x
\lesssim \sum_\nu 2^{-j(n-1)}\int_{R^\nu_j} \left|f_1(x)\right|^2 \dd x  \\
&\lesssim \sum_\nu 2^{-j(n-1)}|\RR^\nu_j| \norm{f_1}_{L^\infty}^2
\lesssim 2^{-j(n-1)/2}r^{-(n-1)/2}|B| \norm{f_1}_{L^\infty}^2.
}
Using \eqref{ineq:secondkernelest}, we also have
\ma{
&|T_{j}^\nu(h^\nu_j)(x)| \\
&\leq \int_{(\RR^\nu_j)^c} \frac{c2^jf_1(y)}{\{1 + 2^j|(x + \nabla\phase_1(\xi_j^\nu) - y)_1| + 2^{j/2}|(x + \nabla\phase_1(\xi_j^\nu) - y)'|\}^{n+1}} \dd y \\
&\leq 2^{-j(n-1)/2}\int_{(\RR^\nu_j)^c} \frac{c2^{j(n+1)/2}f_1(y)}{\{1 + 2^j|(x + \nabla\phase_1(\xi_j^\nu) - y)_1| + 2^{j/2}|(x + \nabla\phase_1(\xi_j^\nu) - y)'|\}^{n+1}} \dd y \\
&\leq 2^{-j(n-1)/2}\norm{f_1}_{L^\infty}\int_{(\RR^\nu_j)^c} \frac{c2^{j(n+1)/2}}{\{1 + 2^j|(x + \nabla\phase_1(\xi_j^\nu) - y)_1| + 2^{j/2}|(x + \nabla\phase_1(\xi_j^\nu) - y)'|\}^{n+1}} \dd y.
}
For $x \in B$ and $y \in (\RR^\nu_j)^c$ we must have that either
\begin{equation}\label{main denominator estim}
    2^j|(x + \nabla\phase_1(\xi_j^\nu) - y)_1| \geq 2^jr \quad \mbox{or} \quad 2^{j/2}|(x + \nabla\phase_1(\xi_j^\nu) - y)'| \geq 2^{j/2}r^{1/2}.
\end{equation}
Moreover, for those $j$ such that $2^{-j} \leq r$, we have that $2^jr \geq 2^{j/2}r^{1/2}$. Thus, for all such $j$, \eqref{main denominator estim} yields that
\ma{
&1 + 2^j|(x + \nabla\phase_1(\xi_j^\nu) - y)_1| + 2^{j/2}|(x + \nabla\phase_1(\xi_j^\nu) - y)'| \\
&\geq \frac{1}{2}(2^{j/2}r^{1/2} + 2^j|(x + \nabla\phase_1(\xi_j^\nu) - y)_1| + 2^{j/2}|(x + \nabla\phase_1(\xi_j^\nu) - y)'|).
}

Therefore
\ma{
&\int_{(\RR^\nu_j)^c} \frac{c2^{j(n+1)/2}}{\{1 + 2^j|(x + \nabla\phase_1(\xi_j^\nu) - y)_1| + 2^{j/2}|(x + \nabla\phase_1(\xi_j^\nu) - y)'|\}^{n+1}} \dd y \\
& \lesssim \int_{(\RR^\nu_j)^c} \frac{c2^{j(n+1)/2}}{\{2^{j/2}r^{1/2} + 2^j|(x + \nabla\phase_1(\xi_j^\nu) - y)_1| + 2^{j/2}|(x + \nabla\phase_1(\xi_j^\nu) - y)'|\}^{n+1}} \dd y \\
& \lesssim \int \frac{c2^{j(n+1)/2}}{\{2^{j/2}r^{1/2} + 2^j|y_1| + 2^{j/2}|y'|\}^{n+1}} \dd y \\
&\leq \int \frac{c2^{jn/2}}{\{2^{j/2}r^{1/2} + 2^{j/2}|y|\}^{n+1}} \dd y \\
&\leq \int \frac{c2^{-j/2}}{\{r^{1/2} + |y|\}^{n+1}} dy \leq \frac{2^{-j/2}}{r^{1/2}}\int \frac{cr^{1/2}}{\{r^{1/2} + |y|\}^{n+1}} \dd y \lesssim \frac{2^{-j/2}}{r^{1/2}}.
}
We conclude that for $x \in B$ and $j$ such that $2^{-j}  \leq r$,
\ma{
|T_{j}^\nu(h^\nu_j)(x)| \lesssim 2^{-j(n-1)/2}\frac{2^{-j/2}}{r^{1/2}}\norm{f_1}_{L^\infty}
}
Combining these estimates enables us to estimate
\ma{
    \int_{B\times [0,r]} |\dd\mu_k(x,t)| &= \sum_{2^{-\ell} \leq r} \int_B |(Q_{k+\ell}^{u_1} \circ T^{\phase_1}_{b_1}\circ \tilde{Q}_{k+\ell})(f_1)|^2 \dd x \\
&\lesssim \sum_{2^{-\ell} \leq r}\left(2^{-k(n-1)/2}2^{-\ell(n-1)/2}r^{-(n-1)/2} + 2^{-k/2}2^{-\ell/2}r^{-1/2}\right)|B| \norm{\tilde{Q}_{k+\ell}(f_1)}_{L^\infty}^2 \\
&\lesssim \left(2^{-k(n-1)/2} + 2^{-k/2}\right)|B| \norm{f_1}_{\bmo}^2 \\
&\lesssim 2^{-k/2}|B| \norm{f_1}_{\bmo}^2
}
for $k \geq 0$ and $n\geq2$. Thus we have even proved $d\mu_k(x,\ell)$ is a Carleson measure with norm at most $2^{-k/2}\norm{f_1}_{\bmo}^2$ provided $n\geq 2$.
\end{proof}

%%%%%%%%%%%%%%%%%%%%%%%%%%%%%%%%%%%%%%%%%%%%%%%%%%%%%%%%

%%%%%%%%%%%%%%%%%%%%%%%%%%%%%%%%%%%%%%%%%%%%%%%%%%%%%%%%
\subsubsection{$\bmo \times \dots \times \bmo \times L^2 \to L^2$} Here we take $n\geq 2,$ $m_j= -(n-1)/2$, $f_j \in \bmo$ 
for $j=1,\ldots, N-1$, $m_N=0$ and $f_N \in L^2$. 

Lemma \ref{lem:Wase-Lin} shows us that
\begin{equation}\label{eq:uniform_bound_2}
     \sup_{k\geq k_0}\norm{(P_{k}^{u_j} \circ T^{\phase_j}_{d_j})(f_j)}_{L^\infty} \lesssim  \norm{f_j}_{\bmo} \quad \mbox{for $j=3,\dots,N$ whenever $p_j = \infty$.}
\end{equation}
Using \eqref{eq:uniform_bound_2}, \eqref{ineq:carll2} and Theorem~\ref{linearhpthm} we can estimate
\ma{
&\brkt{\sum_{k\geq k_0}
\int \abs{(Q_{k}^{u_1} \circ T^{\phase_1}_{d_1})(f_1)\,\prod_{j=3}^N(P_{k}^{u_j} \circ T^{\phase_j}_{d_j})(f_j)}^2\dd x}^{1/2} \\
&\lesssim \norm{f_1}_{\bmo}\prod_{j=3}^{N-1} \norm{f_j}_\bmo \norm{f_N}_{L^2}
}
and combining this with \eqref{ineq:firstfactor} bounds \eqref{eq:lowfreq11}, as required.

To bound \eqref{highfreq11}, we see from Lemma~\ref{lem:smallcarlnorm} and \eqref{eq:uniform_bound_2} that
\[
    \sum_{k\geq k_0} \abs{(Q_{k+\ell}^{u_1} \circ T^{\phase_1}_{d_1})(f_1)\,(Q_{k+\ell}^{u_2} \circ T^{\phase_2}_{d_2})(f_2)\,\prod_{j=3}^{N-1}(P_{k}^{u_j} \circ T^{\phase_j}_{d_j})(f_j)}^2\dd x\, \delta_{2^{-k}}(t)
\]
is a Carleson measure with norm $2^{k/2}\prod_{j=2}^{N-1}\norm{f_j}_\bmo^2$. Therefore, again by \eqref{ineq:carll2} and Theorem~\ref{linearhpthm}, we see that \eqref{highfreq11} is bounded by $2^{k/4}\norm{f_0}_{L^2}\prod_{j=1}^{N-1}\norm{f_j}_\bmo\norm{f_N}_{L^2}$, which again is sufficient for our purposes.
%%%%%%%%%%%%%%%%%%%%%%%%%%%%%%%%%%%%
%%%%%%%%%%%

\subsubsection{$L^2 \times \bmo \times \dots \times \bmo \to L^2$} Here we take $m_1=0$ and $f_1 \in L^2$ and $m_j= -(n-1)/2$, $f_j \in \bmo$ 
for $j=2,\ldots, N$.

Using \eqref{eq:uniform_bound_2}, \eqref{ineq:quadraticestimate} and Theorem~\ref{linearhpthm} we can estimate
\m{
\brkt{\sum_{k\geq k_0}
\int \abs{(Q_{k}^{u_1} \circ T^{\phase_1}_{d_1})(f_1)\,\prod_{j=3}^N(P_{k}^{u_j} \circ T^{\phase_j}_{d_j})(f_j)}^2\dd x}^{1/2}
\lesssim \norm{f_0}_{L^2}\norm{f_1}_{L^2}\prod_{j=3}^N \norm{f_j}_\bmo
}
and combining this with \eqref{ineq:firstfactor} bounds \eqref{eq:lowfreq11}, as required.

To bound \eqref{highfreq11}, we see from Lemma~\ref{lem:smallcarlnorm} and \eqref{eq:uniform_bound_2} that
\[
    \sum_{k\geq k_0} \abs{(Q_{k+\ell}^{u_2} \circ T^{\phase_2}_{d_2})(f_2)\,\prod_{j=3}^N(P_{k}^{u_j} \circ T^{\phase_j}_{d_j})(f_j)}^2\dd x\, \delta_{2^{-k}}(t)
\]
is a Carleson measure with norm $2^{k/2}\prod_{j=2}^N\norm{f_j}_\bmo^2$. Therefore, by \eqref{ineq:carll2} and Theorem~\ref{linearhpthm}, we see that \eqref{highfreq11} is bounded by $2^{k/4}\norm{f_0}_{L^2}\norm{f_1}_{L^2}\prod_{j=2}^N\norm{f_j}_\bmo$, which is sufficient to conclude the proof of this endpoint case.

%%%%%%%%%%%%%%%%%%%%%%%%%%%%%%%%%%%%
%%%%%%%%%%%%%%
\subsection{The endpoint cases with target space $h^1$}

The operator $M_{\mathfrak{m}}$ (which we recall depends on $k$) can be viewed as a pseudodifferential operator and therefore (see \cite{Gol})
\[
    \norm{M_{\mathfrak{m}}(f_0)}_{\bmo} \lesssim \sum_{\abs{\alpha}\lesssim 1} \norm{\partial^\alpha {\mathfrak{m}}}_{L^\infty} \norm{f_0}_\bmo \lesssim \norm{f_0}_\bmo,
\]
with implicit constants independent of $k$. Thus Lemma \ref{lem:Wase-Lin}  yields
\begin{equation}\label{linftyonmk}
 \sup_{k\geq k_0}\norm{P_{k}^0\circ T^{-\phase_0}_{d_0} (M_{\mathfrak m} f_0)}_{L^\infty}\lesssim \norm{f_0}_{\bmo} \lesssim 1.
\end{equation}
Moreover, as a scholium to Lemma \ref{lem:Wase-Lin}, we have that
\begin{equation}\label{eq:uniform_bound_3}
     \sup_{k\geq k_0}\norm{(Q_{k}^{u_j} \circ T^{\phase_j}_{d_j})(f_j)}_{L^\infty} \lesssim  \norm{f_j}_{\bmo} \quad \mbox{for $j=1,2$ when $p_j = \infty$.}
\end{equation}

%%%%%%%%%%%%%%%%%%%%%%%%%%%%%%%%%%%%%%%%%%%%%%%%
\subsubsection{$\bmo \times \dots \times \bmo \times h^1 \to h^1$} Here we take $m_j= -(n-1)/2$ for $j=0, \dots, N$, $f_j \in \bmo$ for $j=0, \dots, N-1,$ and $f_N\in h^1.$

By \eqref{eq:uniform_bound_2} and \eqref{linftyonmk} we see that
\ma{
    \dd\mu(x,t)&:=\sum_{k\geq k_0}P_{k}^0\circ T^{-\phase_0}_{d_0} (M_{\mathfrak m} f_0) \times \\ &\left[(Q_{k}^{u_1} \circ T^{\phase_1}_{d_1})(f_1)\, (Q_{k}^{u_1} \circ T^{\phase_2}_{d_2})(f_2)\,\prod_{j=3}^{N-1}(P_{k}^{u_j} \circ T^{\phase_j}_{d_j})(f_j)\right] \dd x\, \delta_{2^{-k}}(t)
}
defines a Carleson measure with Carleson norm bounded by 
\[
    \norm{f_0}_{\bmo}\prod_{j\neq j_0} \norm{f_j}_{\bmo}
    \norm{f_{1}}_{\bmo}\norm{f_2}_{\bmo}.
\]
So \eqref{eq:Lu-Be} becomes
\[
    \sum_{k\geq k_0} \int  P_{k}^{u_{N}} T_{d_N}^{\phase_N}(f_N ) (x) \dd \mu(x,2^{-k}),
\]
and arguing as in Section \ref{sec:bmoh1h1}, it follows that 
\[
   \abs{\sum_{k\geq k_0} \int  P_{k}^{u_{N}}\circ T_{d_N}^{\phase_N}(f_N)(x) \dd \mu(x,2^{-k})}\lesssim 
  \prod_{j=0}^{N-1} \norm{f_j}_{\bmo}
    \norm{f_N}_{h^1}.
\]

%%%%%%%%%%%%%%%%%%%%%%%%%%%%%%%%%%%%%%%%
%%%%%%%%
\subsubsection{$h^1 \times \bmo \times \dots \times \bmo \to h^1$} Here we take $m_j= -(n-1)/2$ for $j=0, \dots, N$, $f_0\in \bmo$, $f_1\in h^1$ and $f_j\in \bmo$ for $j=2, \dots, N.$

Using the estimates \eqref{eq:uniform_bound_2} and \eqref{linftyonmk} again together with Proposition \ref{cor:monster 1}, Theorem \ref{linearhpthm} and \eqref{defn:bmo} mean we can estimate \eqref{eq:Lu-Be} by
\[
\begin{split}
%    &\left|\sum_{k\geq k_0}  \int P_{k}^0\circ T^{-\phase_0}_{d_0} (M_{\mathfrak m} f_0)\left[(Q_{k}^{u_1} \circ T^{\phase_1}_{d_1})(f_1)\, (Q_{k}^{u_1} \circ T^{\phase_2}_{d_2})(f_2)\,\prod_{j=3}^N(P_{k}^{u_j} \circ T^{\phase_j}_{d_j})(f_j)\right]\dd x\right|\\
%    &\qquad \lesssim
&\norm{f_0}_{\bmo}\prod_{j=3}^N \norm{f_j}_{\bmo}
    \norm{T^{\phase_1}_{d_1}(f_1)}_{H^1}\norm{T^{\phase_2}_{d_2}(f_2)}_{\mathrm{BMO}} \\
    &\leq \norm{f_0}_{\bmo}\prod_{j=3}^N \norm{f_j}_{\bmo}
    \norm{f_1}_{h^1}\norm{f_2}_{\bmo}.
\end{split}
\]
%%%%%%%%%%%%%%%%%%%%%%%%%%%%%%%%%%%%%%%%
%%%%%%%%%%%%
\subsubsection{$L^2 \times L^2 \times \bmo \times \dots \times \bmo \to h^1$}
We choose $m_1=m_2=0$, $f_1,f_2\in L^2$, $m_j=-\frac{n-1}{2}$, $f_j\in\bmo$ for $j=3,\ldots N$. 

Once again, \eqref{eq:uniform_bound_2}, \eqref{linftyonmk} and Theorem \ref{linearhpthm}, this time together with \eqref{ineq:quadraticestimate} mean we can estimate \eqref{eq:Lu-Be} by
\ma{%{eq:sigma11h1}{
&\sum_{k\geq k_0}
\int \abs{(Q_{k}^{u_1} \circ T^{\phase_1}_{d_1})(f_1)\, (Q_{k}^{u_2} \circ T^{\phase_2}_{d_2})(f_2)}\dd x\prod_{j=3}^N \norm{f_j}_{\bmo} \\
&\lesssim \brkt{\sum_{k\geq k_0}
\int \abs{(Q_{k}^{u_1} \circ T^{\phase_1}_{d_1})(f_1)}^2 \dd x}^{1/2}\brkt{\sum_{k\geq k_0}
\int \abs{(Q_{k}^{u_2} \circ T^{\phase_2}_{d_2})(f_2)}^2 \dd x}^{1/2}\prod_{j=3}^N \norm{f_j}_{\bmo} \\
&\lesssim \norm{f_1}_{L^2}\norm{f_2}_{L^2}\prod_{j=3}^N \norm{f_j}_{\bmo},
}
where we have also used the Cauchy-Schwarz inequality and quadratic estimates. 
%%%%%%%%%%%%%%%%%%%%%%%%%%%%%%%%%%%%%%%%%%%%%%%%

%%%%%%%%%%%%%%%%%%%%%%%%%%%%%%%%%%%%%
%%%%%%%%
\subsubsection{$\bmo \times \bmo \times \dots \times \bmo \times L^2 \times L^2 \to h^1$}
We choose $m_j=-\frac{n-1}{2}$ and $f_j\in\bmo$ for $j=1,\ldots, {N-2}$, and $m_j=0$, $f_j\in L^2$ for $j=N,N-1$. 

Via \eqref{eq:uniform_bound_2} and \eqref{linftyonmk}
\m{
\sum_{k\geq k_0} P_{k}^0\circ T^{-\phase_0}_{d_0} (M_{\mathfrak m} f_0)\left[(Q_{k}^{u_1} \circ T^{\phase_1}_{d_1})(f_1)\, (Q_{k}^{u_2} \circ T^{\phase_2}_{d_2})(f_2)\right]\prod_{j=3}^{N-2}(P_{k}^{u_j} \circ T^{\phase_j}_{d_j})(f_j)\delta_{2^{-k}}(t)\dd x
}
can be seen to be a Carleson measure. Since when $m_j = 0$ for $j=N$ or $N-1$, $d_j$ is independent of $k$, \eqref{ineq:carll2} together with Theorem~\ref{linearhpthm} can be used to estimate \eqref{eq:Lu-Be} by
\m{\prod_{j=1}^{N-2} \norm{f_j}_{\bmo} \norm{T^{\phase_{N-1}}_{d_{N-1}}(f_{N-1})}_{L^2}\norm{T^{\phase_{N}}_{d_{N}}(f_{N})}_{L^2} \lesssim \prod_{j=1}^{N-2} \norm{f_j}_{\bmo} \norm{f_{N-1}}_{L^2}\norm{f_N}_{L^2}.
}

%%%%%%%%%%%%%%%%%%%%%%%%%%%%%%%%%%%%%%%%%%%%%%%%
\subsubsection{$L^2 \times \bmo \times \bmo \times \dots \times \bmo \times L^2 \to h^1$}

We choose $m_1=m_N=0$, $f_1,\, f_N\in L^2$, $m_j=-\frac{n-1}{2}$ and $f_j\in\bmo$ for $j=2,\ldots, N-1$.

This time we again first apply \eqref{eq:uniform_bound_2} and \eqref{linftyonmk} to \eqref{eq:Lu-Be} but then the Cauchy-Schwartz inequality to obtain the estimate
\ma{
&\norm{f_0}_{\bmo} \prod_{j=3}^{N-1} \norm{f_j}_{\bmo}  \brkt{\sum_{k\geq k_0} \int \abs{(Q_{k}^{u_{2}} \circ T^{\phase_2}_{d_{2}})(f_{2})(P_{k}^{u_N} \circ T^{\phase_N}_{d_N})(f_N)}^2 \dd x}^{1/2} \times\\
&\qquad \brkt{\sum_{k\geq k_0}
\int \abs{(Q_{k}^{u_{1}} \circ T^{\phase_1}_{d_{1}})(f_{1})}^2 \dd x}^{1/2}
}
Thereafter, \eqref{ineq:carll2}, \eqref{ineq:quadraticestimate} and Theorem~\ref{linearhpthm} lead us to the bound
\m{
\norm{f_0}_{\bmo} \norm{f_1}_{L^2}\prod_{j=2}^{N-1} \norm{f_j}_{\bmo} \norm{f_N}_{L^2}.
}

%%%%%%%%%%%%%%%%%%%%%%%%%%%%%%%%%%%%
%%%%%%%%%%
\subsection{The endpoint case with target space $\bmo$} \label{sec:sigma12bmotarget} Here we take $m_j= -(n-1)/2$, and $f_j \in \bmo$ for $j=1, \dots, N$.

Just as we did in the proof of Lemma \ref{lem:Wase-Lin}, and with the same notation, we write
\nma{eq:commutator}{
P_{k}^0\circ T^{-\phase_0}_{d_0} &= P_{k}^0\circ T^{-\phase_0}_{d^\flat_0} + P_{k}^0\circ T^{-\phase_0}_{d^\sharp_0} \\
&= P_{k}^0\circ T^{-\phase_0}_{d^\flat_0} + \sum_{j=k_0}^k 2^{(k-j)m_0}Q_{j}\circ T^{-\phase_0}_{\gamma}
}
with the help of \eqref{defn: Rk}.

To estimate the term arising from the sum in $j$ in \eqref{eq:commutator} we argue as in Section \ref{sec:carlsmallnorm} and are led to the expression
\m{
\sum_{k=0}^\infty 2^{km_0} \sum_{\ell=k_0}^\infty \int Q_\ell\circ T^{-\phase_0}_{\gamma} (M_k f_0) \, (Q_{k+\ell}^{u_1} \circ T^{\phase_1}_{d_1})(f_1)\, (Q_{k+\ell}^{u_1} \circ T^{\phase_2}_{d_2})(f_2) \,\prod_{j=3}^N(P_{k+\ell}^{u_j} \circ T^{\phase_j}_{d_j})(f_j)\dd x.
}
The sum in $\ell$ can be estimated using \eqref{eq:uniform_bound_2}, the fact (from Lemma \ref{lem:smallcarlnorm}) that
\m{
\sum_{\ell\geq k_0} \abs{(Q_{k+\ell}^{u_1} \circ T^{\phase_1}_{d_1})(f_1)(x)\, (Q_{k+\ell}^{u_1} \circ T^{\phase_2}_{d_2})(f_2)(x)}\dd x\, \delta_{2^{-\ell}}(t)
}
is a Carleson measure with Carleson norm of size $2^{-k/2}\norm{f_1}_\bmo\norm{f_2}_\bmo$, and \eqref{ineq:carlh1}. It is then straight-forward to sum in $k$.

To deal with the first term of the right-hand side of \eqref{eq:commutator} we write
\ma{
P_{k}^0\circ T^{-\phase_0}_{d^\flat_0}\circ M_{\mathfrak m} &=  T^{-\phase_0}_{d^\flat_0}\circ P_{k}^0\circ M_{\mathfrak m} \\
&= T^{-\phase_0}_{d^\flat_0}\circ [P_{k}^0, M_{\mathfrak m}] + T^{-\phase_0}_{d^\flat_0}\circ M_{\mathfrak m}\circ P_{k}^0
}
A fairly standard calculation shows that the kernel of $[P_{k}^0, M_{\mathfrak m}]$ is integrable and of size $2^{-k}$. This combined with the estimate of the kernel of $T^{-\phase_0}_{d^\flat_0}$ from Lemma \ref{main low frequency estim} shows that
\m{
\norm{T^{-\phase_0}_{d^\flat_0}\circ [P_{k}^0, M_{\mathfrak m}](f_0)}_{L^1} \lesssim 2^{-k}\norm{f_0}_{L^1} \lesssim 2^{-k}\norm{f_0}_{h^1}
}
and so, together with \eqref{eq:uniform_bound_2} and \eqref{eq:uniform_bound_3}, this proves
\ma{
&\sum_{k=k_0}^\infty \int T^{-\phase_0}_{d^\flat_0}\circ [P_{k}^0, M_{\mathfrak m}](f_0) \, (Q_{k}^{u_1} \circ T^{\phase_1}_{d_1})(f_1)\, (Q_{k}^{u_1} \circ T^{\phase_2}_{d_2})(f_2) \,\prod_{j=3}^N(P_{k}^{u_j} \circ T^{\phase_j}_{d_j})(f_j)\dd x \\
\quad &\lesssim \norm{f_0}_{h^1}\prod_{j=1}^N\norm{f_j}_{\bmo}.
}

Finally, the term associated with $T^{-\phase_0}_{d^\flat_0}\circ M_{\mathfrak m}\circ P_{k}^0$ can be dealt with by first writing
\nma{theend}{
&\sum_{k=k_0}^\infty \int T^{-\phase_0}_{d^\flat_0} \circ M_{\mathfrak m} \circ P_{k}^0 (f_0) \, (Q_{k}^{u_1} \circ T^{\phase_1}_{d_1})(f_1)\, (Q_{k}^{u_1} \circ T^{\phase_2}_{d_2})(f_2) \,\prod_{j=3}^N(P_{k}^{u_j} \circ T^{\phase_j}_{d_j})(f_j)\dd x \\
\quad &= \sum_{k=k_0}^\infty \int M_{\mathfrak m} \circ P_{k}^0 (f_0) \, T^{\phase_0}_{d^\flat_0}\brkt{(Q_{k}^{u_1} \circ T^{\phase_1}_{d_1})(f_1)\, (Q_{k}^{u_1} \circ T^{\phase_2}_{d_2})(f_2) \,\prod_{j=3}^N(P_{k}^{u_j} \circ T^{\phase_j}_{d_j})(f_j)}\dd x.
}
The kernel estimate of $T^{-\phase_0}_{d^\flat_0}$ from Lemma \ref{main low frequency estim} shows that since
\m{
\sum_{\ell\geq k_0} \abs{(Q_{k}^{u_1} \circ T^{\phase_1}_{d_1})(f_1)\, (Q_{k}^{u_1} \circ T^{\phase_2}_{d_2})(f_2) \,\prod_{j=3}^N(P_{k}^{u_j} \circ T^{\phase_j}_{d_j})(f_j)}\dd x\, \delta_{2^{-\ell}}(t)
}
is a Carleson measure, then even
\m{
\sum_{\ell\geq k_0} \abs{T^{\phase_0}_{d^\flat_0}\brkt{(Q_{k}^{u_1} \circ T^{\phase_1}_{d_1})(f_1)\, (Q_{k}^{u_1} \circ T^{\phase_2}_{d_2})(f_2) \,\prod_{j=3}^N(P_{k}^{u_j} \circ T^{\phase_j}_{d_j})(f_j)}}\dd x\, \delta_{2^{-\ell}}(t)
}
is a Carleson measure. Therefore applying the uniform bound of $\mathfrak m$ and \eqref{ineq:carlh1} in \eqref{theend} completes the proof.

%%%%%%%%%%%%%%%%%%%%%%%%%%%%%%%%%%%%%%%%%%%%%%%%%%%%%%%
%%%%%%%%%%%%%%%%%%%%%%%%%%%%%%%%%%%%%%%%%%%%%%%%%%%%%%%
%%%%%%%%%%%%%%%%%%%%%%%%%%%%%%%%%%%%%%%%%%%%%%%%%%%%%%%
\section{Boundedness results for $T_{\sigma_{0}}^\Phi$}\label{sec:sigma0}
For the case of $T^\Phase_{\sigma_{0}}$ given by
\m{
T^\Phase_{\sigma_{0}} (f_1,\dots,f_N)(x) = \int_{\R^{nN}} \sigma_0 (x,\Xi)\prod_{j=1}^N\left(\widehat{f}_j(\xi_j)e^{ix\cdot\xi_j}\right) e^{i\Phase(\Xi)}\ddd \Xi.
}
 we use a separation of variables technique as follows.
 
Let $Q$ be a closed cube in $\R^{nN}$ of side-length $L$ which compactly contains the $\Xi$-support of $\sigma_0$. We extend $\sigma_0 (x,\Xi)|_{\Xi\in Q}$ periodically in the $\Xi$-variables with period $L$ to $\widetilde{\sigma_0}(x,\Xi)\in \mathcal{C}^{\infty}(\R_x^n\times\R^{nN}_\Xi).$ Let $\zeta\in \mathcal{C}^{\infty}_c(\R^{nN})$ with $\supp \zeta \subset Q$ and $\zeta=1$ on $\Xi$-support of $\sigma_0 (x,\Xi)$, so we have $ \sigma_0 (x,\Xi)=\widetilde{\sigma_0}(x,\Xi)\zeta(\Xi)$. We can then find the Fourier series coefficients of $\widetilde{\sigma}(x,\Xi) $:
\begin{align*}\label{estim ak1}
a_K(x) &= \widehat{\widetilde{\sigma_0}(x,\Xi)}(x, K) = \frac1{L^n}\int_Q e^{-i\frac{2\pi}L \Xi\cdot K}\widetilde{\sigma_0 }(x,\Xi)\dd\Xi \\
&=  \frac1{L^n}\int_{\R^{nN}} e^{-i\frac{2\pi}L \Xi\cdot K}\sigma_0 (x,\Xi)\dd\Xi,
\end{align*}
where $\Xi=(\xi_1, \ldots, \xi_N)\in \R^{nN}$, $K=(k_1, \ldots, k_N)\in \R^{nN}$ and $\Xi\cdot K= \sum_{j=1}^{N}\xi_j \cdot k_j = \sum_{j=1}^{N} \sum_{\ell=1}^{n}\xi^{\ell}_j k^{\ell}_j .$ Also observe that using this notation one has that $|k_j|^2= \sum_{\ell=1}^{n}(k^{\ell}_j)^2$. Integration by parts then yields that
\begin{equation*}\label{estim ak2}
|\partial^{\alpha} a_{K}(x)| = \frac{c_{n,M, L}}{|k_j^{\ell}|^{2M}}\left|\int e^{-i\frac{2\pi}L\Xi\cdot K}\partial_{\xi_{j}^\ell}^{2M} \partial_{x}^{\alpha}\sigma_0 (x,\Xi)\dd\Xi\right|
\end{equation*}
for all multi-indices $\alpha$, any $M \geq 0$, and some constants $c_{n,M,L}$. Therefore, the boundedness of the $\Xi$-support of $\sigma_0 (x,\Xi)$ and the fact that $|\partial^{\alpha}a_{K}(x)|\lesssim 1$ imply that 
\begin{equation}\label{derivative of a_K}
 |\partial^{\alpha}a_K(x)|\lesssim (1+\sum_{j=1}^{N} |k_j|^2)^{-M}   
\end{equation} for all $x\in \R^n$ and $M\geq 0$.

We now choose $\theta \in \mathcal{C}_c^{\infty}(\R^n)$ such that $1= \prod_{j=1}^N \theta(\xi_j)$ for $\Xi = (\xi_1,\dots,\xi_N)$ on the support of $\zeta$. We have then even
\begin{equation*}
    1= \theta((\xi_1+\dots +\xi_N)/\sqrt{N})\prod_{j=1}^N \theta(\xi_j)
\end{equation*}
for $\Xi = (\xi_1,\dots,\xi_N)$ on the support of $\zeta$. Using the Fourier expansion of $\widetilde{\sigma_0}(x,\Xi)$, we can write
\[
T^\Phase_{\sigma_{0}} (f_1,\dots,f_N)(x) = \sum_{K\in\Z^{nN}} a_K(x)T_{\theta(\cdot/\sqrt{N})}^{\phase_0}\left(\prod_{j=1}^N T_{\theta}^{\phase_j}\circ\tau_{\frac{2\pi k_j}{L}}(f_j)\right)(x),
\]
where $\tau_h f(x) := f(x-h)$.

Since we only need consider the endpoint cases of Corollary \ref{cor:endpointcases}, the analysis is confined to the spaces $h^1$, $L^2$ and $\bmo$. Now observe that since $\theta\in\mathcal{C}_c^{\infty}(\R^n)$, Lemma \ref{main low frequency estim}, \eqref{eq:local_H1} and \eqref{defn:bmo} yield that
\begin{equation*}
    \norm{T_{\theta}^{\phase_j}(f)}_{L^p} \lesssim \norm{f}_{X^p} \quad \mbox{and} \quad \norm{T_{\theta(\cdot/\sqrt{N})}^{\phase_0}(f)}_{X^p} \lesssim \norm{f}_{L^p}
\end{equation*}
for $p = 1,2,\infty$. Combining these estimates with the translation invariance of the norms and H\"older's inequality gives 
\[
    \norm{T_{\theta(\cdot/\sqrt{N})}^{\phase_0}\left(\prod_{j=1}^N T_{\theta}^{\phase_j}\circ\tau_{\frac{2\pi k_j}{L}}(f_j)\right)}_{X^{p_0}}\lesssim   \prod_{j=1}^N \norm{f_j}_{X^{p_j}}. 
\]
for all the endpoint cases of $p_0,p_1,\dots,p_N$ in Corollary \ref{cor:endpointcases}. Finally, the boundedness of $T_{\sigma_0}^{\phase_0}$ follows by applying \eqref{derivative of a_K} with the inclusions $\mathcal{C}^1_b\cdot h^1\subseteq h^1$, $L^\infty \cdot L^2 \subseteq L^2$ and $\mathcal{C}^1_b\cdot \bmo \subseteq \bmo$ (see \cite{Gol}).

\begin{bibdiv}
\begin{biblist}

\bib{BernicotGermain1}{article}{
   author={F. Bernicot},
   author={P. Germain},
   title={Bilinear oscillatory integrals and boundedness for new bilinear
   multipliers},
   journal={Adv. Math.},
   volume={225},
   date={2010},
   number={4},
   pages={1739--1785},
}

\bib{BernicotGermain2}{article}{
   author={F. Bernicot},
   author={P. Germain},
   title={Bilinear dispersive estimates via space-time resonances \emph{I}: The
   one-dimensional case},
   journal={Anal. PDE},
   volume={6},
   date={2013},
   number={3},
   pages={687--722},
  }

\bib{BernicotGermain3}{article}{
   author={F. Bernicot},
   author={P. Germain},
   title={Bilinear dispersive estimates via space time resonances,
   dimensions two and three},
   journal={Arch. Ration. Mech. Anal.},
   volume={214},
   date={2014},
   number={2},
   pages={617--669},
}

\bib{Carl}{article}{
   author={L. Carleson},
   title={An interpolation problem for bounded analytic functions},
   journal={Amer. J. Math.},
   volume={80},
   date={1958},
   pages={921--930},
   %issn={0002-9327},
   %review={\MR{117349}},
  % doi={10.2307/2372840},
}

\bib{CM4}{book}{
   author={R. Coifman},
   author={Y. Meyer},
   title={Au del\`a des op\'erateurs pseudo-diff\'erentiels},
   series={Ast\'erisque},
   volume={57},
   publisher={Soci\'et\'e Math\'ematique de France},
   place={Paris},
   date={1978},

}

\bib{CM}{article}{
   author={R. Coifman},
   author={Y. Meyer},
   title={Commutateurs d'int\'{e}grales singuli\`eres et op\'{e}rateurs
   multilin\'{e}aires},
   language={French, with English summary},
   journal={Ann. Inst. Fourier (Grenoble)},
   volume={28},
   date={1978},
   number={3},
   pages={xi, 177--202},
}
\bib{CM5}{article}{
   author={R. Coifman},
   author={Y. Meyer},
   title={Nonlinear harmonic analysis, operator theory and P.D.E},
   conference={
      title={Beijing lectures in harmonic analysis},
      address={Beijing},
      date={1984},
   },
   book={
      series={Ann. of Math. Stud.},
      volume={112},
      publisher={Princeton Univ. Press},
      place={Princeton, NJ},
   },
   date={1986},
   pages={3--45},

}
\bib{DW}{article}{
   author={D. Dos Santos Ferreira},
   author={W. Staubach},
   title={Global and local regularity for Fourier integral operators on weighted and unweighted spaces},
   journal={ Mem. Amer. Math. Soc.},
   volume={229},
   date={2014},
   number={1074},
}
\bib{Feffer}{article}{
   author={C. Fefferman},
   title={A note on spherical summation multipliers},
   journal={Israel J. Math.},
   volume={15},
   date={1973},
   pages={44--52},
 
}
\bib{FS2}{article}{
   author={C. Fefferman},
   author={E. M. Stein},
   title={Some maximal inequalities},
   journal={Amer. J. Math.},
   volume={93},
   date={1971},
   pages={107--115},

}
\bib{FS}{article}{
   author={C. Fefferman},
   author={E. M. Stein},
   title={$H^{p}$ spaces of several variables},
   journal={Acta Math.},
   volume={129},
   date={1972},
   number={3-4},
   pages={137--193},

}

\bib{germmasshat1}{article}{
   author={P. Germain},
   author={N. Masmoudi},
   author={J. Shatah},
   title={Global solutions for $3D$ quadratic Schr\"{o}dinger equations},
   journal={Int. Math. Res. Not. IMRN},
   date={2009},
   number={3},
   pages={414--432},
}

\bib{germmasshat2}{article}{
   author={P. Germain},
   author={N. Masmoudi},
   author={J. Shatah},
   title={Global solutions for the gravity water waves equation in dimension
   $3$},
   language={English, with English and French summaries},
   journal={C. R. Math. Acad. Sci. Paris},
   volume={347},
   date={2009},
   number={15-16},
   pages={897--902},
}

\bib{germmasshat3}{article}{
   author={P. Germain},
   author={N. Masmoudi},
   author={J. Shatah},
   title={Global solutions for 2D quadratic Schr\"{o}dinger equations},
   language={English, with English and French summaries},
   journal={J. Math. Pures Appl. (9)},
   volume={97},
   date={2012},
   number={5},
   pages={505--543},
}

\bib{Gol}{article}{
   author={D. Goldberg},
   title={A local version of real Hardy spaces},
   journal={Duke Math. J.},
   volume={46},
   date={1979},
   number={1},
   pages={27--42},

}

\bib{GM}{article}{
   author={L. Grafakos},
   author={M. Masty\l{}o},
   title={Analytic families of multilinear operators},
   journal={Nonlinear Analysis},
   volume={107},
   date={2014},
   pages={47--62},

}

\bib{Krantz}{article}{
   author={S. G. Krantz},
   title={Fractional integration on Hardy spaces},
   journal={Studia Math.},
   volume={73},
   date={1982},
   number={2},
   pages={87--94},
   
}

\bib{MM}{article}{
   author={M. Mendez},
   author={M. Mitrea},
   title={The Banach envelopes of Besov and Triebel-Lizorkin spaces and applications to partial differential equations},
   journal={J. Fourier Anal. Appl.},
   volume={6},
   date={2000},
   number={5},
   pages={503--531},
   
}

\bib{PS}{article}{
   author={M. M. Peloso},
   author={S. Secco},
   title={Boundedness of Fourier integral operators on Hardy spaces},
   journal={Proc. Edinb. Math. Soc. (2)},
   volume={51},
   date={2008},
   number={2},
   pages={443--463},

}

\bib{Peetre}{article}{
   author={J. Peetre},
   title={On spaces of Triebel-Lizorkin type},
   journal={Ark. Mat.},
   volume={13},
   date={1975},
   pages={123--130},
}
	
\bib{RRS}{article}{
   author={S. Rodr\'{i}guez-L\'{o}pez},
   author={D. Rule},
   author={W. Staubach},
   title={On the boundedness of certain bilinear oscillatory integral
   operators},
   journal={Trans. Amer. Math. Soc.},
   volume={367},
   date={2015},
   number={10},
   pages={6971--6995},
}

\bib{Monster}{article}{
   author={S. Rodr\'{i}guez-L\'{o}pez},
   author={D. Rule},
   author={W. Staubach},
   title={A Seeger-Sogge-Stein theorem for bilinear Fourier integral
   operators},
   journal={Adv. Math.},
   volume={264},
   date={2014},
   pages={1--54},
   issn={0001-8708},
}
	
\bib{IRS}{article}{
   author={A. Israelsson},
   author={S. Rodr\'{i}guez-L\'{o}pez},
   author={W. Staubach},
   title={Local and global estimates for hyperbolic equations in Besov-Lipschitz and Triebel-Lizorkin spaces},
   note={Preprint},
   date={2019}
}

 \bib{RuzhSug}{article}{
 title ={A local-to-global boundedness argument and Fourier integral operators},
 journal = {Journal of Mathematical Analysis and Applications},
 volume = {473},
 number = {2},
 pages = {892 - 904},
 year = {2019},

 author={M. Ruzhansky},
 author={M. Sugimoto},
 }

\bib{SSS}{article}{
   author={A. Seeger},
   author={C. D. Sogge},
   author={E. M. Stein},
   title={Regularity properties of Fourier integral operators},
   journal={Ann. of Math. (2)},
   volume={134},
   date={1991},
   number={2},
   pages={231--251},

}

\bib{S}{book}{
   author={E. M. Stein},
   title={Harmonic analysis: real-variable methods, orthogonality, and
   oscillatory integrals},
   series={Princeton Mathematical Series},
   volume={43},
   note={With the assistance of Timothy S. Murphy;
   Monographs in Harmonic Analysis, III},
   publisher={Princeton University Press},
   place={Princeton, NJ},
   date={1993},

}
\bib{SW}{book}{
   author={E. M. Stein},
   author={G. Weiss},
   title={Introduction to Fourier analysis on Euclidean spaces},
   note={Princeton Mathematical Series, No. 32},
   publisher={Princeton University Press},
   place={Princeton, N.J.},
   date={1971},

}

\bib{Triebel}{book}{
   author={H. Triebel},
   title={Theory of function spaces},
   series={Monographs in Mathematics},
   volume={78},
   publisher={Birkh\"auser Verlag},
   place={Basel},
   date={1983},

}

\end{biblist}
\end{bibdiv}
\end{document}